\newcommand{\ignore}[1]{\relax}
\numberwithin{equation}{section}
\newcommand{\R}{\mathbb R}
\newcommand{\Z}{\mathbb Z}
\newcommand{\N}{\mathbb N}
\newcommand{\Q}{\mathbb Q}
\newcommand{\T}{\mathbb T}
\newcommand{\U}{\mathcal U}
\newcommand{\F}{\mathcal F}
\newcommand{\W}{\mathcal W}
\newcommand{\s}{\mathcal E}
\newcommand{\ostar}{\operatorname{St}}
\newcommand{\Sk}{\operatorname{Sk}}
\newcommand{\Aut}{\operatorname{Aut}}
\newcommand{\GL}{\operatorname{GL}}
\newtheorem{lem}{Lemma}[section]
\newtheorem{theorem}[lem]{Theorem}
\newtheorem{lemma}[lem]{Lemma}
\newtheorem{corollary}[lem]{Corollary}
\newtheorem{conjecture}[lem]{Conjecture}
\newtheorem{proposition}[lem]{Proposition}
\theoremstyle{definition}
\newtheorem{condition}{Condition}
\newtheorem{definition}[lem]{Definition}
\newtheorem{example}[lem]{Example}
\theoremstyle{remark}
\newtheorem{remark}[lem]{Remark}
\newtheorem*{acknowledgement}{Acknowledgements}
\newcommand{\dd}{\partial}
\newcommand{\tp}{{\mathbb T}{\mathbb P}}
\newcommand{\Vol}{\operatorname{Vol}}
\renewcommand{\setminus}{\smallsetminus}
\newcommand{\OO}{\mathcal O}
\newcommand{\Hom}{\operatorname{Hom}}
\newcommand{\<}{\langle}   
\renewcommand{\>}{\rangle} 
\newcommand{\Link}{\operatorname{Link}}
\begin{document}

\title{Tropical eigenwave and intermediate Jacobians}
\author{Grigory Mikhalkin}
\address{Universit\'e de Gen\`eve,  Math\'ematiques, Villa Battelle, 1227 Carouge, Suisse}
\email{grigory.mikhalkin@unige.ch}
\author{Ilia Zharkov}
\address{Kansas State University, 138 Cardwell Hall, Manhattan, KS 66506 USA}
\email{zharkov@math.ksu.edu}

\begin{abstract}
Tropical manifolds are polyhedral complexes enhanced with certain kind of affine structure.
This structure manifests itself through a particular cohomology class which we call the eigenwave
of a tropical manifold. Other wave classes of similar type are responsible for deformations of the
tropical structure. 

If a tropical manifold is approximable by a 1-parametric family of complex manifolds then the
eigenwave records the monodromy of the family around the tropical limit. With the help of tropical
homology and the eigenwave we define tropical intermediate Jacobians which can be viewed
as tropical analogs of classical intermediate Jacobians.
\end{abstract}
\thanks{
Research is supported in part
by the NSF FRG grant DMS-0854989 (G.M. and I.Z.),
the TROPGEO project of the European Research Council (G.M.) and
the Swiss National Science Foundation  grants 140666 and 141329 (G.M.).
}

\maketitle

\section{Tropical spaces and tropical manifolds}
In this section we briefly recall basic concepts of tropical spaces relevant for our paper.  For more details we refer to \cite{Mik06} and \cite{MR}.
The main assumption we make is that our the tropical space is regular at infinity.

\subsection{Tropical spaces}
A tropical affine $n$-space $\T^n$ is the topological space $[-\infty,\infty)^n$ (homeomorphic to the
$n$th power of a half-open interval) enhanced with a collection of functions $\OO_{\operatorname{pre}}=\{f\}$, $f:U\to\T=[-\infty,\infty)$. 
Here $U\subset\T^n$ is an open set and $f$ is a function that can be expressed as
\begin{equation}\label{fx}
f(x)=\max_{j\in A} (jx + a_j)
\end{equation}
for a finite set $A\subset\Z^n$ and a collection of numbers $a_j\in\T$,
such that the scalar product $jx$ is well-defined as a number in $\T$ (i.e. is finite or $-\infty$) for any $x\in U$.

The collection of functions $\OO_{\operatorname{pre}}$ is a presheaf which gives rise to a sheaf $\OO$ of {\em regular functions} on $\T^n$ (which we will also denote $\OO_{\T^n}$
indicating the space where it is defined to avoid ambiguity). $\OO$ is called the {\em structure sheaf} on $\T^n$.

It is convenient to stratify the space $\T^n$ by 
$$\T^\circ_I:= \{y\in \T^n \ :\ y_i=-\infty, i\in I \ \text{ and } \ y_i>-\infty, i\notin I \},
$$ where $I\subset \{1,\dots,n\}$.
Each $T^\circ_I$ is isomorphic to $\R^{n-|I|}$ and we set $\T_I$ to be its closure in $\T^n$.

To write down a regular function \eqref{fx} on $\R^n$ all we need
is the {\em integral affine structure} on $\R^n$. This allows us to distinguish functions $\R^n\to\R$ which are affine with linear parts defined over $\Z$. 
Thus the tropical structure on $\T^n$ can be thought of as an extension
of the integral affine structure in $\R^n$ where the overlapping maps are compositions of linear transformations in $\R^n$ defined over $\Z$ with arbitrary translations in $\R^n$.

Given a subset $U\subset\T^N$  we say that
a {\em continuous} map $U\to\T^M$ is {\em integral affine}
if it restricts to an affine map $\R^N\to \R^M$ with integral linear part.
We say that a partially defined map
$h:\T^{N}  \dashrightarrow \T^{M}$ is {integral affine}
if it is defined on a subset $U\supset\R^N$ and is integral affine there.
Extending $h$ whenever we can by continuity we see that for each $I\subset\{1,\dots,N\}$
$h$ is defined everywhere or nowhere on $\T^\circ_I$.

The automorphisms of a subset $U\subset\T^N$ are invertible integral affine maps $U\to U$.
For example, the automorphisms $\Aut (\R^N) \cong \GL_N (\Z) \ltimes \R^N$
 form a group of all integral affine transformations of $\R^N$
while $\Aut (\T^{N})\cong \R^N$ only consists of translations.
We also note that automorphisms of  $\T^{s} \times \R^{N-s}$
translate an $s$-dimensional affine subspace of $\R^N$ parallel to the $\T^{s}$ factor
to another one with the same property.

A {\em convex polyhedral domain} $D$ in $\T^{N}$ is defined as the intersection
of a finite collection of half-spaces $H_k$ of the form
\begin{equation}\label{H_k}
H_k=\{x\in\T^N\ |\ jx\le a\}\subset\T^{N}
\end{equation}
for some $j\in\Z^N$ and $a\in\R$. 
The boundary $\dd H_k$ is given by the equation $jx=a$. 
A {\em mobile face} $E$ of $D$ is the intersection
of $D$ with the boundaries of some of its defining half-spaces given by \eqref{H_k}.
The adjective {\em mobile} stands here to distinguish such faces among
more general faces of $X$ which we will define later and 
which are allowed to have support in $\T^N\setminus\R^N$, i.e. be disjoint from
$\R^N\subset \T^{N}$. (They have reduced mobility and are called {\em sedentary}).

The {\em dimension} of a convex polyhedral domain $D$ is its topological dimension. 
Observe that for each mobile face $E$ of $D$ the intersection 
$$E^\circ=E\cap\R^N$$
is non-empty. The intersection $E^\circ$ is called the non-infinite part of a mobile face.
Each mobile face of $D$ is a convex polyhedral domain itself (although perhaps of smaller dimension).

We say two domains $D\subset \T^{N}$ and $D'\subset \T^{M}$ are isomorphic if there is an integral affine map $\T^{N} \dashrightarrow \T^{M}$ which restricts to a homeomorphism $D\to D'$
(in particular, it has to be defined everywhere on $D$).

We say that a convex polyhedral domain $D\subset \T^{N}$ is {\em  regular at infinity}
if for every $I\subset \{1,\dots, N\}$ the intersection $D\cap (\T^{\circ}_I)$ is either empty or is
a $(\dim D-|I|)$-dimensional  polyhedral domain in $\T^\circ_I\cong \R^{N-|I|}$. 

\begin{definition}\label{def:poly_complex}
An $n$-dimensional polyhedral complex $Y=\bigcup D\subset \T^{N}$
is the union of a finite collection of convex $n$-dimensional polyhedral domains $D$, called
the {\em facets} of $Y$ subject to the following property.
For any collection $\{D_j\}$ of facets, their intersection $\bigcap D_j$ is a face of each $D_j$.
Such intersections are called the (mobile) faces of $Y$. Clearly they are themselves polyhedral domains in $\T^{N}$.

We say that $Y$ is regular at infinity if all its faces are regular at infinity.
\end{definition}

In this paper we assume that all polyhedral complexes are regular at infinity.

\begin{condition}[Balancing]
Let $E$ be an $(n-1)$-dimensional mobile face in $Y$
and $D_1,\dots,D_l\subset \T^{N}$ be the facets adjacent to $E$. 
Take the quotient of $\R^{N}$ by the linear subspace parallel to $E^\circ$,
the non-infinite part of $E$.
The balancing condition requires that
\begin{equation}\label{bal-cond}
\sum\limits_{k=1}^l\epsilon_k=0,
\end{equation}
where the $\epsilon_k$
are the outward primitive integer vectors parallel to the images of $D_k$ in this quotient.
\end{condition}

A polyhedral complex $Y\subset\T^N$ is called {\em balanced} if all of its $(n-1)$-dimensional faces satisfy the balancing condition.

More generally we can consider  spaces that locally look like balanced polyhedral
complexes, i.e. admit a covering by open sets
$U_\alpha$ enhanced with open embeddings (charts)
$$\phi_\alpha:U_\alpha\to Y_\alpha\subset\T^{N_\alpha}
$$
where each $Y_\alpha\subset\T^{N_\alpha}$ is a balanced polyhedral complex.
In this paper we assume in addition that each $Y_\alpha$ is regular at infinity.

We may express compatibility of different charts by requiring that the corresponding overlapping maps are induced by integral affine maps $\T^{N_\alpha}\dashrightarrow\T^{N_\beta}$.  Or, equivalently, we may use the structure sheaf and enhance each
$Y_\alpha\subset\T^{N_\alpha}$ with the sheaf $\OO_{Y_\alpha}$ induced from $\OO_{\T^{N_\alpha}}$.
Its pull-back under $\phi_\alpha$ is a sheaf on $U_\alpha$. Two charts $\phi_\alpha$ and
$\phi_\beta$ are compatible if the corresponding restrictions to $U_\alpha\cap U_\beta$
agree.

We arrive to the following definition of a tropical space.
\begin{definition}[cf. \cite{MR}]\label{def-tropspace}
A tropical space 
is a topological space $X$ enhanced with a cover of compatible charts
$\phi_\alpha:U_\alpha\to  Y_{\alpha}\subset\T^{N_\alpha}$
to balanced polyhedral complexes as above and which satisfies the finite type condition below.

The tropical space $X$ is regular at infinity if it admits charts to polyhedral complexes regular at infinity.
\end{definition}
The  charts induce a sheaf $\OO_X$ on $X$ which we call the structure sheaf of $X$.

\begin{condition}[Finite type] The number of charts $\phi_\alpha$ covering $X$ is finite
while each chart is subject to the following property.
If $\{x_j\in U_\alpha\}_{j=1}^\infty$ is a sequence such that $\phi_\alpha(x_j)$ converges
to a point $y\in\T^{N_\alpha}$
then either the sequence $\{x_j\}$ converges inside the topological space $X$ 
or there exists a coordinate in $\T^{N_\alpha}$ such that its value on $y$ is $-\infty$ while
its value on any point in $\phi_\alpha(U_\alpha)$ is finite.
\end{condition}

It is easy to see that this finite type condition is a reformulation of the one from \cite{MR}.

\subsection{Sedentary points and faces}
Let $D \subset \T^{N}$ be a polyhedral domain.
It is convenient  to treat 
the intersections $D \cap\T_I$ for $I\subset\{1,\dots,N\}$
also as its faces (at infinity). If we need to distinguish such faces from the mobile ones we have defined
before we call these new faces {\em sedentary}.
\begin{definition}\label{def:sedentarity}
We say that
$$E_I:=E \cap\T_I$$ is a face of $D$ if $E$ is a mobile face of $D$.
The {\em sedentarity} of the face $E_I$ is $s=|I|$,
while its {\em refined sedentarity} is $I$. 
\end{definition}

Clearly, the {mobile} faces (defined previously) are the faces of sedentarity $0$.
If $Y\subset \T^{M}$ is a polyhedral complex then we define a (possibly sedentary)
face of $Y$ as a face of a facet in $Y$. 

We will use the notation $F\prec^s_j E$ when $F$ is a face of $E$ of codimension $j$ and sedentarity $s$ higher. It is also convenient to introduce the following terminology.
\begin{definition}
A face $E$ of $Y$ is called {\em infinite}
if either it is not compact or it contains a higher sedentary subface.
Otherwise $E$ is called finite (even if the sedentarity of $E$ itself is positive).
\end{definition}

Note that even though a  face $F\subset Y$ of sedentarity $I$
may be adjacent to several facets, it is always presented as
$$F=E\cap\T^I$$
for a unique mobile face $E\subset Y$ which we call the {\em parent} of $F$
(as long as $Y$ is regular at infinity). The set of faces of $Y$ with the same parent $E$ is called the {\em family} of $E$. In case $E$ is compact the regularity at infinity forces its family to have a very simple combinatorial structure.

\begin{proposition}\label{prop:family}
Let $E\subset Y$ be a compact mobile face containing a face of a maximal sedentarity $s$. Then its family $\Pi(E)$ forms a lattice poset (under $\prec_j^j$), isomorphic to the face poset of a simplicial cone of dimension $s$. The maximal sedentary face in the poset is finite. 
\end{proposition}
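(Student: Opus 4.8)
The plan is to work in a chart containing $E$ and to reduce everything to the recession cone of its non-infinite part. Write $E^\circ = E\cap\R^N$, a convex polyhedron, and let $C\subseteq\R^N$ be its recession cone. The entire content of the statement is the claim that regularity at infinity forces $C$ to be a coordinate orthant $\R^{I_0}_{\le 0}$ with $|I_0|=s$; once this is known the rest is bookkeeping. First I would record which strata $E$ reaches: a sequence in $E^\circ$ converges in $\T^N$ to a point of $\T^\circ_I$ exactly when its $I$-coordinates tend to $-\infty$ and the rest stay bounded. Since $\T=[-\infty,\infty)$ omits $+\infty$, compactness of $E$ forces every recession direction to be non-positive, i.e. $C\subseteq\R^N_{\le 0}$.

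For such a cone every $v\in C$ has $v_i=0$ whenever $v_i\not<0$, so with $\Supp(v)=\{i:v_i<0\}$ one checks directly that $E\cap\T^\circ_I\neq\emptyset$ if and only if $C$ contains a vector $v$ with $\Supp(v)=I$, and that $\Supp(v+w)=\Supp(v)\cup\Supp(w)$. Hence the set of realized sedentarities $\mathcal{S}=\{\Supp(v):v\in C\}$ is closed under union, with a unique maximal element equal to the support of a generic vector of $C$.

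The heart of the argument is to pin down $C$ using the regularity of the one-dimensional faces, not merely of $E$. Because $C\subseteq\R^N_{\le 0}$ is pointed, $E^\circ$ has vertices and each extreme ray of $C$ is the direction of an unbounded edge $R$ of $E^\circ$; its closure $\bar R$ is a one-dimensional face of $Y$, hence regular at infinity by hypothesis. If the primitive generator $w\le 0$ of such a ray had $|\Supp(w)|=k$, then $\bar R$ would meet $\T^\circ_{\Supp(w)}$ in a single point, forcing $0=\dim\bar R-k=1-k$. Thus $k=1$ and every extreme ray of $C$ is a negative unit vector $-e_i$. Consequently $C=\R^{I_0}_{\le 0}$ with $I_0=\{i:-e_i\in C\}$, a coordinate orthant, i.e. a simplicial cone of dimension $\dim C=|I_0|$; since its generic support $I_0$ realizes the largest sedentarity occurring, $s=|I_0|$.

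Finally I would translate back. By the support description $E\cap\T^\circ_I\neq\emptyset$ iff $I\in\mathcal{S}=2^{I_0}$, so $\Pi(E)=\{E_I:I\subseteq I_0\}$, and $E_I\prec_j^j E_J$ with $j=|I|-|J|$ precisely when $I\supseteq J$. The assignment $E_I\mapsto I_0\smallsetminus I$ is then a poset isomorphism from $\Pi(E)$ onto the Boolean lattice $2^{I_0}$, which is exactly the face poset of an $s$-dimensional simplicial cone. Its minimal element $E_{I_0}$ has sedentarity $s$; being closed in the compact set $E$ it is compact, and since $I_0$ is maximal it has no face of higher sedentarity, so it is finite. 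The one genuine obstacle is the middle step, passing from the abstract dimension condition to a coordinate orthant, and the point to emphasize is that it uses regularity of the edge faces of $Y$: a diagonal recession ray such as $\R_{\ge 0}(-1,-1)$ would leave $E$ itself looking regular while making such an edge violate regularity, and it is precisely this that the hypothesis rules out.
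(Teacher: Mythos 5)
The paper states this proposition without proof, so there is no argument of the authors' to measure yours against; I can only assess the proposal on its own terms. Your strategy is sound, and its main merit is that it isolates exactly where the hypothesis enters: regularity at infinity of $E$ alone is \emph{not} enough (your $\R_{\ge 0}(-1,-1)$ remark is a genuine counterexample to that weaker statement, since such an $E$ is regular but its family has only two elements), and it is regularity of the one-dimensional faces of $Y$ contained in $E$ that forces every extreme ray of the recession cone $C$ of $E^\circ$ to be a coordinate ray $\R_{\ge 0}(-e_i)$, whence $C=\R^{I_0}_{\le 0}$, $s=|I_0|$, and the Boolean-lattice description of $\Pi(E)$. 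The steps ``compactness gives $C\subseteq\R^N_{\le 0}$,'' ``$C$ is pointed, so its extreme rays are recession directions of unbounded edges of $E^\circ$, whose closures are faces of $Y$,'' and the concluding bookkeeping (including finiteness of $E_{I_0}$) are all correct.

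Two points need shoring up. First, the step you dispose of with ``one checks directly'' --- that $E\cap\T^\circ_I\neq\emptyset$ \emph{implies} $C$ contains $v$ with $\Supp(v)=I$ --- is not direct where you assert it: a sequence $x_n\in E^\circ$ tending to a point of $\T^\circ_I$ yields, after normalization, only recession vectors with support \emph{contained} in $I$ (coordinates in $I$ diverging at slower rates die in the limit), so producing a vector of support exactly $I$ requires a real argument, e.g.\ writing $E^\circ=Q+C$ by Minkowski--Weyl and analyzing which extreme-ray coefficients are unbounded. The clean repair is to reorder: establish $C=\R^{I_0}_{\le 0}$ first (this uses only the easy ``if'' direction), after which the implication is immediate, since every coordinate outside $I_0$ is bounded on $E^\circ=Q+\R^{I_0}_{\le 0}$ and hence no sequence in $E^\circ$ can reach $\T^\circ_I$ unless $I\subseteq I_0$. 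Second, your bookkeeping silently identifies the strata that $E$ meets with the strata reached by limits from $E^\circ$, i.e.\ it uses $E=\overline{E^\circ}$; this density is part of the paper's implicit conventions on polyhedral domains (it is also what makes the parent construction meaningful), but since your entire argument runs through the recession cone of $E^\circ$, it should be stated explicitly. Neither issue invalidates the approach; both are repairable inside it.
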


Note also that a face $F$ of sedentarity $I$ completely determines the integral affine structure of its parent face $E$ in the neighborhood of $\T_I$.
Namely, we have the following proposition.

\begin{proposition}
 Let $\pi_I:\T^N\to\T^I$ be the projection
taking a point $(x_1,\dots,x_N)$  to the point whose $j$-th coordinate is $x_j$ if  $j\notin I$
and $-\infty$ otherwise.
The parent face $E$ of $F$ is contained in $\pi_I^{-1}(F)$. Furthermore, for a small
open neighborhood $U\supset \T_I$ we have $$E\cap U= \pi_I^{-1}(F)\cap U.$$
\end{proposition}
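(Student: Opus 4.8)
The plan is to transfer the statement to the convex geometry of the non-infinite part $E^\circ=E\cap\R^N$, the only real issue being how the half-spaces defining $E$ degenerate as the coordinates indexed by $I$ go to $-\infty$. Write $E$ as a finite intersection of half-spaces $\langle u_k,x\rangle\le c_k$ with $u_k\in\Z^N$ and $c_k\in\R$, and use the identification $\T^\circ_I\cong\R^{N-|I|}$ under which $\pi_I|_{\R^N}$ becomes the coordinate projection $x\mapsto(x_j)_{j\notin I}$; thus for $x\in\R^N$ one has $\pi_I(x)\in F$ if and only if $(x_j)_{j\notin I}\in F^\circ:=F\cap\T^\circ_I$. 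The key observation is a sign condition: because $F=E\cap\T_I$ is non-empty, every defining inequality has $(u_k)_i\ge 0$ for all $i\in I$. Indeed a point of $F$ has $x_i=-\infty$ for $i\in I$, so a coefficient $(u_k)_i<0$ would make the term $(u_k)_ix_i$ equal to $+\infty$ and render $\langle u_k,x\rangle$ equal to $+\infty$ or undefined there, contradicting $\langle u_k,x\rangle\le c_k$. In particular $-e_i\in\operatorname{rec}(E^\circ)$ for every $i\in I$, where $\operatorname{rec}$ denotes the recession cone.

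I would first establish the inclusion $E\subseteq\pi_I^{-1}(F)$. Given $x\in E^\circ$, the sign condition lets me push all the coordinates $x_i$, $i\in I$, to $-\infty$ while remaining in $E^\circ$; the limit keeps $(x_j)_{j\notin I}$ fixed, has $x_i=-\infty$ for $i\in I$, belongs to $E$ since $E$ is closed, and lies in $\T^\circ_I$; thus it lies in $E\cap\T^\circ_I=F^\circ$. Hence $(x_j)_{j\notin I}\in F^\circ$, i.e. $\pi_I(x)\in F$. A point of $E$ of positive sedentarity is treated the same way, pushing to $-\infty$ the coordinates of $I$ that are still finite (using the sign condition on the relevant deeper face of $E$) and passing to the limit.

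For the local equality I would separate the defining inequalities into those with $(u_k)_i=0$ for all $i\in I$, which involve only the coordinates outside $I$, and those with $(u_k)_i>0$ for some $i\in I$. By the computation above the first family cuts out exactly $F^\circ$ inside $\R^{N-|I|}$, while the left-hand side of any inequality of the second family tends to $-\infty$ as the $I$-coordinates go to $-\infty$. Taking $U$ to be a sufficiently small open neighborhood of $\T_I$ on which every inequality of the second family is strictly satisfied, one finds that on $U$ the set $E$ is cut out by the first family alone, so that $E\cap U=\pi_I^{-1}(F)\cap U$; the agreement on the deeper sedentary strata met by $U$ then follows by continuity, with Proposition~\ref{prop:family} describing the combinatorics of the family of $E$ when $E$ is compact.

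The step I expect to be the main obstacle is precisely the construction of this neighborhood, i.e. the honest verification of the cylinder (product) structure of $E$ over $F$ near $\T_I$. One must check that the inequalities of the second family become inactive on one and the same open neighborhood of all of $\T_I$ --- including the corners of $\T_I$ sitting in the deeper strata $\T_{I'}$, $I'\supsetneq I$, whether or not they belong to $F$ --- and that no point of $\pi_I^{-1}(F)$ survives near a corner of $\T_I$ which $E$ does not reach. This is exactly where regularity at infinity is indispensable: it forbids the coupled inequalities (of the type $x_i-x_j\le 0$ with $i\in I$) that would otherwise spoil the product structure at the corners, and controlling them is genuinely more delicate than the essentially one-line sign argument driving the rest of the proof.
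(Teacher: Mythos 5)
The paper itself gives no proof of this proposition: it is stated as an immediate consequence of regularity at infinity, so your sketch has to stand on its own rather than be measured against an argument of the authors. Its first half does stand: the sign condition (every defining half-space $\langle u_k,x\rangle\le c_k$ of $E$ must have $(u_k)_i\ge 0$ for all $i\in I$, because $F\neq\emptyset$) and the resulting inclusion $E\subseteq\pi_I^{-1}(F)$ are correct, and they indeed need no regularity hypothesis. One small repair: a polyhedral domain in this paper's sense need not be closed in $\T^N$ (already the single half-space $\{x_2-x_1\le 0\}\subset\T^2$ fails to contain the corner $(-\infty,-\infty)$), so ``the limit belongs to $E$ since $E$ is closed'' should be replaced by the direct remark that each form $\langle u_k,\cdot\rangle$ is non-increasing along your coordinate push, which is exactly what the sign condition provides.

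The genuine gap is the step you yourself defer: you never prove that the inequalities of your second family become inactive on one common neighborhood of all of $\T_I$, and the claim you substitute for a proof --- that regularity at infinity ``forbids the coupled inequalities'' --- is false under the paper's literal definition. Take $N=2$, $I=\{2\}$, and the balanced complex $Y=A\cup B$ with $A=\{x\in\T^2: x_2-x_1\le 0\}$ and $B=\{x\in\T^2: x_1-x_2\le 0\}$, meeting along the diagonal. Every mobile face of $Y$ meets each open stratum $\T^\circ_{I'}$ either emptily or in a polyhedral domain of the expected dimension ($A\cap\T^\circ_{\{2\}}\cong\R$ is $1$-dimensional, all other intersections are empty), so $Y$ is regular at infinity exactly as defined; yet for $F=A\cap\T_{\{2\}}=\R\times\{-\infty\}$ one has $\pi_I^{-1}(F)=\R\times\T$, and the points $(-M,-M+1)\in\pi_I^{-1}(F)\setminus A$ converge to the corner $(-\infty,-\infty)\in\T_{\{2\}}$, hence lie in every neighborhood $U\supset\T_I$. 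So no such $U$ satisfies $E\cap U=\pi_I^{-1}(F)\cap U$, and your final step cannot be carried out from the stated hypothesis. What rescues the proposition is a stronger reading of regularity at infinity, under which a sedentary face, viewed inside its stratum $\T_{I'}\cong\T^{N-|I'|}$ (corners included, not just the open part $\R^{N-|I'|}$), must itself be a polyhedral domain of the correct dimension; in the example $\R\subset\T$ is not one, so $Y$ is excluded. Under that reading your Family-A/Family-B division is the right opening move, but the behavior at the corner strata $\T_{I'}$, $I'\supsetneq I$, still requires an actual argument (say, induction on the sedentarity of the corners, using that $F$ is itself regular at infinity in $\T_I$), and that argument is the entire mathematical content of the proposition --- flagging it as ``the main obstacle'' leaves the proof incomplete precisely where it matters.
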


In other words for a sufficiently small $\epsilon>-\infty$
we have $(x_1,\dots,x_N)\in E$ whenever $\pi_I(x_1,\dots,x_N)\in F$
and $x_j<\epsilon$ for any $j\in I$.
Thus the directions parallel to the $j$-th coordinate in $\T^N$ for $j\in I$ are quite special for $E$. 
We orient them toward the $-\infty$-value of the coordinate and
call them {\em divisorial directions},  see Figure \ref{mobile}.
Their positive linear combinations span the {\em divisorial cone} while all
linear combination span the {\em divisorial subspace} in $\R^N$. 
The primitive integral vector along a divisorial direction
(pointing towards $-\infty$ as the direction itself) is called a {\em divisorial vector}.
\begin{figure}
\centering
\includegraphics[height=45mm]{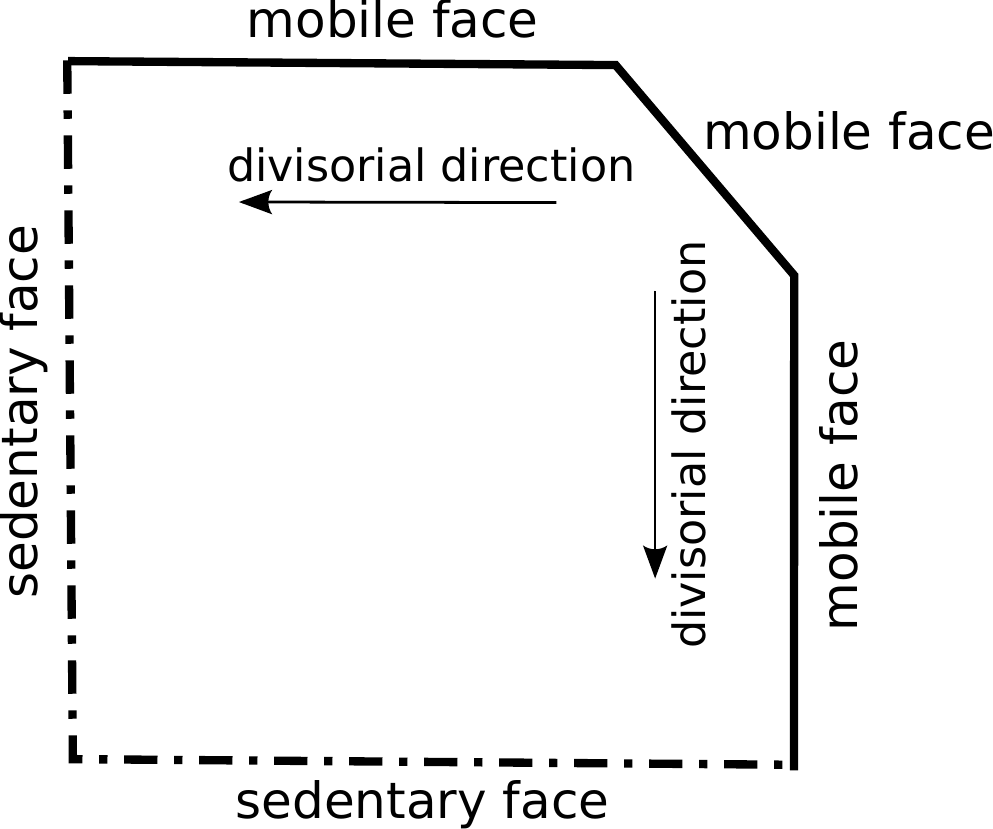}
\caption{Mobile and sedentary faces of a polyhedral domain in $\T^N$.}
\label{mobile}
\end{figure}

One important observation is that the divisorial vectors are invariant with respect to any integral affine automorphism of $\T^{|I|}\times\T_I^\circ$. Thus they are intrinsically defined for $F$
and so is the divisorial subspace which we denote by $W^{div}$. 

\subsection{Tangent spaces}\label{sec:tangent}
Let $y$ be a point in the relative interior of a face $F$ of sedentarity $I$ in a balanced polyhedral complex $Y\subset\T^N$. Let $\Sigma(y)$ be the cone in $\T^\circ_I\cong\R^{N-|I|}$ consisting of vectors $u\in\T^\circ_I$ such that $y+\epsilon u\in Y\cap \T^\circ_I$ for a sufficiently small $\epsilon>0$ (depending on $u$). We denote the intersection of all maximal linear subspaces contained in $\Sigma(y)$ by~$W'(y)$.

Clearly, the cones $\Sigma(y)$ can be canonically identified for all points $y$ in the relative interior of the same face, and so can be the vector spaces $W'(y)$.
We say that $y_m\in Y$ is a {\em nearby mobile point} to $y$ if $y_m$ belongs to the relative interior of the parent face to $F$.
\begin{definition}
For a point $y\in Y$ we define $W(y)$, the {\em wave tangent space} at $y$, as $W'(y_m)$ for a nearby mobile point $y_m$. The (conventional) tangent space $T(y)$ at $y$ is defined as the linear span of $\Sigma(y)$ in $\T^\circ_I\cong\R^{N-|I|}$, where $I$ is the refined sedentarity of $y$.
\end{definition}

Note that there are two essential distinctions in defining  $T(y)$ and $W(y)$.
To define $W(y)$ we always move to a nearby mobile point $y_m$. The space $W'(y)$ itself, is naturally a quotient of $W(y)$ by the divisorial subspace $W^{div}(y)$.

On the other hand, for $T(y)$ we work in a vector space $\T^\circ_I$, which is naturally the quotient $\R^N/W^{div}(y)$,
but we take the linear span of the cone instead of the vector space contained in it.

If we need to specify the space $Y$ for the tangent space $T(y)$ we write $T_Y(y)$, and similarly for $W(y)$.
The following proposition is straightforward.
\begin{proposition}\label{prop:differentials}
An integral affine map $h:\T^N\dashrightarrow\T^M$ induces linear maps
$dh^W: W_Y(y)\to W_{h(Y)}(h(y))$ and $dh^T: T_Y(y)\to T_{h(Y)}(h(y))$ whenever $h$ is well-defined on~$y$.
We call these maps differentials of $h$.

The differentials are natural in the following sense. If $g :\T^M\dashrightarrow\T^L$ is another integral affine map defined on $h(y)$, then the induced differentials satisfy $d(g\circ h)=(dg)\circ (dh)$.
\end{proposition}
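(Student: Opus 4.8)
The plan is to build both differentials from the linear part of $h$ and to deduce everything from a single local fact about tangent cones. By definition $h$ is affine with integral linear part on $\R^N$, and since it is well-defined on $y$ it is defined on the whole stratum $\T^\circ_I$ through $y$, where $I$ is the refined sedentarity of $y$; there it restricts to an integral affine map into a stratum $\T^\circ_J\subset\T^M$ with some integral linear part $L_I\colon\R^{N-|I|}\to\R^{M-|J|}$, while on mobile points it is the affine map on $\R^N$ with linear part $L\colon\R^N\to\R^M$. These linear parts are the candidate differentials. The one local fact I need is that at any point $z$ on which $h$ is defined the affine formula $h(z+\epsilon u)=h(z)+\epsilon\,\ell(u)$ (with $\ell$ the relevant linear part) holds for small $\epsilon$, so that $h$ carries the germ of $Y$ at $z$ onto the germ of $h(Y)$ at $h(z)$; consequently the tangent cone is transported, $\ell(\Sigma_Y(z))\subseteq\Sigma_{h(Y)}(h(z))$, with equality once $h(Y)$ is read as the image germ at $h(z)$.

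For the conventional tangent space I apply this at $y$ inside $\T^\circ_I$. Since $L_I(\Sigma_Y(y))\subseteq\Sigma_{h(Y)}(h(y))$ and $L_I$ is linear, taking linear spans gives $L_I(T_Y(y))=\operatorname{span}\,L_I(\Sigma_Y(y))\subseteq\operatorname{span}\,\Sigma_{h(Y)}(h(y))=T_{h(Y)}(h(y))$, so I may set $dh^T:=L_I|_{T_Y(y)}$.

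For the wave tangent space I first pass to a nearby mobile point $y_m$ in the relative interior of the parent face of $F$, so that $W_Y(y)=W'(y_m)$ and $h$ near $y_m$ is the affine map with linear part $L$, giving $\Sigma_{h(Y)}(h(y_m))=L(\Sigma_Y(y_m))$. The delicate point is that $W'$ is the intersection of the maximal linear subspaces of the local fan, and linear images of maximal subspaces need not stay maximal. I would remove this difficulty by identifying $W'(z)$, at a mobile point $z$, with the translation lineality of the fan, namely the largest subspace $\Lambda$ with $\Sigma(z)+\Lambda=\Sigma(z)$: the inclusion $\Lambda\subseteq W'(z)$ is immediate, since for any maximal linear subspace $V\subseteq\Sigma(z)$ the subspace $V+\Lambda$ again lies in $\Sigma(z)+\Lambda=\Sigma(z)$ and contains $V$, whence $V+\Lambda=V$ and $\Lambda\subseteq V$; the reverse inclusion is a routine check for the fans that arise as local models of a polyhedral complex. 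Granting this description, the claim is immediate from $\Sigma_{h(Y)}(h(y_m))=L(\Sigma_Y(y_m))$: if $\Sigma_Y(y_m)+\R w=\Sigma_Y(y_m)$ then $\Sigma_{h(Y)}(h(y_m))+\R L(w)=L\bigl(\Sigma_Y(y_m)+\R w\bigr)=\Sigma_{h(Y)}(h(y_m))$, so $L(w)$ lies in the translation lineality downstairs and $dh^W:=L|_{W_Y(y)}$ is well-defined. It remains to check that $h(y_m)$ is itself a nearby mobile point to $h(y)$, equivalently that $h$ sends the parent face of $F$ into the parent face of the image face; this follows from the preservation of divisorial directions noted above, which shows that $h$ respects the stratification by sedentarity and the parent and family structure. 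I expect this compatibility with the choice of mobile point, together with the translation lineality description of $W'$, to be the only genuine obstacle; the rest is formal.

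Naturality is the chain rule for linear parts. If $g\colon\T^M\dashrightarrow\T^L$ is defined on $h(y)$ then $g\circ h$ is integral affine and defined on $y$, and the linear part of a composition of affine maps is the composition of the linear parts; restricting this identity to $T_Y(y)$ and to $W_Y(y)$ yields $d(g\circ h)^T=dg^T\circ dh^T$ and $d(g\circ h)^W=dg^W\circ dh^W$, that is $d(g\circ h)=(dg)\circ(dh)$.
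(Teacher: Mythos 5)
Your construction of $dh^T$ (inclusion of tangent cones plus linear span) and your chain-rule argument for naturality are fine, but the $W$-part of your proof rests on two claims that are false in the generality you are working in. First, the identification of $W'(z)$ with the translation lineality $\Lambda=\{w:\Sigma(z)+\R w=\Sigma(z)\}$: your proof of $\Lambda\subseteq W'(z)$ is correct, but the reverse inclusion, which you defer as a ``routine check'' and which your argument genuinely uses, fails for balanced fans. Take $Y\subset\R^2\subset\T^2$ to be the fan of five rays in the primitive directions $(-1,0),(0,-1),(1,1),(1,-1),(-1,1)$: it is balanced, the unique maximal linear subspace contained in it is $\operatorname{span}(1,-1)$, so $W'(o)=\operatorname{span}(1,-1)$, while the lineality is $\{0\}$ (e.g.\ $(-2,0)+(1,-1)=(-1,-1)\notin Y$). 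So your chain $w\in W'\Rightarrow w\in\Lambda\Rightarrow L(w)\in\Lambda\bigl(L(\Sigma)\bigr)\Rightarrow L(w)\in W'(\text{image})$ breaks at the first arrow. Second, the equality $\Sigma_{h(Y)}(h(y_m))=L(\Sigma_Y(y_m))$ holds only for the image germ, not for the complex $h(Y)$ named in the statement: distant parts of $Y$ may land at $h(y_m)$, and then only ``$\subseteq$'' survives; this is fatal because lineality, unlike linear span (which is why your $T$ case is safe), is not monotone under inclusion of cones. In fact for non-injective $h$ the assertion itself fails: let $Y\subset\T^3$ be the two disjoint lines $\{(t,0,0)\}$ and $\{(0,s,1)\}$ (a balanced complex) and $h:\T^3\to\T^2$ the projection forgetting $x_3$; then $h(Y)$ is a pair of crossing lines, and at $y=(0,0,0)$ one has $W_Y(y)=\R\,(1,0,0)$ and $W_{h(Y)}(h(y))=\{0\}$, yet $L(1,0,0)=(1,0)\neq 0$. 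So no argument can make the linear part of an arbitrary integral affine $h$ induce a map $W_Y(y)\to W_{h(Y)}(h(y))$; the same example also defeats your claim that $h(y_m)$ is always a nearby mobile point of $h(y)$.

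The missing idea is the hypothesis the paper is silently using when it declares the proposition straightforward (it gives no proof at all): in the only place the proposition is applied --- the corollary about identifying tangent spaces across charts --- $h$ is an overlap map of compatible charts, hence restricts to an isomorphism from the germ of $Y$ at $y$ onto the germ of $h(Y)$ at $h(y)$ whose inverse is again integral affine. Under that local invertibility everything is transported at once: tangent cones, their linear spans, the maximal linear subspaces and their intersection, parent faces and nearby mobile points; both differentials are then simply the restriction of the linear part, no lineality description of $W'$ is needed, and naturality is the chain rule exactly as in your last paragraph. Your proof should either assume this local invertibility explicitly, or replace the target $W_{h(Y)}(h(y))$ by the space computed from the image germ; as written, both the key lemma and the key equality feeding it are false.
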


Let $x\in X$ be now a point in a tropical space.
\begin{corollary}
The tangent spaces $W_{Y_\alpha}(\phi_\alpha(x))$ (resp. $T_{Y_\alpha}(\phi_\alpha(x))$)
for different charts  $\phi_\alpha$ are 
identified by the differentials of the overlapping maps.
The resulting spaces $W(x)$ and $T(x)$ are called the wave tangent space and
the (conventional) tangent space to the tropical space $X$ at its point $x$.
\end{corollary}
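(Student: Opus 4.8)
The plan is to reduce everything to Proposition \ref{prop:differentials} applied to the integral affine maps that furnish the overlaps of the charts. Fix a point $x$ lying in an overlap $U_\alpha\cap U_\beta$ and write $y_\alpha=\phi_\alpha(x)$, $y_\beta=\phi_\beta(x)$. By the compatibility built into Definition \ref{def-tropspace}, the transition homeomorphism $\phi_\beta\circ\phi_\alpha^{-1}$ is induced by an integral affine map $h_{\alpha\beta}:\T^{N_\alpha}\dashrightarrow\T^{N_\beta}$ that is defined on $y_\alpha$ and satisfies $h_{\alpha\beta}(y_\alpha)=y_\beta$. First I would invoke Proposition \ref{prop:differentials} to produce the differentials $dh_{\alpha\beta}^W:W_{Y_\alpha}(y_\alpha)\to W_{Y_\beta}(y_\beta)$ and $dh_{\alpha\beta}^T:T_{Y_\alpha}(y_\alpha)\to T_{Y_\beta}(y_\beta)$; these are the candidate identifications.

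Next I would verify that each candidate is in fact a linear isomorphism. The inverse transition map is induced by an integral affine map $h_{\beta\alpha}$ with $h_{\beta\alpha}\circ h_{\alpha\beta}=\id$ on a neighborhood of $y_\alpha$ in $\R^{N_\alpha}$ and $h_{\alpha\beta}\circ h_{\beta\alpha}=\id$ near $y_\beta$. Since an integral affine map is pinned down by its affine restriction to $\R^{N}$, and two affine maps agreeing on an open set agree on all of $\R^{N}$, these compositions equal the identity \emph{as integral affine maps}. The naturality clause of Proposition \ref{prop:differentials}, together with the evident fact that the identity map induces the identity differential, then gives $dh_{\beta\alpha}^W\circ dh_{\alpha\beta}^W=\id$ and the analogous relation in the other order, and likewise for the $T$-flavour. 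Hence $dh_{\alpha\beta}^W$ and $dh_{\alpha\beta}^T$ are invertible, with inverses $dh_{\beta\alpha}^W$ and $dh_{\beta\alpha}^T$.

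It remains to check that these isomorphisms are mutually consistent, so that the spaces glue into well-defined $W(x)$ and $T(x)$. For three charts meeting at $x$ the transition maps satisfy the cocycle relation $h_{\beta\gamma}\circ h_{\alpha\beta}=h_{\alpha\gamma}$ on the triple overlap; as above this is an equality of integral affine maps, so naturality yields $dh_{\beta\gamma}\circ dh_{\alpha\beta}=dh_{\alpha\gamma}$ (and $dh_{\alpha\alpha}=\id$) for both the $W$- and the $T$-differentials. Consequently the system $\{W_{Y_\alpha}(\phi_\alpha(x))\}_\alpha$, equipped with the transition differentials, is a compatible directed system of vector spaces possessing a canonical colimit, which we name $W(x)$; the identical argument produces $T(x)$.

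The genuinely load-bearing step is the promotion of identities among the transition homeomorphisms (which a priori hold only on an overlap) to honest identities of integral affine maps. This rests entirely on the fact, recorded in the discussion of integral affine maps preceding Definition \ref{def:poly_complex}, that such a map $\T^N\dashrightarrow\T^M$ is determined by its affine restriction to $\R^N$ and then extended by continuity. Granting this, every relation among transition maps holding on $\R^{N}$ is converted by Proposition \ref{prop:differentials} into the corresponding relation among differentials; no further analysis of the wave or conventional tangent cones is needed, since all the delicacy of passing to a nearby mobile point has already been absorbed into that proposition.
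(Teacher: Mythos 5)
Your overall strategy (apply Proposition \ref{prop:differentials} to the transition maps, then use functoriality to obtain invertibility and the cocycle condition for the differentials) is the right one and is what the paper implicitly intends, but the step you yourself single out as load-bearing is false. The identity $h_{\beta\alpha}\circ h_{\alpha\beta}=\id$ and the cocycle relation $h_{\beta\gamma}\circ h_{\alpha\beta}=h_{\alpha\gamma}$ hold only on $\phi_\alpha(U_\alpha\cap U_\beta)$ (resp.\ on the triple overlap), and this set is open in the polyhedral complex $Y_\alpha$, \emph{not} in $\T^{N_\alpha}$: in general $\dim Y_\alpha=n<N_\alpha$, so the overlap is nowhere dense in $\R^{N_\alpha}$. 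The fact you invoke --- that an integral affine map $\T^{N}\dashrightarrow\T^{M}$ is determined by its restriction to $\R^{N}$ --- is true but irrelevant, because the transition homeomorphisms only pin the ambient maps down on a lower-dimensional subset of $\R^{N_\alpha}$. Concretely, if $Y_\alpha=Y_\beta$ is the $x_1$-axis in $\T^{2}$ and the transition homeomorphism is the identity, then $h_{\alpha\beta}$ could be $(x_1,x_2)\mapsto(x_1,2x_2)$ and $h_{\beta\alpha}$ could be $(x_1,x_2)\mapsto(x_1,3x_2)$; both are integral affine, both induce the identity on the overlap, yet $h_{\beta\alpha}\circ h_{\alpha\beta}\neq\id$ as maps of $\T^{2}$. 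For the same reason the ambient extensions are not unique, so no cocycle identity among them can be assumed.

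The repair is to shift the rigidity claim from the ambient maps to their differentials: $dh^T$ and $dh^W$ at $y_\alpha=\phi_\alpha(x)$ depend only on the restriction of $h$ to a neighborhood of $y_\alpha$ in $Y_\alpha$. Indeed, $T_{Y_\alpha}(y_\alpha)$ is spanned by the cone $\Sigma(y_\alpha)$ of vectors $u$ with $y_\alpha+\epsilon u\in Y_\alpha$ for small $\epsilon>0$, and for such $u$ the value $dh(u)$ is computed from $h(y_\alpha+\epsilon u)-h(y_\alpha)$, i.e.\ from values of $h$ on $Y_\alpha$ alone; the same applies to $W$ after passing to a nearby mobile point, which can be taken arbitrarily close to $y_\alpha$ in its parent face. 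Hence two ambient integral affine maps inducing the same overlap homeomorphism have equal differentials on $T_{Y_\alpha}(y_\alpha)$ and $W_{Y_\alpha}(y_\alpha)$; in particular $d(h_{\beta\alpha}\circ h_{\alpha\beta})=\id$ on these spaces and $dh_{\beta\gamma}\circ dh_{\alpha\beta}=dh_{\alpha\gamma}$. With this substitution the rest of your argument (invertibility plus the cocycle condition for the differentials, hence a consistent identification defining $W(x)$ and $T(x)$) goes through, and this is all the paper needs, since it states the corollary as an immediate consequence of Proposition \ref{prop:differentials}.
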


The tangent spaces $T(x)$ and $W(x)$ carry natural integral structure.
We denote the corresponding lattices by $T_\Z(x)$ and $W_\Z(x)$.

\subsection{Polyhedral structures}\label{sec:polyhedral}
Sometimes a tropical space $X$ comes with a structure of an (abstract) polyhedral
complex, which is not always the case.
\begin{definition}
We say that a tropical space $X$ is {\em polyhedral}
if there are finitely many closed subsets $\Delta_j\in X$ (called {\em facets}) with the following properties.
\begin{itemize}
\item For each $\Delta_j$ there exists a chart such that $\Delta_j\subset U_\alpha$ and
$\phi_\alpha(\Delta_j)$ is a facet of the balanced polyhedral complex $Y_\alpha\subset\T^{N_\alpha}$.
\item 
For any collection $\{\Delta_j\}$ of facets of $X$ and any face $\Delta_j$ in this collection
the intersection $\bigcap \Delta_j$ is a face of $\Delta_j$.
\end{itemize}
\end{definition}

Note that we may work with tropical polyhedral spaces in the same way as we work with 
balanced polyhedral complexes in $\T^N$. In particular, we can define in the same way their faces (which will denote by $\Delta$), both mobile and sedentary, parent faces with their families, divisorial directions, and any other notion which is intrinsically defined, that is stable under allowed integral affine maps. For instance, Proposition \ref{prop:family} will read:

\begin{proposition}\label{prop:subface}
Let $X$ be a compact polyhedral tropical space. 
For every face $\Delta$ of sedentarity $s$ there is a unique (parent) face $\Delta_0$ of sedentarity 0 such that $\Delta\prec^s_s\Delta_0$. The cells of $X$ with the same $\Delta_0$, the family of $\Delta_0$, form a lattice poset $\Pi(\Delta_0)$ isomorphic to the face poset of a simplicial cone. 
Every face of $X$ belongs to exactly one family poset $\Pi$. The maximal sedentary face $\Delta_{\min}$ in a poset is finite. 
\end{proposition}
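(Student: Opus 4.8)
The plan is to reduce the statement to its local counterpart, Proposition \ref{prop:family}, by passing to a chart; the reduction is legitimate precisely because every notion entering the statement --- sedentarity, the relation $\prec^s_s$, the parent of a face, and the combinatorial type of a family --- is \emph{intrinsic}, i.e.\ unchanged by the integral affine overlapping maps.

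First I would record this intrinsicness. By Proposition \ref{prop:differentials} the differentials of the overlapping maps are linear isomorphisms that compose functorially, and the divisorial subspace is invariant under every integral affine automorphism of $\T^{|I|}\times\T^\circ_I$, as noted above. Consequently a face of $X$ has a well-defined sedentarity and refined sedentarity independent of the chart, the relation $F\prec^s_s E$ is chart-independent, and the parent of a sedentary face is unambiguous. This is exactly the principle, stated in the paragraph preceding the proposition, that polyhedral tropical spaces may be handled like balanced polyhedral complexes in $\T^N$.

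Next I would establish existence and uniqueness of the parent together with the partition into families. Given a face $\Delta$ of sedentarity $s$, pick a facet $\Delta_j$ of $X$ containing $\Delta$ as a subface and a chart $\phi_\alpha$ with $\Delta_j\subset U_\alpha$ and $\phi_\alpha(\Delta_j)$ a facet of the regular-at-infinity complex $Y_\alpha$. Then $\phi_\alpha(\Delta)$ is a sedentary face of $Y_\alpha$, so by the uniqueness of parents for complexes regular at infinity it equals $E\cap\T^I$ for a unique mobile face $E$; setting $\Delta_0:=\phi_\alpha^{-1}(E)$ produces a sedentarity-$0$ face with $\Delta\prec^s_s\Delta_0$. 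Uniqueness follows because any sedentarity-$0$ face of $X$ admitting $\Delta$ as a subface maps in such a chart to a mobile parent of $\phi_\alpha(\Delta)$, which is unique there, and compatibility of charts forces agreement in $X$. Since each face thus has exactly one parent of sedentarity $0$, the families $\Pi(\Delta_0)$ cover the faces of $X$ disjointly, which is the third assertion.

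Finally I would invoke Proposition \ref{prop:family}. As $\Delta_0$ is closed in the compact space $X$ it is compact, and $\phi_\alpha$ restricts to a homeomorphism, so $E=\phi_\alpha(\Delta_0)$ is a compact mobile face of $Y_\alpha$. Crucially, one chart already sees the whole family: the facet $\phi_\alpha(\Delta_j)$ is regular at infinity and hence contains all faces at infinity of $E$, while the family of $\Delta_0$ depends on $\Delta_0$ alone (by the description of the parent through the projection $\pi_I$), so no sedentary member of $\Pi(\Delta_0)$ escapes $U_\alpha$. Thus $\phi_\alpha$ carries $\Pi(\Delta_0)$ isomorphically onto $\Pi(E)$, and Proposition \ref{prop:family} identifies the latter with the face poset of a simplicial cone of dimension $s$ and provides that its deepest sedentary face $\Delta_{\min}$ is finite. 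The main obstacle I anticipate is exactly this last bookkeeping: verifying that a single chart through any one facet containing $\Delta_0$ captures the entire family (so that Proposition \ref{prop:family} applies verbatim rather than chart by chart) and that the resulting simplicial-cone identification is independent of the chosen chart. Both points rest on the intrinsicness of the divisorial data and on regularity at infinity, but formulating them precisely is the delicate step.
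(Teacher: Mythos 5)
Your proposal is correct and takes essentially the same route as the paper: the paper offers no separate proof of Proposition \ref{prop:subface}, presenting it simply as the chart-wise transfer of Proposition \ref{prop:family} justified by the intrinsicness (under integral affine overlap maps) of sedentarity, parents, families and divisorial data, which is exactly the reduction you carry out. Your write-up actually supplies more detail than the paper does --- notably the uniqueness of the parent across charts and the verification that a single chart through a facet containing $\Delta_0$ sees the entire family.
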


We will denote the $k$-skeleton of  a polyhedral tropical space X (that is the union of $(\le k)$-dimensional faces) by $\Sk_k(X)$. It is often convenient to take the covering $\{U_\alpha\}$ by open stars of vertices. That is, each $U_v$ is the union of relative interiors of faces of $X$ adjacent to the vertex $v$. Then the relative interior of a face $\Delta$ is contained in every $U_v$ if $v$ is a vertex of $\Delta$.

Another useful feature of a compact polyhedral tropical space is that we can define its first baricentric subdivision. For a finite cell we take an arbitrary point in its interior for its baricenter.
\begin{figure}
  \centering
\includegraphics[height=30mm]{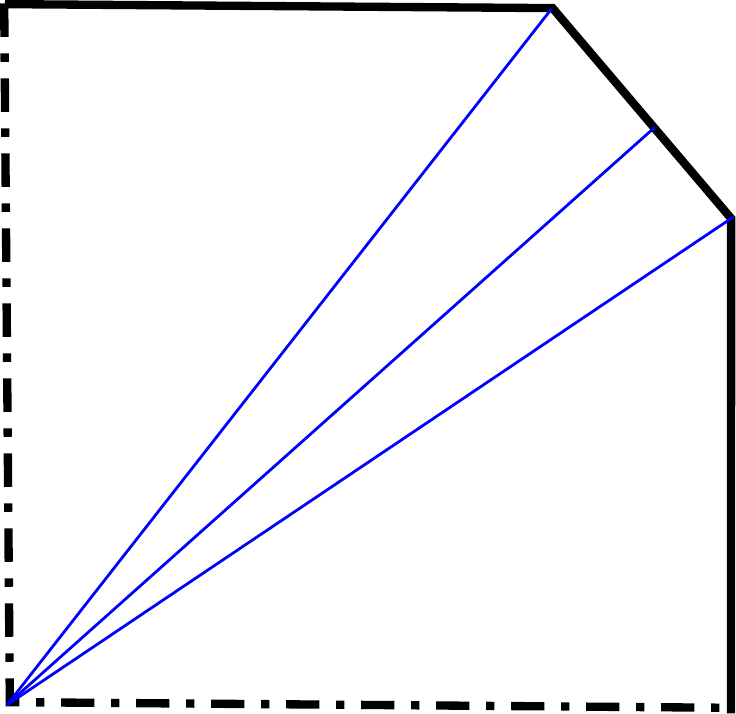}
\caption{\label{fig:bar} Baricentric subdivision of an infinite cell. The dotted faces have higher sedentarity.}
\end{figure}
For an infinite cell we take for its baricenter the baricenter of its unique most sedentary (necessarily finite, cf. Proposition \ref{prop:subface}) subface (see Figure \ref{fig:bar}).
That is, we first choose baricenters of maximal sedentary faces and then name them also as baricenters of any adjacent faces of lower sedentarity. The subdivision of each face of $X$ into simplices is constructed as usual by the flags of its subfaces of {\em minimal} sedentarity.

The baricentric subdivision of $X$ is {\em not} a polyhedral tropical space as we defined it. It violates the regularity at infinity property. Nevertheless, it is very convenient to have a triangulation of $X$. This enables us to define simplicial versions of the (co)homology theories which are very useful for carrying out explicit calculations.

\subsection{Combinatorial stratification}\label{sec:stratification}
Notice that a polyhedral structure on a tropical space (if it exists) is in no way unique.
In this subsection we define a combinatorial stratification which is not always polyhedral,
but is naturally defined on any tropical space $X$.

\begin{definition}
We say that two points $x,x'\in X$ are {\em combinatorially equivalent} 
if there exists a path connecting $x$ to $x'$ along which both 
the dimension of the wave tangent space $W$ and the sedentarity remain constant.
{\em A combinatorial stratum} of the tropical space $X$ is a class of combinatorial equivalence.
\end{definition}

We will denote combinatorial strata of $X$ by $\s$ and use the notation $\s\prec \s'$ if the stratum $\s$ lies on the boundary of $\s'$. 

\begin{example}\label{eg:elliptic} 
Consider the circle $E_l$ of length $l$,  otherwise called a {\em tropical elliptic curve}.
$E_l$ is a tropical space: we can present it as a tropical polyhedral space by
choosing, e.g., three distinct points so that they split $E_l$ into three facets.
This subdivision is not unique as we can move these points around or consider a subdivision into a larger number of facets.
The combinatorial stratification for $E_l$ is trivial: it consists of a single stratum $E_l$. 
\end{example}

Let two points $x,y \in U_\alpha\subset X$ belong to one chart $\phi: U_\alpha \to Y$ of $X$  and they sit  in some strata $x\in \s_x$ and $y\in \s_y$. If $\s_x=\s_y$, that is if they belong to the same stratum, one can canonically identify the tangent spaces $T (y) = T (x)$ and $W(x) = W(y)$. The identification is natural in the following sense. If the points also belong to another common covering open subset $U_\beta$ it commutes with the differentials induced by the overlapping map. 

In other words, we get flat connections on the bundles $T$ and $W$ over each combinatorial stratum of $X$.

Furthermore, if $\s_x\prec \s_y$ then one has two natural maps
\begin{equation}\label{eq:strata_maps}
\iota: T_\Z (y) \to T_\Z (x)  \text{ and } \pi :  W_\Z (x) \to W_\Z(y),
\end{equation}
(note the different directions) defined as follows. 
If $I(\phi(y))=I(\phi(x))$ then any face adjacent to $\phi(x)$ is contained in some face adjacent to $\phi(y)$ and $\iota$ is given by inclusion. If $I(\phi(y))\neq I(\phi(x))$ (note that we must
have $I(\phi(y))\subset I(\phi(x))$) then $\iota$ is the projection along the divisorial directions indexed by $I(\phi(x))\setminus I(\phi(y))$. The map $\pi$ is given by inclusion of the linear spaces spanned by the corresponding parent faces.

Again the maps $\iota$ and $\pi$  are natural in the sense that they commute with the overlapping differentials.

\subsection{Tropical manifolds}
First we recall a construction of a balanced polyhedral fan associated to a matroid
(\cite{AK}, see also e.g. \cite{Shaw}, \cite{MR}).

A matroid $M=(M,r)$ is a finite set $M$ together with a rank function $r:2^M\to\Z_{\ge0}$ such that
we have the inequalities
$r(A\cup B)+r(A\cap B) \le r(A) + r(B)$ and $r(A)\le |A|$,
where $|A|$ is the number of elements in $A$,
for any subsets $A,B\subset M$
as well as the inequality $r(A)\le r(B)$ whenever $A\subset B$.
Subsets $F\subset M$ such that $r(A)>r(F)$ for any $A\supset F$ are called {\em flats}
of $M$ of rank $r(F)$.
Matroid $M$ is  {\em loopless} if $r(A)=0$ implies $A=\emptyset$.

The so-called {\em Bergman fan} of a loopless matroid $M$ is a polyhedral fan $\Sigma_M\subset \R^{|M|-1}$ constructed as follows. Choose $|M|$ integer vectors $e_j\subset\Z^{|M|-1}\subset\R^{|M|-1}$, $j\in M$
such that $\sum\limits_{j\in M} e_j=0$ and any $|M|-1$ of these vectors form a basis of $\Z^{|M|-1}$.
To any flat $F\subset M$ we associate a vector 
$$e_F:=\sum\limits_{j\in F} e_j\in\R^{|M|-1}.
$$
E.g, $e_M=e_\emptyset=0$, but $e_F\neq 0$ for any other (proper) flat $F$.
To any flag of flats $F_{i_1}\subset\dots\subset F_{i_k}$ 
we associate a 
convex cone generated by $e_{F_{i_j}}$. We define $\Sigma_M$ to be the union of such cones, which is, clearly, an $(r(M)-1)$-dimensional integral simplicial fan. It is easy to check (cf. \cite{AK}) that it satisfies
the balancing condition, so that $\Sigma_M$ is a tropical space,
called the {\em Bergman fan} of $M$.

The matroid $M$ is called {\em uniform} if $r(A)=|A|$ for any $A\subset M$. Note that the Bergman
fan of a uniform matroid is a complete unimodular fan in $\R^{|M|-1}$ with $|M|$ maximal cones.

\begin{definition}
A tropical space $X$ is called {\em smooth}, or a {\em tropical manifold},
if all its charts $\phi_\alpha$ 
are open embeddings to
$Y_\alpha = \Sigma _M \times \T^{s}\subset\T^{|M|-1}\times\T^s$
for some loopless matroid $M$ and a number $s\ge 0$. (Here $s$ is the maximal sedentarity in this chart
and $n=r(M)-1+s$ is the dimension of our tropical manifold $X$.)
\end{definition}

Tropical manifolds can be thought of as tropical spaces without points of multiplicity greater than 1,
see \cite{MR}, thus we use the term {\em smooth}. Note that smoothness is a property of the tropical space $(X,\OO_X)$ alone, it does not involve presentation of $X$ as a polyhedral complex.


\section{Homology groups}
 
\subsection{Singular tropical homology}
Let $x\in X$ be a point in a tropical space. 
Choose a sufficiently small open set $U\ni x$ and an embedding $\phi :U \to Y \subset \T^N$. Then for points $y$ such that $\phi(y)$ lies in an adjacent face to $\phi(x)$ we have a natural map between lattices in the tangent spaces $\iota: T_\Z (y) \to T_\Z(x)$, cf. \eqref{eq:strata_maps}.

\begin{definition}\label{def-fk}
The group $\F_k(x)$ is defined as the subgroup of the $k$th exterior power $\Lambda^k(T_\Z(x))$ generated
by the products $\iota(v_1)\wedge\dots\wedge \iota(v_k)$ with
$v_1,\dots,v_k\in T_\Z(y)$ for a point $y$ such that $\phi(y)$ lies in an adjacent face to $\phi(x)$ of the same sedentarity. It is important that all $k$ elements $v_j$ come from a single adjacent face.
The group $\F^k(x)$ is defined as $\Hom (\F_k(x),\Z)$.
\end{definition}

The discussion at the end of Section \ref{sec:stratification} tells us that the groups $\F_k(x)$ and $\F_k(y)$ are canonically identified if $x$ and $y$ belong a single chart $U_\alpha$ and lie in a single stratum $\s$ of $X$. 
Furthermore, if for two points $x,y$, still in the same chart, we have $\s_x\succ\s_y$, then there are
natural homomorphisms
\begin{equation}\label{cosheafmap}
\iota:\F_k(x)\to\F_k(y).
\end{equation}

If three points $x,y,z\in U$ lie in the strata with incidence $\s_x\succ \s_y \succ \s_z$ then the three corresponding maps \eqref{cosheafmap} form a commutative diagram. In other words, if we consider the set of strata in the $U_\alpha \subset X$ as a category (under inclusions) then $\F_k$ forms a contravariant functor from strata of $U_\alpha$ to abelian groups (cf. Proposition \ref{functor-sheaf}).

We may interpret our data as a system of coefficients suitable to define singular homology groups on $X$. Namely, we consider the finite formal sums 
$$\sum \beta_\sigma \sigma,
$$ 
where each $\sigma:\Delta\to X$ is a singular $q$-simplex which has image in a single chart $U_\sigma$ and is such that for each relatively open face $\Delta'$
of $\Delta$ the image $\sigma(\Delta')$
is contained in a single combinatorial stratum $\s_{\Delta'}$ of $X$.
Slightly abusing the notations we'll identify the source and the image of $\sigma$ with
the singular simplex $\sigma$ itself and say that $\tau=\sigma|_{\Delta'}$ is a face of $\sigma$.
Here $\beta_\sigma\in \F_k(\s_\Delta \cap U_\sigma)$. 

These chains form a complex $C_\bullet(X; \F_k)$ with the differential $\dd$ given by the standard singular differential followed by the maps \ref{cosheafmap}. We call such compatible singular chains with coefficients in $\F_k$ {\em tropical chains}. 
The groups 
$$H_{p,q}(X)=H_q(C_\bullet(X; \F_p), \dd)
$$
are called the {\em tropical homology} 
groups.

These homology groups is a version of singular homology groups of a topological space $X$
(after imposing the condition of compatibility of singular chains with the charts and combinatorial strata). 

A priori the groups $H_{p,q}(X)$ depend on the covering. Indeed, if we refine the covering the tropical chains will be more restrictive. However the usual chain homotopy arguments apply and show that the resulting homology groups are canonically isomorphic. Thus we can conclude that the tropical homology groups are independent of the covering $\{U_\alpha\}$.

In case $X$ has a polyhedral structure one can require the singular chains to be compatible with the polyhedral face structure on $X$, rather than with its combinatorial structure. Clearly, the homology groups defines by the two complexes are canonically isomorphic. For polyhedral $X$ there are other equivalent ways for constructing tropical homology groups: simplicial, cellular. This is what we are going to consider next.

\subsection{Cellular and simplicial tropical homology} 
We assume $X$ is polyhedral and compact throughout this subsection.
The main advantage of dealing with cellular and simplicial chain groups is that they are finitely generated. This will give an effective way to calculate the tropical homology.

Recall that $X$ comes with a subdivision into convex polyhedral domains. 
We define the cellular  chain  complex
$$
C^{cell}_q(X; \F_p)=\oplus \F_p(\Delta)=\oplus H_q(\Delta,\dd\Delta;\F_p(\Delta)).
$$ 
Here the direct sum is taken over all $q$-dimensional faces $\Delta$ of the subdivision.
The homology $H_q(\Delta,\dd\Delta;\F_p(\Delta))$ of the pair with constant coefficients equals $\F_p(\Delta)$ since each $q$-dimensional face $\Delta$ in $X$ 
is topologically a closed $q$-disk (recall that $X$ is compact).
 
Our next step is to define the boundary homomorphism
$\dd: C^{cell}_q(X; \F_p)\to C^{cell}_{q-1}(X; \F_p)$. The $\dd$ is 
the composition of the maps 
\begin{equation}\label{homcelld1}
H_q(\Delta,\dd\Delta; \F_p(\Delta))\to H_{q-1}(\dd\Delta; \F_p(\Delta))\to
H_{q-1}(\dd\Delta,\dd\Delta\cap\Sk_{q-2}(X);\F_p(\Delta)),
\end{equation}
the isomorphism
\begin{equation}\label{isocelld}
H_{q-1}(\dd\Delta,\dd\Delta\cap\Sk_{q-2}(X);\F_p(\Delta))
\to
\oplus
H_{q-1}(\Delta',\dd\Delta';\F_p(\Delta)),
\end{equation}
where the direct sum is taken over all $(q-1)$-dimensional subfaces $\Delta'\prec\Delta$, 
and
\begin{equation}\label{homcelld2}
\oplus H_{q-1}(\Delta',\dd\Delta';\F_p(\Delta)) \to
\oplus H_{q-1}(\Delta',\dd\Delta';\F_p(\Delta')).
\end{equation}
In \eqref{homcelld1} the first homomorphism is the boundary homomorphism of the pair $(\Delta,\dd\Delta)$ and
the second one is induced by the inclusion of the pairs $(\Delta,\emptyset)\subset (\Delta,\dd\Delta)$.
The isomorphism \eqref{isocelld} comes from the excision as the quotient space $\dd\Delta/(\dd\Delta\cap\Sk_{q-2}(X))$
is homeomorphic to a bouquet of $(q-1)$-dimensional spheres, one sphere for each $(q-1)$-dimensional
subface $\Delta'\prec\Delta$. 
Finally, the homomorphism \eqref{homcelld2} is induced by \eqref{cosheafmap}.

The homology groups of the cellular chain complex $(C^{cell}_\bullet(X; \F_p), \partial)$ are called the cellular tropical homology groups $H^{cell}_\bullet(X; \F_p)$.
If one has $X$ covered by the open stars of vertices we have the following identification. 

\begin{proposition}
The cellular tropical homology groups $H^{cell}_\bullet(X; \F_p)$ are canonically isomorphic
to the (singular) tropical homology groups $H_\bullet(X; \F_p)$.
\end{proposition}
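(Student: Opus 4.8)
The plan is to compare the two complexes through the skeletal filtration of $X$, identifying the cellular chain complex with the first page of the spectral sequence attached to the singular tropical complex (equivalently, running the classical inductive argument with the long exact sequences of the pairs $(\Sk_q,\Sk_{q-1})$).

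First I would filter the singular tropical chain complex by the skeleta: let $F_q C_\bullet = C_\bullet(\Sk_q(X);\F_p)$ denote the subcomplex spanned by those tropical singular chains whose image lies in $\Sk_q(X)$. Since each tropical singular simplex has image in a single chart and is required to be compatible with the (polyhedral) face structure of $X$, this is a genuine subcomplex, and we obtain an exhaustive increasing filtration $0 = F_{-1}\subset F_0\subset\cdots\subset F_n = C_\bullet(X;\F_p)$. Here the hypothesis that $X$ is covered by the open stars of vertices is used twice: it lets us barycentrically subdivide any singular simplex so that it fits into a single $U_v$ without changing its homology class, and it makes the coefficient identifications on a cell $\Delta$ intrinsic, since the relative interior of $\Delta$ lies in $U_v$ for every vertex $v$ of $\Delta$.

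The key step is to compute the relative groups $H_j(\Sk_q,\Sk_{q-1};\F_p)$ and to show they vanish for $j\neq q$ while $H_q(\Sk_q,\Sk_{q-1};\F_p)\cong\oplus_{\dim\Delta=q}\F_p(\Delta)=C_q^{cell}(X;\F_p)$. On the relative level only the parts of chains lying in open $q$-cells survive, and on the relative interior of a polyhedral face $\Delta$ the coefficient functor $\F_p$ is constant, equal to $\F_p(\Delta)$, by the flat identification over a stratum from Section \ref{sec:stratification}. By excision the pair $(\Sk_q,\Sk_{q-1})$ therefore breaks up as a disjoint union of cell–pairs $(\Delta,\dd\Delta)$ carrying the locally constant coefficient $\F_p(\Delta)$, and since each $q$-cell is a closed $q$-disk we get $H_j(\Delta,\dd\Delta;\F_p(\Delta))=\F_p(\Delta)$ for $j=q$ and $0$ otherwise — exactly the groups entering the definition of $C^{cell}_q$. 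Concentration of the relative homology in a single degree forces $E^2=E^\infty$, so the only surviving differential is $d^1$, and it remains to match $d^1$ with the cellular boundary $\dd$. This is a diagram chase: $d^1$ is the connecting map $H_q(\Sk_q,\Sk_{q-1})\to H_{q-1}(\Sk_{q-1})\to H_{q-1}(\Sk_{q-1},\Sk_{q-2})$, and unwinding it through the excision isomorphisms reproduces precisely the composition \eqref{homcelld1}, the bouquet-of-spheres isomorphism \eqref{isocelld}, and the coefficient map \eqref{cosheafmap} $\iota:\F_p(\Delta)\to\F_p(\Delta')$ of \eqref{homcelld2}. Hence $H^{cell}_\bullet(X;\F_p)=H_\bullet(E^1,d^1)\cong H_\bullet(C_\bullet(X;\F_p))=H_\bullet(X;\F_p)$, and the isomorphism is canonical because the filtration and all the identifications are.

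I expect the main obstacle to be the verification that the coefficient system localizes correctly at the relative level. The functor $\F_p$ is not globally locally constant: it genuinely jumps across strata via the maps $\iota$, and individual polyhedral cells may be infinite or carry sedentary subfaces. One must check carefully both that the relative homology of $(\Sk_q,\Sk_{q-1})$ sees only the constant value $\F_p(\Delta)$ on each open cell and that the connecting homomorphism captures exactly the jump $\iota$ across each codimension-one face $\Delta'\prec\Delta$. Controlling the contribution of the infinite and sedentary cells relies on the compactness of $X$, on regularity at infinity, and on the normalization $H_q(\Delta,\dd\Delta;\F_p(\Delta))=\F_p(\Delta)$ already built into the definition of the cellular complex.
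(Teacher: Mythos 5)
Your proof is correct, and it rests on the same foundation as the paper's argument: the skeletal filtration of $X$, the identification $H_q(\Sk_q(X),\Sk_{q-1}(X);\F_p)\cong C^{cell}_q(X;\F_p)$, and the crucial observation that $\F_p$ restricts to a constant coefficient system on the relative interior of each cell, so that classical arguments localize cell by cell. The packaging, however, is genuinely different. The paper does not run the spectral sequence; its key input is the cellular-homotopy (approximation) statement that $H_j(\Sk_q(X);\F_p)\to H_j(X;\F_p)$ is an epimorphism for $j\le q$ and an isomorphism for $j<q$ --- justified on the grounds that all relevant homotopies take place inside a single cell, where the coefficients are constant --- after which it compares the kernels of the two maps out of $H_q(\Sk_q(X);\F_p)$ and establishes surjectivity onto $H^{cell}_q(X;\F_p)$ by an explicit correction: a cellular cycle $c$ is represented by a singular chain in $\Sk_q(X)$ whose boundary is null-homologous in $\Sk_{q-1}(X)$ (using $H_{q-1}(\Sk_{q-2}(X);\F_p)=0$ for dimensional reasons), and adding a suitable chain supported in $\Sk_{q-1}(X)$ turns it into an honest singular cycle. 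Your route replaces this homotopy-theoretic input by the concentration statement $H_j(\Sk_q(X),\Sk_{q-1}(X);\F_p)=0$ for $j\neq q$, after which degeneration at $E^2$ and the identification of $d^1$ with the composition \eqref{homcelld1}--\eqref{homcelld2} are formal (indeed, the latter is essentially the paper's definition of the cellular boundary, so the diagram chase you defer is nearly definitional). The two inputs are equivalent via the long exact sequences of the pairs, so neither proof is more general; the trade-off is that your version concentrates all the geometric work in the excision/splitting step --- which, as you correctly flag, is exactly where the cosheaf nature of $\F_p$ and the sedentary cells must be controlled --- whereas the paper concentrates it in verifying that cellular approximation is compatible with a coefficient system that jumps only across cell boundaries.
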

\begin{proof}
As in algebraic topology with constant coefficients to prove this isomorphism we need to use cellular homotopy.
Let us recall that by the cellular homotopy argument the inclusion
$\Sk_q(X)\to X$ induces an epimorphism
\begin{equation}\label{skq}
H_j(\Sk_q(X);\F_p)\to H_j(X;\F_p)
\end{equation}
for $j\le q$ (which is an isomorphism for $j< q$).
Note that even though $\F_p$ is not a constant coefficient system, all cellular
homotopy takes place within a single cell, so the classical argument also holds here.

Consider the homomorphism (in singular homology groups)
induced by the inclusion of pairs $(\Sk_q(X),\emptyset)\subset (\Sk(X),\Sk_{q-1}(X))$
$$H_q (X; \F_p)\to H_q (\Sk_q(X),\Sk_{q-1}(X);\F_p)=C^{cell}_q(X; \F_p).$$
Its image consists of cycles by the construction of the boundary map in the short exact sequence
of the pair and thus it gives us a homomorphism
\begin{equation}\label{skq-cell}
H_q (\Sk_q(X); \F_p)\to H^{cell}_q (X; \F_p).
\end{equation}

Note that by cellular homotopy the kernel of \eqref{skq-cell} coincides with 
the kernel of \eqref{skq} for  $j=q$. To see surjectivity of \eqref{skq-cell} we consider an element $c\in H^{cell}_q(X; \F_p)$.
Subdividing the faces of $X$ into simplices if needed we may represent $c$ by a singular chain in $C_\bullet(\Sk_q(X);\F_p)$,
whose boundary $\dd c$ is null-homologous in 
$$C^{cell}_{q-1}(\Sk_{q-1}(X), \Sk_{q-2}(X);\F_p).
$$
But $H_{q-1}(\Sk_{q-2}(X); \F_p)=0$ by the dimensional reason and thus $\dd c$ must also vanish in $H^{cell}_{q-1}(\Sk_{q-1}(X);  \F_p)$. Thus we may correct $c$ (by adding to it a singular chain in $\Sk_{q-1}(X)$ whose
boundary coincides with $\dd c$) to make it a cycle in $C_\bullet(X;\F_p)$.
\end{proof}

Next observation will be very useful when we define the cap product action by the wave class.

\begin{lemma}\label{lem:divisible}
Let $\gamma= \sum \beta_\Delta \Delta$ be a cellular cycle in a compact tropical polyhedral space $X$. Then each $\beta_\Delta$ is divisible by the divisorial vectors of $\Delta$.
\end{lemma}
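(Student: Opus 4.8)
The plan is to turn the statement into a vanishing assertion for one specific piece of the boundary and then read that off the cycle equation. Fix a cell $\Delta$ together with a divisorial vector $w \in T_\Z(\Delta)$, and let $F = \Delta \cap \T_{\{w\}}$ be the codimension-one face at infinity cut off in the direction $w$. By \eqref{eq:strata_maps} the corestriction $\iota_{\Delta \to F}$ appearing in \eqref{cosheafmap} is $\Lambda^p$ of the projection $\pi_w \colon T_\Z(\Delta) \to T_\Z(\Delta)/\langle w\rangle = T_\Z(F)$ along $w$, and the kernel of $\Lambda^p(\pi_w)$ is exactly $w \wedge \Lambda^{p-1} T_\Z(\Delta)$. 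Hence $\beta_\Delta$ is divisible by $w$ if and only if $\iota_{\Delta \to F}(\beta_\Delta) = 0$, and the whole lemma is equivalent to: for every such pair $(\Delta,F)$ the divisorial projection $\iota_{\Delta \to F}(\beta_\Delta)$ vanishes. To organise the cycle condition I would split the cellular differential as $\partial = \partial_0 + \partial_1$, where $\partial_0$ gathers the boundary pieces landing on faces of the same sedentarity and $\partial_1$ those landing on faces of one higher sedentarity (precisely the projections $\iota_{\Delta \to F}$ above). This is the filtration of $C^{cell}_\bullet(X;\F_p)$ by sedentarity: $\partial_0$ preserves it, $\partial_1$ raises it by one, so isolating the terms of a fixed sedentarity $s$ in $\partial\gamma = 0$ gives $\partial_0\gamma^{(s)} + \partial_1\gamma^{(s-1)} = 0$ for each $s$.

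The key combinatorial input is that, by regularity at infinity (Proposition \ref{prop:subface}), every face has a \emph{unique} parent, so among cells of a given sedentarity a face at infinity receives at most one projection through $\partial_1$; thus $\partial_1\gamma|_F$ is, up to sign, the single term $\iota_{\Delta \to F}(\beta_\Delta)$ coming from the parent $\Delta$ of $F$. The cleanest situation is the extremal one: if $F$ is a facet of its own sedentarity stratum it has no cofaces of the same sedentarity, the $\partial_0$-contributions at $F$ are absent, and the cycle equation at $F$ reads $\iota_{\Delta \to F}(\beta_\Delta) = 0$ on the nose. I would then propagate this inward by descending induction on sedentarity, using the relations $\partial_1\gamma^{(s-1)} = -\partial_0\gamma^{(s)}$ together with the fact that each family of faces is the face poset of a simplicial cone whose minimal (most sedentary) member is \emph{finite}, i.e.\ compact. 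Compactness enters exactly through this finiteness, and it is what the hypothesis on $X$ is for: on a non-compact model such as $\T^n$ the face at infinity can fail to be capped and the conclusion breaks down, which is a useful check on the logic.

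The step I expect to be the main obstacle is the decoupling at a sedentary face $F$ that also carries cofaces of its own sedentarity. There the cycle equation only asserts that the parent's projection $\iota_{\Delta \to F}(\beta_\Delta)$ cancels against the sum of the same-sedentarity inclusions $\sum_{\Delta'}\iota_{\Delta' \to F}(\beta_{\Delta'})$, whereas the lemma demands that the projection part vanish on its own. The plan for this is to exploit the simplicial-cone (Koszul-type) structure of the families from Proposition \ref{prop:subface}: the vertical differential $\partial_1$ restricted to a family is the coboundary of the cochain complex of a simplex, which is acyclic, and the finiteness of the minimal sedentary face supplies the boundary condition at the apex that kills the relevant cohomology. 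Consequently $\partial_1$ has nothing in which to ``hide'' a cancelling flux, and the two types of contribution at $F$ must vanish separately, closing the induction. Making this acyclicity argument precise — matching the signs of the cellular differential with the simplicial coboundary and checking that the induced maps on the varying coefficient groups $\F_p$ are compatible with $\iota$ as in \eqref{cosheafmap} — is where the real work lies.
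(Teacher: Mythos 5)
Your overall route is the same one the paper takes in its (three-line) proof: reduce divisibility to the vanishing of the projections $\iota_{\Delta\to F}(\beta_\Delta)$ to the faces at infinity, use compactness to guarantee those faces exist, and read the vanishing off the cycle equation there. The difference is that you notice this last step requires an argument, while the paper simply asserts the decoupling (``the coefficient of $\partial\gamma$ at $\Delta_q$ comes only from the projection of $\beta_\Delta$ along $q$'') without mentioning the same-sedentarity cofaces at all. Unfortunately, two of your intermediate claims fail. First, the claim that a face at infinity ``receives at most one projection through $\partial_1$'' is wrong once the sedentarity of $F$ is at least $2$: Proposition \ref{prop:subface} gives a unique \emph{mobile} ancestor, not a unique immediate predecessor, and if $F=E_I$ with $|I|=s+1\ge 2$ then all $s+1$ cells $E_{I\setminus\{j\}}$, $j\in I$, of its family have $F$ as a codimension-one face and contribute projection terms. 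Second, and decisively, the deferred ``decoupling'' claim --- that the parent's projection and the same-sedentarity inclusions at $F$ must vanish separately --- is not merely the hard step you postpone: it is false, so no Koszul/acyclicity argument on the family posets (nor anything else) can close the gap in the generality in which you and the paper are working.

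Here is a counterexample to the decoupling, and hence to the statement as literally phrased in degree $q<n$. Take $X=\tp^2$ with its standard structure: in a chart $\T^2$, the mobile vertex $v_0=(0,0)$, mobile rays $r_1,r_2,r_3$ from $v_0$ in directions $(-1,0),(0,-1),(1,1)$ ending at sedentarity-one points $p_1,p_2,p_3$, the three mobile facets between consecutive rays, and on the line $\{x_1=-\infty\}$ the two sedentarity-one edges $E_1$ (from the corner $(-\infty,-\infty)$ to $p_1$) and $E_1'$ (from $p_1$ to the opposite corner). Orient $r_1,r_2$ away from $v_0$, orient $E_1'$ away from $p_1$, and set $\gamma=(0,1)\,r_1+(0,-1)\,r_2+(0,1)\,E_1'\in C^{cell}_1(X;\F_1)$; these are legitimate coefficients since $\F_1(r_i)=\Z^2$ and $\F_1(E_1')=\Z\cdot(0,1)$. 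Then $\partial\gamma=0$: at $v_0$ the terms $-(0,1)-(0,-1)$ cancel; at $p_1$ the projection $\iota_{r_1\to p_1}(0,1)=(0,1)$ is cancelled by the same-sedentarity contribution $-(0,1)$ of $E_1'$; at $p_2$ the projection of $(0,-1)$ along the divisorial direction $(0,-1)$ is zero; and $\F_1$ of the corners vanishes. Yet $\beta_{r_1}=(0,1)$ is not divisible by the divisorial vector $(-1,0)$ of $r_1$. The argument (yours and the paper's) is sound exactly when the extra contributions cannot occur, e.g.\ in top degree $q=n$: a cell of sedentarity $s$ has dimension at most $n-s$, so every $n$-cell is mobile, and uniqueness of the parent then really does make $\iota_{\Delta\to\Delta_q}(\beta_\Delta)$ the only term at $\Delta_q$. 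In lower degrees only a homology-level statement can survive (the $\gamma$ above is homologous to $(0,1)(E_1+E_1')$, which does satisfy divisibility). So the step you flagged as ``where the real work lies'' is a genuine gap, not a technical one --- and it is a gap you inherited from, rather than introduced into, the paper's own proof.
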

\begin{proof}
We only have to check this for infinite cells $\Delta$. Since $X$ is compact, $\Delta$ must have a boundary face $\Delta_q$ (of sedentarity one higher) for every divisorial direction $q$. But the coefficient  of $\partial \gamma$ at  $\Delta_q$ comes only from the projection of $\beta_\Delta$ along $q$. 
\end{proof}

There is a simplicial variant of the tropical homology arising from the first baricentric simplicial chains on $X$. (The baricentric subdivision of $X$ was described at the end of Section \ref{sec:polyhedral}). Then we can consider the baricentric simplicial chain complex with coefficients in $\F_p$ as a subcomplex $C^{bar}_\bullet(X; \F_p)$ of $C_\bullet(X; \F_p)$. 

Note that the cellular chain complex $C^{cell}_\bullet(X; \F_p)$
can be viewed as a subcomplex of $C^{bar}_\bullet(X; \F_p)$, where all coefficients on simplices of the same cell are taken equal. 
Applying the standard chain homotopy arguments for constant coefficients one can show that this inclusion 
$$C^{cell}_\bullet(X; \F_p) \hookrightarrow C^{bar}_\bullet(X; \F_p)
$$ 
is again a quasi-isomorphism. This allows us to identify both baricentric simplicial and cellular homology with the tropical homology.

\begin{remark}
In \cite{IKMZ} it is shown that in the case when $X$ is a smooth projective tropical manifold that comes
as the limit of a complex 1-parametric family the groups $H_{p,q}(X)$ can be
obtained from the limiting mixed Hodge structure of the approximating family. In particular, we have the equality
$$h^{p,q}(X_t)=\operatorname{rk}H_{p,q}(X),$$
for the Hodge numbers $h^{p,q}(X_t)$ of a generic fiber $X_t$ from the approximating family.
\end{remark}

\begin{remark}
In Section \ref{section:konstruktor} we will show that there is a fairly small subcomplex of $C^{bar}_\bullet(X; \F_p)$, called konstruktor, which suffices to calculate the homology groups $H_{p,q}(X)$ in the smooth projective realizable case.
\end{remark}

\subsection{Tropical cohomology groups}
Finally we define tropical {\em cochains} $C^\bullet(X; \F^p)$ to be certain linear functionals on charts/strata compatible $\Z$-singular chains with values in $\bigoplus_{\alpha, \s} \F^p(\s\cap U_\alpha)$. Namely, if a simplex $\sigma$ lies in $\s \cap U_\alpha$ then we require the value of the cochain to lie in $\F^p(\s \cap U_\alpha)$. If $\sigma$ also lies in $U_\beta$, then its value in $\F^p(\s \cap U_\beta)$ should coincide with its value in  $\F^p(\s \cap U_\alpha)$ via the differential of the overlapping map.

Then one can define the differential as the usual coboundary followed by the maps dual to \eqref{cosheafmap} 
$$
\delta \alpha (\sigma) = \alpha (\dd \sigma) \in \bigoplus_{\tau\subset \sigma} \F^p(\Delta_\tau) \rightarrow \F^p(\Delta_\sigma).
$$
We can define the {\em tropical cohomology} groups 
$$H^{p,q}(X)=H^q(C^\bullet(X; \F^p), \delta).
$$

\subsection{Sheaf/cosheaf (co)homology}
To make connections with sheaf (co)homology theories we use the coefficient systems $\F_p$ to define a constructible cosheaf with respect to the combinatorial stratification of  $X$. With a slight abuse of notations we denote this cosheaf also by $\F_p$.  A cosheaf is a suitable notion to take homology, just like sheaf  for cohomology.

First we construct the pre-cosheaf in each open chart $U_\alpha$.
Given an open set $U\subset U_\alpha$ we consider the poset formed by the connected
components of intersections of the strata of $U_\alpha$ with $U$. The order is given by adjacency.
This poset can be represented by a quiver (oriented graph) $\Gamma(U)$.
Each vertex $v\in\Gamma(U)$ corresponds to a connected component of the intersection $U\cap \s$
of the open set $U$ and a stratum $\s$ of $U_\alpha$. A single stratum can produce several vertices in $\Gamma(U)$, see Figure \ref{cosheaf-explanation}.

To each vertex $v$ we associate the coefficient group $\F_p(v)=\F_p(\s)$.
To an arrow from $v$ to $w$ we associate 
the relevant homomorphism $i_{vw}: \F_p(v)\to \F_p(w)$ from \eqref{cosheafmap}. The groups $\F_p(v)$ with maps $i_{vw}$ thus form a representation of the quiver $\Gamma(U)$.

\begin{figure}
\centering
\includegraphics[height=50mm]{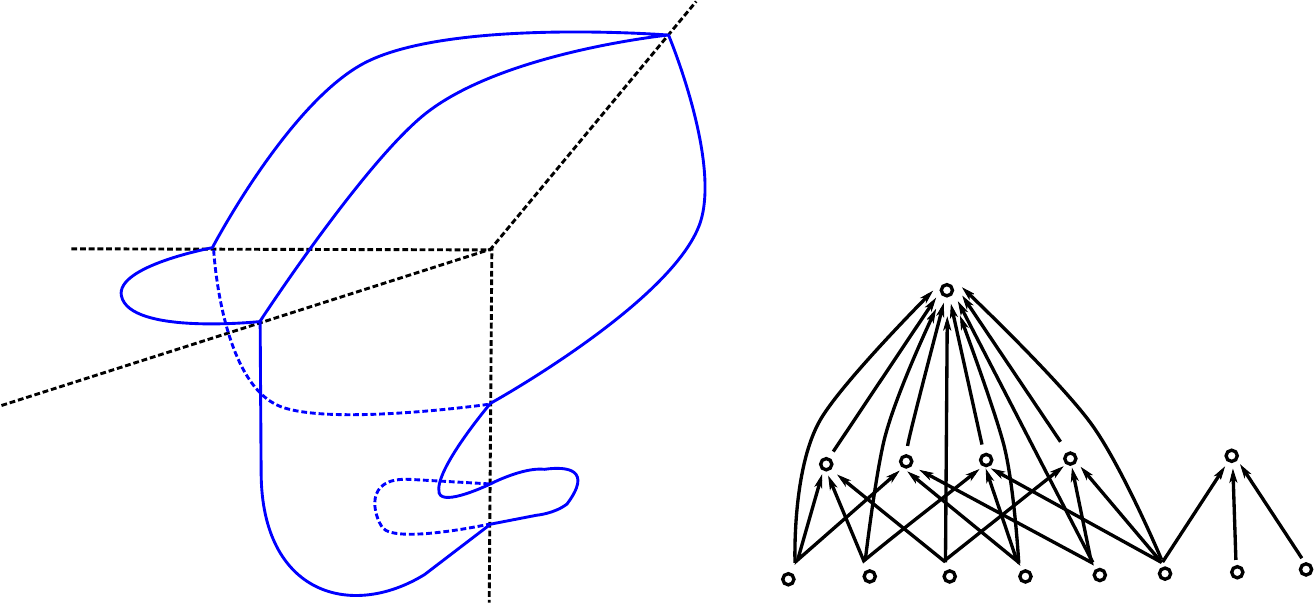}
\caption{\label{cosheaf-explanation} An open set in a polyhedral complex and the corresponding quiver. Here $\F_1(U) \cong \Z^4$.}
\end{figure}

\begin{definition}\label{def-fu}
$\F_p(U)$ is the quotient of the direct sum $\bigoplus_{v\in\Gamma(U)} \F_p(v)$ by the subgroup generated by the elements $a - i_{vw}(a)$ for all pairs of connected vertices $(v,w)$, and all $a\in \F_p(v)$.
\end{definition}

Note that an inclusion $U\subset V\subset U_\alpha$ induces a morphism between the corresponding quivers $\Gamma(U)\to\Gamma(V)$ with isomorphisms at the corresponding vertices. This map clearly preserves the equivalence relation, and hence descends to the map $\F_p(U) \to \F_p(V)$.
Thus, we get a covariant functor from the open sets $U\subset U_\alpha$ (with morphism given by inclusions) to free abelian groups $U\mapsto \F_p(U)$. It is easy to check that all sequences 
\begin{equation}\label{exact:cosheaf} 
\bigoplus_{i,j} \F_p(U_i \cap U_j) \to \bigoplus_i \F_p(U_i) \to \F_p(U) \to 0,
\end{equation}
where $U=\bigcup U_i$, are exact.
Thus the functor $U\mapsto\F_p(U)$ is a cosheaf (cf., e.g. \cite{Bredon}) on the open set $U_\alpha$.

To define the sheaf $\F^p$ we need a contravariant functor $U\mapsto\F^p(U)$. Let  $\Gamma(U)$ to be the directed graph as before with all arrows reversed. We set
 $\F^p(U)$ to be the subgroups of $\bigoplus_{v\in\Gamma(U)} \F^p(v)$, where the collections of elements $\{a_v\in \F^p(v)\}$ are compatible with all the morphisms dual to  \eqref{cosheafmap}.  Note that these collections are precisely the ones annihilated by the elements $a- i_{vw}(a)$ from the Definition \ref{def-fu}, and thus $F^p(U)=\Hom (\F_p(U), \Z)$. Dualizing the exact sequences \eqref{exact:cosheaf} we see that  the functor $U\mapsto\F^p(U)$ is a (constructible) sheaf on $U_\alpha$.

Finally we can glue together the sheaves and cosheaves defined on all open charts $U_\alpha$ (see, e.g., \cite{hartshorne}, Ch. II, Exer. 1.22, for the sheaf version). We get a well defined cosheaf $\F_k$ and sheaf $\F^k$ on $X$ as long as we have the isomorphisms $\psi_{\alpha\beta}$ between the charts  which satisfy $\psi_{\alpha\beta}\circ \psi_{\beta\gamma} = \psi_{\alpha\gamma}$ (see Proposition \ref{prop:differentials}).

The combinatorial strata of $X$ form a category. Its objects are the strata themselves. There is a unique morphism from $\s$ to $\s'$ if $\s\prec \s$, and no morphisms otherwise. Our reasoning above can be formalized into the following general statement.

\begin{proposition}\label{functor-sheaf}
Suppose $X$ has an open covering $\{U_\alpha\}$ and covariant functors $\F_\alpha$ for each $\U_\alpha$ from the combinatorial strata of $\U_\alpha$ to abelian groups which are compatible on the overlaps in the sense of Proposition \ref{prop:differentials}. Then gluing gives rise to a constructible sheaf on $X$. Contravariant functors  yields a constructible cosheaf on $X$.

If the functors $\F_\alpha$ behave naturally with respect to refinements of the covering $\{U_\alpha\}$ the resulting (co)sheaf $\F$ does not depend on the covering.
\end{proposition}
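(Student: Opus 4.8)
The plan is to reduce the proposition to the single-chart construction just carried out, supplemented by a standard gluing lemma and a refinement argument. First I would note that on a single chart $U_\alpha$ the work is already done: the construction of Definition \ref{def-fu} (and its evident dual) turns $\F_\alpha$ into a presheaf on the open subsets of $U_\alpha$, and the exactness of the sequences \eqref{exact:cosheaf} promotes it to the desired (co)sheaf on $U_\alpha$ — covariant functors yielding a sheaf and contravariant ones a cosheaf, exactly as in the two constructions preceding the statement. The essential point is that this exactness is a purely formal consequence of the quiver-representation description of $\F_\alpha(U)$ and uses nothing about the particular coefficient groups; it therefore applies verbatim to an abstract functor. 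Thus the only genuinely new content is the passage from local (co)sheaves on the $U_\alpha$ to a global object on $X$.

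Second, to glue I would produce, for each pair $\alpha,\beta$, an isomorphism $\psi_{\alpha\beta}$ between the restrictions of the two chart (co)sheaves to $U_\alpha\cap U_\beta$. On the overlap the combinatorial strata seen from either chart agree, since the combinatorial stratification is intrinsic to $X$; the compatibility hypothesis (in the sense of Proposition \ref{prop:differentials}) then supplies isomorphisms of coefficient groups on each stratum which commute with all the structure maps \eqref{cosheafmap}, and these assemble into $\psi_{\alpha\beta}$. The naturality clause of Proposition \ref{prop:differentials}, namely $d(g\circ h)=(dg)\circ(dh)$, yields the cocycle identity $\psi_{\alpha\beta}\circ\psi_{\beta\gamma}=\psi_{\alpha\gamma}$ on triple overlaps. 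With the cocycle condition established, the usual gluing lemma for sheaves (\cite{hartshorne}, Ch.~II, Exer.~1.22) produces the sheaf on $X$, and the formally dual procedure (cf.~\cite{Bredon}) produces the cosheaf. Constructibility is then automatic: each chart (co)sheaf is locally constant along the strata of $U_\alpha$, and since these strata are the traces of the intrinsic combinatorial strata of $X$, the glued object is locally constant on each global stratum.

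Finally, for independence of the covering I would invoke the naturality of the $\F_\alpha$ under refinement. Given two coverings, I would pass to a common refinement; naturality identifies the (co)sheaf glued from each of the original coverings with the one glued from the refinement, and composing these identifications gives the desired canonical isomorphism.

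The step I expect to be the main obstacle is the bookkeeping in the gluing rather than any single deep idea: one must check carefully that the local stratifications of the various $U_\alpha$ restrict to one and the same stratification of $X$ on the overlaps, and that each $\psi_{\alpha\beta}$ simultaneously respects the coefficient groups on every stratum and all the structure maps \eqref{cosheafmap}, so that the cocycle relation is literally the composition law for differentials. Once this is in place, both the sheaf-gluing lemma and the refinement argument are entirely formal.
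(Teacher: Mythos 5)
Your proposal is correct and follows essentially the same route as the paper: the paper's ``proof'' is precisely the preceding discussion (the quiver construction of Definition \ref{def-fu}, the exactness of the sequences \eqref{exact:cosheaf} and their dualization, and the gluing of the chart (co)sheaves via isomorphisms $\psi_{\alpha\beta}$ satisfying the cocycle condition supplied by Proposition \ref{prop:differentials}, with the same reference to \cite{hartshorne}, Ch.~II, Exer.~1.22). Your common-refinement argument for independence of the covering is the standard formalization of what the paper leaves implicit.
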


Finally we can use the sheaf-theoretic or \v{C}ech homology and cohomology for cosheaves $\F_p$ and sheaves $\F^p$. The standard algebraic topology techniques identify all these homology theories with the tropical (co)homology. 

\begin{proposition}
There are natural isomorphisms 
$$H_{p,q}\cong H_q(X, \F_p) \quad \text{and} \quad H^{p,q}\cong H^q(X, \F^p),
$$
 where on the right hand side are the sheaf-theoretic (co)homology groups.
\end{proposition}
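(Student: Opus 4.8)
The plan is to establish the two isomorphisms by the standard route: show that the cosheaf $\F_p$ (resp. the sheaf $\F^p$) is acyclic on the open sets of a suitable covering, and then invoke a \v{C}ech-to-derived-functor comparison, so that the \v{C}ech computation of sheaf/cosheaf (co)homology agrees with the tropical (co)homology already identified with cellular and singular versions. First I would fix the covering by open stars of vertices $\{U_v\}$ (as introduced at the end of Section~\ref{sec:polyhedral}), together with all their finite intersections. The key point is that each such $U_v$, and each intersection $U_{v_0}\cap\dots\cap U_{v_k}$, is contractible onto the relative interior of a single face $\Delta$ of $X$, and the combinatorial stratification of that intersection has a unique minimal (closed) stratum. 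On such a set the cosheaf value $\F_p(U)$ computed via the quiver $\Gamma(U)$ of Definition~\ref{def-fu} collapses, by the adjacency relations $a\sim i_{vw}(a)$, to the single group $\F_p(\Delta)$ attached to the minimal stratum; dually $\F^p(U)\cong\F^p(\Delta)$. Thus the restriction of the (co)sheaf to the poset of these intersections is a constant-coefficient system on a contractible base, hence has no higher \v{C}ech (co)homology.

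Second I would assemble the \v{C}ech complex. For cohomology, the \v{C}ech complex $\check{C}^\bullet(\{U_v\}, \F^p)$ has $\check{C}^q=\bigoplus_{v_0<\dots<v_q}\F^p(U_{v_0\dots v_q})$; by the collapse above this is $\bigoplus \F^p(\Delta_{v_0\dots v_q})$, where $\Delta_{v_0\dots v_q}$ is the minimal face meeting all the stars, which is exactly the face spanned by $v_0,\dots,v_q$ in the baricentric combinatorics. I would then identify this \v{C}ech complex, with its alternating-sum differential twisted by the restriction maps dual to \eqref{cosheafmap}, with the baricentric simplicial cochain complex dual to $C^{bar}_\bullet(X;\F_p)$. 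Since the excerpt has already shown (via the cellular-to-baricentric quasi-isomorphism and the cellular-to-singular comparison in Proposition~\ref{functor-sheaf}'s neighbourhood) that $C^{bar}_\bullet$ computes $H_{p,q}$, and dually $H^{p,q}$, the \v{C}ech computation yields $\check{H}^q(X,\F^p)\cong H^{p,q}(X)$. The cosheaf side is the formal dual: the \v{C}ech chain complex $\check{C}_\bullet(\{U_v\},\F_p)$ with $\check{C}_q=\bigoplus\F_p(U_{v_0\dots v_q})$ collapses to $\bigoplus\F_p(\Delta_{v_0\dots v_q})$ and is identified with $C^{bar}_\bullet(X;\F_p)$ itself, giving $\check{H}_q(X,\F_p)\cong H_{p,q}(X)$.

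Third I would upgrade the \v{C}ech identification to the derived-functor (sheaf-theoretic) one. For the sheaf $\F^p$ this is the classical statement that when the covering is acyclic for $\F^p$ (which is exactly the local collapse established in the first step), \v{C}ech cohomology agrees with sheaf cohomology (Leray's theorem; cf.\ the reference to \cite{hartshorne} used earlier for gluing); one may also refine any covering down to stars of a baricentric subdivision, which by the remark after Proposition~\ref{functor-sheaf} does not change the (co)sheaf, so the acyclic cover hypothesis is genuinely available. For the cosheaf $\F_p$ I would use the homological analogue for constructible cosheaves on a finite poset-stratified space, as in \cite{Bredon}: a cover by stars is a \v{C}ech-acyclic cover, and cosheaf homology is computed by the \v{C}ech chain complex. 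Naturality of all maps with respect to refinement is guaranteed by the compatibility in Proposition~\ref{prop:differentials}, which also makes the isomorphisms canonical.

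The main obstacle I expect is the first step, namely verifying cleanly that the cosheaf value $\F_p(U)$ on a star-intersection really reduces to the single group $\F_p(\Delta_{\min})$ of the unique most-sedentary minimal stratum, rather than to some larger quotient. The subtlety is bookkeeping with sedentarity: the quiver $\Gamma(U)$ may have several vertices coming from one stratum (as flagged in Figure~\ref{cosheaf-explanation}), and the maps $i_{vw}$ of \eqref{cosheafmap} run in the direction of \emph{increasing} codimension/sedentarity, so I must check that the poset of strata in $U_{v_0\dots v_q}$ has a unique source (for the cosheaf) and a unique sink (for the sheaf) and that all $i_{vw}$ along the way are compatible isomorphisms-or-projections in just the right pattern to force the collapse. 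This is exactly where regularity at infinity and the simplicial-cone structure of sedentary families (Proposition~\ref{prop:subface}) are needed, and I would spend most of the care there; the remaining comparison of differentials and the Leray-type conclusion are then routine.
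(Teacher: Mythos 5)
The paper offers no real proof of this proposition --- it consists of the single sentence that ``standard algebraic topology techniques identify all these homology theories'' --- and your Leray/acyclic-cover plan is precisely the standard technique being invoked, so in outline you and the paper agree. Your quiver computation is also sound: on a star intersection $U$ the poset of strata has a unique minimal element (by Definition \ref{def:poly_complex} and, for the sedentary bookkeeping, Proposition \ref{prop:subface}), every component has a directed path of maps \eqref{cosheafmap} into it, and commutativity gives a retraction of $\bigoplus_v \F_p(v)$ onto the group at that minimal stratum, so $\F_p(U)\cong\F_p(\Delta_{\min})$ and dually for $\F^p$. The genuine gap is the sentence ``Thus the restriction of the (co)sheaf to the poset of these intersections is a constant-coefficient system on a contractible base, hence has no higher \v{C}ech (co)homology.'' This is a non-sequitur: what you computed is only the degree-zero value (the colimit, resp.\ limit, over the quiver), and that does not make the restricted (co)sheaf constant --- its stalks genuinely vary from stratum to stratum (on the nodal curve of Example \ref{eg:nodal}, $\F_1$ has stalk $\Z^2$ at the vertex and $\Z$ on the adjacent edges, even on an arbitrarily small star). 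Moreover constructible sheaves are not homotopy invariant, so contractibility of $U$ by itself proves nothing. The hypothesis Leray's theorem actually needs, $H^j(U;\F^p|_U)=0$ for $j>0$ on all star intersections, is true but requires its own argument: for instance induction on the number of strata of $U$ using the exact sequence $0 \to j_!j^*\F^p \to \F^p \to i_*i^*\F^p \to 0$ for the inclusion $i$ of the closed minimal stratum and $j$ of its open complement, or a citation of the cellular/constructible (co)sheaf comparison theorem (Bredon \cite{Bredon} for the cosheaf side). That missing argument is exactly the content hiding behind the paper's phrase ``standard algebraic topology techniques''; without it your second and third steps have nothing to stand on.

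Two smaller points. First, your proof requires $X$ to be compact and polyhedral, since it uses the star cover and baricentric subdivision of Section \ref{sec:polyhedral}, whereas the proposition is stated for an arbitrary tropical space with its combinatorial stratification; in the general case one would instead compare the singular tropical (co)chain complex with a Godement-type resolution of $\F^p$, using the local collapse at each point. Second, for a non-simplicial polyhedral structure the \v{C}ech complex of the cover by stars of the \emph{original} vertices is indexed by all tuples of vertices lying in a common face, which is neither the cellular nor the baricentric complex (for a square, pairs of opposite vertices contribute); your fallback of refining to stars of baricentric vertices is the correct fix, and there the \v{C}ech complex is literally the baricentric complex with the coefficient of a flag placed at its minimal face.
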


\section{Tropical waves}

\subsection{Waves and cowaves}
There is also another collection of sheaves and cosheaves that can be associated to a tropical space $X$.  Recall that for every point $x\in X$ we defined the wave tangent spaces $W(x)$ in Section \ref{sec:tangent}.
\begin{definition}\label{def-wk}
We define $W_k(x)$ as the exterior power $\Lambda^k W(x)$.
We also consider the dual vector space $W^k(x)$.
\end{definition}

In any given chart $U_\alpha$ the $W(x)$ can be canonically identified for points in a single stratum $\s$. Thus we may write $W_k(\s\cap U_\alpha)=W_k(x)$ for any point $x\in \s\cap U_\alpha$.
For a pair $\s\prec\s'$ of two adjacent strata the map \eqref{eq:strata_maps} induces the natural homomorphisms 
\begin{equation}\label{Warrow}
\pi: W_k(\s)\to W_k(\s') \ \ \ \text{and}\ \ \ \hat\pi: W^k(\s')\to W^k(\s).
\end{equation}
By Proposition \ref{functor-sheaf} the coefficient system $W_k$ defines a constructible sheaf $\W_k$ on $X$, whereas the $W^k$ defines a cosheaf $\W^k$ for every integer $k\ge 0$.

\begin{definition}
Tropical {\em wave} and {\em cowave} groups, respectively, are 
\begin{equation}\label{tropwave-defn}
H^q(X; \W_k)
\ \ \ \text{and}\ \ \
H_q(X; \W^k). 
\end{equation}
\end{definition}

Again, we can think of these groups from the sheaf-theoretic point of view or as stratum-compatible singular (co)homology with coefficients in the systems $W_k$ and $W^k$.

\begin{example}\label{eg:nodal}
Let us consider a tropical genus 2 curve $C$ with a simple double point.
The underlying topological space of $C$ is a wedge of two circle, i.e. it is a graph with a single vertex $v$
and two edges that are glued to $v$, see Figure \ref{fig:genus2singular}.

The tropical structure in the interior of each edge is isomorphic to an open interval of finite length in $\R$
(treated as the tropical torus $\T^\times=\T\setminus\{-\infty\}$).
 \begin{figure}
    \centering
    \includegraphics[width=2in]{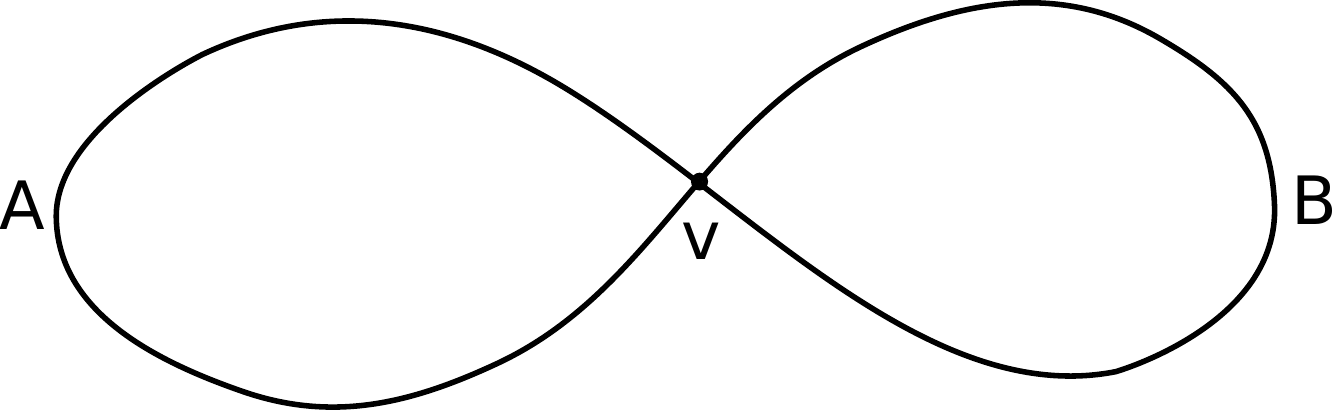}
       \caption{Nodal genus 2 curve.}
      \label{fig:genus2singular}
  \end{figure}
The tropical structure at the vertex $v$ is such that the four primitive vectors divide into 2 pairs of opposite vectors. This means that the chart at $v$ is given by a map to $\R^2$ such that a
neighborhood of $v$ in $C$ goes to the union of coordinate axes and
the four primitive vectors near $v$ go to the unit tangent vectors to those axes.

Thus, $\F_1(v)=\Z^2$ and $W_1(v)=0$. On the other hand every point $x$ in the interior of either edge has the groups $\F_1(x)=\Z$ and $W_1(x)=\R$. The group $\F_0(x)$ is always $\Z$ and $W_0(x)=\R$ for any point $x$.
From the two term cell complex one can easily calculate 
$$
H_0(C; \F_0)\cong \Z, \quad H_0(C; \F_1)\cong \Z, \quad H_1(C; \F_0)\cong \Z^2 ,\quad H_1(C;\F_1)\cong \Z,
$$
and 
$$
H^0(C; \W_0)\cong \R, \quad H^0 (C; \W_1)=0, \quad H^1(C; \W_0)\cong \R^2, \quad H^1 (C; \W_1) \cong \R^2.
$$ 
\end{example}

In general,  $\F_0$ and $\W_0$ are constants, thus for $p=0$ we recover the ordinary topological homology and cohomology groups.

\begin{proposition}
We have $H_{0,q} =H_q(X;\Z)$, $H_{q}(X;\W^0)= H_q(X;\R)$, $H^{0,q}=H^q(X;\Z)$, $ H^{q}(X;\W_0)= H^q(X;\R)$. 
\end{proposition}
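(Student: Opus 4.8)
The plan is to reduce the statement to the observation that the coefficient systems $\F_0$ and $\W_0$ are \emph{constant} and then invoke the identifications between tropical and sheaf-theoretic (co)homology that have already been established in the excerpt. The key point is that for $p=0$ we have $\F_0(x)=\Lambda^0(T_\Z(x))=\Z$ for every point $x$, with all the structure maps $\iota:\F_0(x)\to\F_0(y)$ of \eqref{cosheafmap} equal to the identity, so the cosheaf $\F_0$ is the constant cosheaf $\underline{\Z}$. Dually, $\F^0(x)=\Hom(\Z,\Z)=\Z$ and the sheaf $\F^0$ is the constant sheaf $\underline{\Z}$. Likewise $W_0(x)=\Lambda^0 W(x)=\R$ with all maps $\pi$ in \eqref{Warrow} the identity, so the sheaf $\W_0$ is the constant sheaf $\underline{\R}$ and the cosheaf $\W^0$ is the constant cosheaf $\underline{\R}$.

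First I would verify these constancy claims directly from the definitions: $\Lambda^0$ of any lattice is $\Z$ (and of any vector space is $\R$), and the induced maps on $\Lambda^0$ of the maps $\iota$ and $\pi$ are the identity regardless of how those maps act in positive degree, since $\Lambda^0(f)=\id$ for any linear $f$. Hence all four coefficient systems are genuinely constant as (co)sheaves on $X$. Next I would apply the Proposition immediately preceding this statement, which gives the natural isomorphisms $H_{p,q}\cong H_q(X,\F_p)$ and $H^{p,q}\cong H^q(X,\F^p)$, together with the analogous identification for the wave and cowave groups $H^q(X;\W_k)$ and $H_q(X;\W^k)$. Specializing to $p=0$, $k=0$ turns each tropical group into the sheaf- or cosheaf-theoretic (co)homology of $X$ with constant coefficients in the corresponding ring. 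The four desired identities $H_{0,q}=H_q(X;\Z)$, $H_q(X;\W^0)=H_q(X;\R)$, $H^{0,q}=H^q(X;\Z)$, and $H^q(X;\W_0)=H^q(X;\R)$ then follow, since sheaf (co)homology with constant coefficients recovers ordinary singular (co)homology. Because $X$ is (locally contractible as a polyhedral space), the constant-coefficient sheaf cohomology agrees with singular cohomology, and dually for the constant cosheaf.

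The main obstacle is not any deep computation but rather the bookkeeping of matching the \emph{direction} of the arrows so that constant coefficients land in the right variance. One must check that the cosheaf $\F_0$ uses the maps $\iota$ (which are identities in degree $0$) while the sheaf $\W_0$ uses the maps $\hat\pi$ (again identities in degree $0$), and confirm that ``constant'' in the cosheaf sense and ``constant'' in the sheaf sense both produce ordinary (co)homology via the identifications already proved. I would be careful that the cowave groups $H_q(X;\W^0)$ are defined as the homology of the constant \emph{cosheaf} $\underline{\R}$, which computes $H_q(X;\R)$, and symmetrically that the wave groups $H^q(X;\W_0)$ are the cohomology of the constant \emph{sheaf} $\underline{\R}$. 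Once the constancy and the variance are pinned down, the proof is a one-line appeal to the preceding propositions and the classical fact that constant-coefficient sheaf/cosheaf (co)homology equals singular (co)homology.
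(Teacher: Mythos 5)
Your proof is correct and follows essentially the same route as the paper, whose entire argument is the one-line observation preceding the proposition: $\F_0$ and $\W_0$ (and dually $\F^0$, $\W^0$) are constant coefficient systems, so the stratum-compatible singular (co)homology reduces to ordinary (co)homology with $\Z$ or $\R$ coefficients. Your additional bookkeeping about $\Lambda^0$ killing all structure maps and about matching sheaf/cosheaf variance is exactly the content the paper leaves implicit.
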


\subsection{Pairing of $\F$ and $\W$}
The importance of the wave classes stems from their action on the tropical homology via a natural bilinear map
$$\cap: H^r (X; \W_k) \otimes H_{q}(X; \F_p \otimes \R)  \to H_{q-r}(X; \F_{p+k} \otimes \R)$$
which we are going to define now.
On the chain level this map is just the standard cap product between singular chains and cochains coupled with the wedge product on the coefficients $\wedge: W_k \otimes \F_p \to \F_{p+k} \otimes \R$.

Let us clarify the meaning of the wedge multiplication. For a mobile point $x\in X$ the wave tangent space $W(x)$ is naturally a subspace in $T(x)$ hence the product makes sense on the nose. For any sedentary point $x$ the wave space $W(x)$ naturally projects (along the divisorial directions) to $W'(x)$, which is a subspace of $T(x)$. When taking the wedge product we first apply this projection.

In details, let $\alpha$ be a compatible  $r$-cochain with coefficients in $W_k$ and
$\gamma=\sum \beta\sigma$
be a tropical $q$-chain with coefficients in $\F_p$. For each singular simplex $\sigma$ we denote by $\sigma_{0\dots r}$ its first $r$-face (spanned by the first $r+1$ vertices of $\sigma$) and by $\sigma_{r\dots q}$ its last $(q-r)$-face. Then we set
\begin{equation}\label{eq:cap_product}
\alpha\cap \gamma = \sum  (\alpha(\sigma_{0\dots r})\wedge \beta) \sigma_{r\dots q} .
\end{equation}
Here we push the value of $\alpha$ at the face $\sigma_{0\dots r}$ to the simplex $\sigma$ with the sheaf map and
then push the value of the result from $\sigma$ to $\sigma_{r\dots q}$ using the cosheaf map.

We will need the following local observation. Let us assume we live in a single chart $U_\alpha$.

\begin{lemma}\label{lemma:wedge} 
Let $\s'\prec \s$ be a pair of adjacent strata in $U_\alpha$. Then the diagram 
$$\xymatrix{
W_k(\s)  \otimes \F_p(\s) \ar@<2pc>[d]^{\iota} \ar[r]^--{\wedge} & \F_{p+k}(\s)  \otimes \R\ar[d]^{\iota}\\
W_k(\s') \ar@<2pc>[u]^{\pi} \otimes \F_p(\s')  \ar[r]^--{\wedge} & \F_{p+k}(\s')  \otimes \R
}$$
is commutative in the sense that for any $\alpha\in W_k(\s')$ and $\beta \in \F_p(\s)$ one has $\iota( \pi (\alpha) \wedge \beta)= \alpha \wedge \iota (\beta)$. 
\end{lemma}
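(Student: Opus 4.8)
\emph{Proof plan.} The plan is to strip the statement down to a single‑vector identity in the tangent spaces and then read it off from coordinates in the chart $U_\alpha$. Throughout, write $\overline{(\,\cdot\,)}$ for the projection $W\to W'\subset T$ along divisorial directions that precedes every wedge product, so that the top arrow sends $\pi(\alpha)\otimes\beta$ to $\overline{\pi(\alpha)}\wedge\beta\in\Lambda^{p+k}T(\s)$ and the bottom arrow sends $\alpha\otimes\iota(\beta)$ to $\overline{\alpha}\wedge\iota(\beta)\in\Lambda^{p+k}T(\s')$. By bilinearity it suffices to treat a decomposable $\alpha=w_1\wedge\cdots\wedge w_k$ with $w_i\in W(\s')$ and a single generator $\beta$ of $\F_p(\s)$.

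The first step is to exploit that $\iota\colon T(\s)\to T(\s')$ is a \emph{linear} map: by \eqref{eq:strata_maps} it is the inclusion of tangent spaces when $\s$ and $\s'$ have the same sedentarity, and the projection along the divisorial directions indexed by $I(\s')\setminus I(\s)$ otherwise. Hence the induced map $\Lambda^\bullet T(\s)\to\Lambda^\bullet T(\s')$ is a homomorphism of exterior algebras, and its restrictions to $\F_p$ and $\F_{p+k}$ are the maps called $\iota$ in the diagram. Multiplicativity then gives
\[
\iota\bigl(\overline{\pi(\alpha)}\wedge\beta\bigr)=\iota\bigl(\overline{\pi(\alpha)}\bigr)\wedge\iota(\beta),
\]
so the lemma reduces to the identity $\iota(\overline{\pi(\alpha)})=\overline{\alpha}$ in $\Lambda^kT(\s')$ and, by multiplicativity once more, to the single‑vector identity
\[
\iota\bigl(\overline{\pi(w)}\bigr)=\overline{w}\qquad\text{in }T(\s'),\quad w\in W(\s').
\]

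For this last identity I work in the ambient $\T^N$ of the chart. Put $I=I(\s)$ and $I'=I(\s')$; since $\s'$ lies on the boundary of $\s$ one has $I\subseteq I'$. The spaces $W(\s)$ and $W(\s')$ are the linear spans of the respective parent faces inside $\R^N$, and $\pi$ is literally the inclusion $W(\s')\subseteq W(\s)$. By the description of parent faces near $\T_I$ the divisorial subspace $W^{div}(\s)=\langle e_j:j\in I\rangle$ is contained in $W(\s)$, so the projection $\overline{(\,\cdot\,)}\colon W(\s)\to W'(\s)\subset\T^\circ_I\cong\R^{N-|I|}$ is the restriction of the coordinate quotient $p_I\colon\R^N\to\R^N/\langle e_j:j\in I\rangle$; likewise $\overline{(\,\cdot\,)}\colon W(\s')\to W'(\s')$ is the restriction of $p_{I'}$, while $\iota\colon\R^{N-|I|}\to\R^{N-|I'|}$ is the further quotient killing the directions $I'\setminus I$. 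Since these coordinate maps compose as $p_{I'}=\iota\circ p_I$ and $\pi$ is an inclusion, for $w\in W(\s')\subseteq\R^N$ we obtain $\iota(\overline{\pi(w)})=\iota(p_I(w))=p_{I'}(w)=\overline{w}$, as needed. When $I=I'$ this degenerates: $\iota$ is the inclusion $T(\s)\hookrightarrow T(\s')$ and the identity is immediate.

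The only real content is this geometric bookkeeping, and I expect it to be the main obstacle: one must be confident that $\pi$ is a genuine inclusion of subspaces of the common ambient $\R^N$, that both passages $W\to W'$ as well as $\iota$ are coordinate projections along the appropriate divisorial directions, and hence that $p_{I'}=\iota\circ p_I$. Naturality of the differentials (Proposition \ref{prop:differentials}) guarantees that this verification does not depend on the chosen chart, so the diagram commutes intrinsically.
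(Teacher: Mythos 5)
Your proof is correct and takes essentially the same approach as the paper: the paper's entire proof is the one-line observation that the wedge product is bilinear/natural with respect to the inclusion and quotient homomorphisms involved ($\pi$, $\iota$, and the divisorial projections), which is precisely what your reduction via multiplicativity of $\iota$ to the single-vector identity $\iota(\overline{\pi(w)})=\overline{w}$ makes explicit. The coordinate bookkeeping $p_{I'}=\iota\circ p_I$ you carry out is exactly the detail the paper leaves implicit.
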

\begin{proof}
The wedge product is bilinear with respect to inclusion and quotient (in fact, all) homomorphisms  between free abelian groups.
\end{proof}

\begin{proposition}
For each $r\le q$ the cap product \eqref{eq:cap_product} descends to a natural bilinear map in homology
$$\cap: H^r (X; \W_k) \otimes H_{q}(X; \F_p \otimes \R)  \to H_{q-r}(X; \F_{p+k} \otimes \R).$$
\end{proposition}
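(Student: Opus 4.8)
The plan is to verify the chain-level graded Leibniz identity for the cap product \eqref{eq:cap_product} and then deduce the statement by the same formal reasoning as for ordinary singular (co)homology. Concretely, I will show that for a compatible $r$-cochain $\alpha$ with coefficients in $\W_k$ and a tropical $q$-chain $\gamma$ with coefficients in $\F_p$ there are signs depending only on $r$ with
$$
\dd(\alpha\cap\gamma)=(-1)^{r}\bigl(\alpha\cap\dd\gamma-(\delta\alpha)\cap\gamma\bigr),
$$
where $\dd$ and $\delta$ are the tropical boundary and coboundary (the singular operators followed by the cosheaf maps \eqref{cosheafmap}, respectively preceded by the dual sheaf maps). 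Granting this, if $\alpha$ is a cocycle and $\gamma$ a cycle then $\dd(\alpha\cap\gamma)=0$, so $\alpha\cap\gamma$ represents a class in $H_{q-r}(X;\F_{p+k}\otimes\R)$; and replacing $\alpha$ by $\alpha+\delta\beta$ (resp. $\gamma$ by $\gamma+\dd\eta$) alters $\alpha\cap\gamma$ by $\pm\dd(\beta\cap\gamma)$ (resp. by $\pm\dd(\alpha\cap\eta)$), a boundary, since the other Leibniz term vanishes. Hence the cap product descends to the asserted bilinear map.

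It remains to establish the Leibniz identity, and here is where the tropical coefficients enter. First I would fix a single singular simplex $\sigma$ with vertices $0,\dots,q$ lying in one chart $U_\sigma$, write the coefficient $\beta=\beta_\sigma\in\F_p(\s_\Delta\cap U_\sigma)$, and expand $\dd\bigl((\alpha(\sigma_{0\dots r})\wedge\beta)\,\sigma_{r\dots q}\bigr)$ using the singular boundary $\dd\sigma_{r\dots q}=\sum_{j=r}^q(-1)^{j-r}\sigma_{r\dots\hat\jmath\dots q}$ followed by the cosheaf map $\iota$ on the $\F_{p+k}$ factor. The terms omitting a vertex $j\in\{r+1,\dots,q\}$ assemble, after applying $\iota$ to the coefficient, into $\alpha\cap\dd\gamma$; the terms coming from the front faces $\sigma_{0\dots\hat\imath\dots r}$ of $\sigma_{0\dots r}$ with $i\in\{0,\dots,r-1\}$, together with the dual sheaf maps $\pi$ built into $\delta\alpha$, assemble into $(\delta\alpha)\cap\gamma$; and the two contributions of the shared vertex $r$ cancel exactly as in the classical Alexander--Whitney computation. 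The signs are inherited verbatim from that computation, since the coefficient maps are homomorphisms and do not affect them.

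The main obstacle, and the only step beyond bookkeeping, is matching the coefficients of these two groups of terms. A back-face term carries $\iota\bigl(\alpha(\sigma_{0\dots r})\wedge\beta\bigr)$, while the front-face terms are meant to assemble into $(\delta\alpha)\cap\gamma$, whose coefficients are built from $\alpha$ pushed by the sheaf map $\pi$ on the $W_k$ factor and $\beta$ pushed by the cosheaf map $\iota$ on the $\F$ factor. Because $\pi$ and $\iota$ act on opposite tensor factors, the wedge need not a priori commute with the passage from a stratum $\s$ to an adjacent one $\s'$. This compatibility is exactly Lemma \ref{lemma:wedge}: for $\alpha\in W_k(\s')$ and $\beta\in\F_p(\s)$ one has $\iota(\pi(\alpha)\wedge\beta)=\alpha\wedge\iota(\beta)$, which lets me rewrite each term so that the front contributions become $(\delta\alpha)\cap\gamma$ and the back contributions become $\alpha\cap\dd\gamma$. (The projection along divisorial directions used to define $\wedge$ at sedentary points is part of this bilinear map and is subsumed by the same lemma.) With Lemma \ref{lemma:wedge} applied termwise the Leibniz identity follows; compatibility across charts is automatic since $\alpha$, $\gamma$, and the maps $\iota$, $\pi$, $\wedge$ are all chart-compatible by construction.
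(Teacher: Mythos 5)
Your proof is correct and follows essentially the same route as the paper's: establish the chain-level Leibniz formula for the cap product \eqref{eq:cap_product} and invoke Lemma \ref{lemma:wedge} to reconcile the wedge products, which are taken in different strata and pushed to a common target via $\iota$ and $\pi$. The only difference is a harmless sign convention in the Leibniz identity and that you spell out the Alexander--Whitney bookkeeping, which the paper leaves as ``the usual Leibnitz formula.''
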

\begin{proof}
The statement follows at once from the usual Leibnitz formula 
$$(-1)^r \dd(\alpha \cap \gamma)=   (\delta \alpha) \cap \gamma + \alpha \cap \dd \gamma.
$$
Note that the wedge products in $\delta(\alpha \cap \gamma)$ and $(\partial \alpha) \cap \gamma$ are taken in $\sigma$ and then pushed to $\F_{p+k} (\sigma_{r\dots \hat i \dots q})$. On the other hand the wedge products in $\alpha \cap \delta \gamma$ are taken in $\sigma_{0\dots \hat i \dots q}$ and then pushed to $\F_{p+k} (\sigma_{r\dots \hat i \dots q})$. But Lemma \ref{lemma:wedge} allows us to identify the results.
\end{proof}

\subsection{The group $H^1(X;\W_1\otimes\R)$ and deformations of the tropical structure of $X$}
In this section we assume that $X$ is compact. Recall that $X$ has a covering by charts $\phi_\alpha: U_\alpha \to Y_\alpha \subset \T^{N_\alpha}$. The transition maps on the overlaps are given by integral affine maps $\psi_{\alpha\beta}: \T^{N_\alpha} \dashrightarrow \T^{N_\beta}$.

As a topological space $X$ can be presented as the quotient of the disjoint union of its covering sets $\bigsqcup_\alpha (U_\alpha)$ by the following equivalence relation. We say two points $x\in U_\alpha$ and $y\in U_\beta$ are equivalent if $\psi_{\alpha\beta} \circ \phi_\alpha(x)=\phi_\beta(y)$. Reflexivity of equivalence says that $\psi_{\alpha\beta} = \psi_{\beta\alpha}^{-1}$ (as partially defined maps). Transitivity translates as the cocycle condition, or as the composition rule, $\psi_{\beta\gamma}\circ \psi_{\alpha\beta} = \psi_{\alpha\gamma}$.

Conversely, given open subsets $\phi_\alpha(U_\alpha)\subset Y_\alpha \subset \T^{N_\alpha}$ and a collection of integral affine maps $\psi_{\alpha\beta}$ satisfying $\psi_{\alpha\beta} = \psi_{\beta\alpha}^{-1}$ and $\psi_{\beta\gamma}\circ \psi_{\alpha\beta} = \psi_{\alpha\gamma}$ we can define a topological space $X$ as the quotient of $\bigsqcup_\alpha \phi_\alpha(U_\alpha)$ by the equivalence given by the $\psi$'s. 

$X$ will be a tropical space provided all subsets $\phi_\alpha(U_\alpha)$ remain open in the quotient and $X$ satisfies the finite type condition. Moreover we will get an isomorphic tropical space if the $\psi_{\alpha\beta}$ are changed by a "coboundary" (twisted by automorphisms $\psi_\alpha: \T^{N_\alpha} \dashrightarrow \T^{N_\alpha}$ for some $\alpha$).

Let $\tau$ be a class in $H^1(X;\W_1)$. We can assume that the covering $\{U_\alpha\}$ is fine enough so that $\tau$ can be represented by a \v Cech 1-cocycle $\tau_{\alpha\beta}\in W (U_\alpha \cap U_\beta)$. We can also assume that all $U_\alpha$ and $U_\alpha\cap U_\beta$ are connected. Then $W (U_\alpha \cap U_\beta)$ consists of vectors parallel to all mobile strata in $U_\alpha \cap U_\beta$. We can think of $W (U_\alpha \cap U_\beta)$ as a subspace in $\R^{N_\alpha} \subset \T^{N_\alpha} $ via the map $\phi_\alpha$, or in $\R^{N_\beta} \subset \T^{N_\beta}$ via $\phi_\beta$.

By shrinking the $U_\alpha$ if necessary it will also be convenient to assume that slightly larger open subsets $V_\alpha\supset \overline{U_\alpha}$ do not contain any new strata other than those already in the $U_\alpha$. 
For instance if $X$ is polyhedral we can take $U_\alpha$ to be the open stars of vertices, "shrunk" a little bit.

Now for $\epsilon >0$ we modify the overlapping maps $\psi_{\alpha\beta}: \T^{N_\alpha} \dashrightarrow \T^{N_\beta}$ by precomposing them with the translation by $\epsilon\tau_{\alpha\beta}$. Since $\tau_{\alpha\beta}=-\tau_{\beta\alpha}$ the new relation is reflexive. Also since $\tau$ is a cocycle the new maps $\psi^{\epsilon\tau}_{\alpha\beta}$ satisfy the composition rule. Thus they define a new equivalence relation and we call the corresponding quotient space $X_{\epsilon\tau}$ the deformation of $X$.

\begin{proposition}
For $\epsilon >0$, small enough,  $X_{\epsilon\tau}$ is a tropical space.
\end{proposition}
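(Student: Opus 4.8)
The plan is to verify directly the two defining requirements of a tropical space (Definition \ref{def-tropspace}) for the quotient $X_{\epsilon\tau}$: that it carries a finite cover of compatible charts to balanced polyhedral complexes regular at infinity, and that it satisfies the finite type condition. The essential observation is that the local models $Y_\alpha$ and the chart images $\phi_\alpha(U_\alpha)$ are left untouched by the construction; only the transition data is perturbed, and each new transition map $\psi^{\epsilon\tau}_{\alpha\beta}$ is a translation precomposed with $\psi_{\alpha\beta}$, hence again integral affine. Consequently the balancing condition and regularity at infinity of each $Y_\alpha$ are inherited for free, while compatibility of charts on triple overlaps is exactly the rule $\psi^{\epsilon\tau}_{\beta\gamma}\circ\psi^{\epsilon\tau}_{\alpha\beta}=\psi^{\epsilon\tau}_{\alpha\gamma}$, already established from $\tau$ being a cocycle together with $\tau_{\alpha\beta}=-\tau_{\beta\alpha}$. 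Thus the whole burden of the proof reduces to the two point-set statements singled out just before the proposition: that for small $\epsilon$ each $\phi_\alpha(U_\alpha)$ remains open in the quotient (so the $\phi_\alpha$ are genuine open charts), and that the finite type condition survives the deformation.

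First I would exploit the fact that $\tau_{\alpha\beta}$ lies in $W(U_\alpha\cap U_\beta)$, i.e.\ is parallel to every mobile stratum meeting $U_\alpha\cap U_\beta$. This is what distinguishes a wave-vector translation from an arbitrary one: translation by $\epsilon\tau_{\alpha\beta}$ preserves the combinatorial stratification, moving points within their strata rather than across them. In particular, for small $\epsilon$ the perturbed map $\psi^{\epsilon\tau}_{\alpha\beta}$ still carries $\phi_\alpha(U_\alpha\cap U_\beta)$ into $Y_\beta$, inducing on each stratum an honest homeomorphism; no stratum is collapsed or split, so the underlying combinatorial type of $X_{\epsilon\tau}$ coincides with that of $X$ and the sheaves $\F_p$, $\W_k$ remain consistently glued.

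The smallness of $\epsilon$ then enters through the buffer sets $V_\alpha\supset\overline{U_\alpha}$, chosen so that $V_\alpha$ contains no strata beyond those already present in $U_\alpha$. For $\epsilon$ small the perturbed image $\psi^{\epsilon\tau}_{\alpha\beta}(\phi_\alpha(U_\alpha\cap U_\beta))$ is a small deformation of $\phi_\beta(U_\alpha\cap U_\beta)$ and therefore stays inside $\phi_\beta(V_\beta)$; since $V_\beta$ introduces no new combinatorial pieces, the deformation cannot create spurious identifications coming from $\partial U_\beta$, and $\psi^{\epsilon\tau}_{\alpha\beta}$ remains defined on the relevant overlap. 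Because the cover is finite and each $\overline{U_\alpha}$ is compact (here compactness of $X$ is used), I can select a single $\epsilon_0>0$, uniform over all pairs $\alpha,\beta$, for which these containments hold simultaneously; for $\epsilon<\epsilon_0$ the quotient map restricts to an open embedding on each $\phi_\alpha(U_\alpha)$ and the quotient stays Hausdorff. The finite type condition then reduces to the corresponding statement for $X$: it is a property of each individual unchanged chart image $\phi_\alpha(U_\alpha)$ together with convergence in the quotient, and the stratum-preserving perturbation identifies convergence in $X_{\epsilon\tau}$ with convergence in $X$.

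The step I expect to be the main obstacle is precisely the uniform openness of the charts in the deformed quotient — ruling out that arbitrarily small perturbations of the gluing create new identifications along the boundaries $\partial U_\alpha$ and so destroy the open-embedding property. This is where the three ingredients must be orchestrated together: the wave (stratum-parallel) nature of $\tau_{\alpha\beta}$, which keeps the deformed images inside the correct faces of $Y_\beta$; the buffer regions $V_\alpha$, which guarantee that no boundary stratum is within reach; and the finiteness of the cover with compactness of $X$, which converts the family of pairwise smallness thresholds into a single $\epsilon_0$. Once the gluing is controlled at the boundaries, the remaining verification of the tropical-space axioms is routine and formal.
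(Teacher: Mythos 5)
Your proposal is correct and follows essentially the same route as the paper's own proof: reduce everything to the openness of the chart overlaps in the quotient plus the finite type condition, then use the buffer sets $V_\alpha\supset\overline{U_\alpha}$ (containing no new strata) together with smallness of $\epsilon$ to secure both. Your write-up merely fills in details the paper leaves terse, such as the stratum-parallel nature of $\tau_{\alpha\beta}$ and the extraction of a uniform $\epsilon_0$ from finiteness of the cover.
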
 
\begin{proof}
We only need to show that $X_{\epsilon\tau}$ is of finite type and each of the $U_\alpha$ is still an open subset in $X_{\epsilon\tau}$. For the latter it is enough to show that each $U_\alpha\cap U_\beta$ is open. But this is clear since by condition that slight enlargements of  $U_\alpha$ contain no new strata, no new strata can appear in $U_\alpha\cap U_\beta$ for small enough $\epsilon$. The argument for the finite type condition is similar.  
\end{proof}

The deformed tropical space is especially easy to visualize in the polyhedral case. Namely, $X_{\epsilon\tau}$
has the same combinatorial face structure, but the faces of $X_{\epsilon\alpha}$ themselves
may have different shapes and sizes. E.g. if $X$ is 1-dimensional, the lengths of the edges of $X$ and $X_{\epsilon\alpha}$ may be different.


\section{Straight classes}
\subsection{Straight cycles in tropical homology} 
We start with a natural generalization of balanced polyhedral complexes in $\T^N$
to a situation where a facet can have a weight. 
A weighted balanced polyhedral complex $Y\subset\T^N$ is a union of a finite number of facets $D$
as before, but now each $D$ is enhanced with an integer weight $w(D)\in\Z$
subject to the following weighted balancing condition
for every $(n-1)$-dimensional mobile face  $E\subset Y$.
As in \eqref{bal-cond}  we consider all facets $D_1,\dots,D_l\subset \T^{N}$ adjacent to $E$
and take the quotient of $\R^{N}$ by the linear subspace parallel to $E^\circ$,
the non-infinite part of $E$. The weighted balanced condition is
\begin{equation}\label{wbal-cond}
\sum\limits_{k=1}^l w(D_k)\epsilon_k=0.
\end{equation}
We say that the weighted balanced complex $Y$ is 
{\em effective} if the weights of all its facets are positive.

Just as balanced polyhedral complexes form local models for tropical spaces,
effective weighted balanced polyhedral complexes form models for
{\em weighted tropical spaces}.

\begin{definition}
A weighted tropical space is a topological space $X$ enhanced with a weight function
$w:X\dashrightarrow\N$ defined on an open dense set $A\subset X$ and a sheaf $\OO_X$
of functions to $\T$ such that there exists a finite covering of compatible charts
$\phi_\alpha: U_\alpha\to Y_\alpha \subset  \T^{N_\alpha}$ with the following properties.
\begin{itemize}
\item $Y_\alpha$ is an effective weighted balanced polyhedral complex in $\T^{N_\alpha}$.
\item For the relative interior $D^\circ$ of any facet $D\subset Y_\alpha$ we
have $\phi_\alpha^{-1} (D^\circ)\subset A$ while the weight function $w$ is constant
on  $\phi_\alpha^{-1}(D^\circ)$ and equal to the weight of $D$.
\item
For each facet $D\subset Y_\alpha$ there exists a $\Z$-linear transformation
$\Phi_D:\Z^{N_\alpha}\to \Z^{N_\alpha}$ of determinant $w(D)$
such that $\OO_X|_{D^\circ\cap U_\alpha}$ is induced by $\Phi_D^{-1}\circ\phi_\alpha$.
\end{itemize}
\end{definition}

We may reformulate the last condition of this definition by saying
that each facet $D$ comes with a sublattice of index $w(D)$ of
the tangent lattice $T_{\Z}(x)$, $x\in D$. This sublattice is locally
constant and does not depend on the choice of charts.
Note that not every weighted balanced polyhedral complex in $\T^N$
is a weighted tropical space in this sense as it is not always possible
to consistently choose such a sublattice.
However no such sublattice for the facets of $Z$ is needed for the following definition.

\begin{definition}[cf. \cite{MR}, \cite{Shaw}]
Let $X$ be a tropical space. 
A subspace $Z\subset X$ enhanced with a weight function
$$w:Z\dashrightarrow\Z$$ defined on an open dense set $A\subset Z$
is called a {\em straight tropical $p$-cycle} if for every chart
$\phi_\alpha:U_\alpha\to Y_\alpha\subset\T^{N_\alpha}$ of $X$
there exists a weighted $p$-dimensional balanced polyhedral complex $Z_\alpha\subset\T^{N_\alpha}$
such that $\phi_\alpha: Z\cap U_\alpha \to Z_\alpha$ is an open embedding, and
for the relative interior $D^\circ$ of any facet $D\subset Z_\alpha$ we
have $\phi_\alpha^{-1} (D^\circ)\subset A$. The weight function $w$ is constant
on  $\phi_\alpha^{-1}(D^\circ)$ and equal to the weight of $D$.
\end{definition}

\begin{proposition}\label{specialfundclass}
Each straight tropical $p$-cycle $Z\subset X$ gives rise to a canonical element
$[Z]\in H_{p,p}(X)$ in the tropical homology group of $X$.
\end{proposition}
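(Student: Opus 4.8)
The plan is to construct the homology class $[Z]$ cell by cell and then verify that the resulting chain is a cycle, with coefficients landing in the correct subgroup $\F_p$.

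First I would set up the local picture. Choose a polyhedral structure on $X$ fine enough that $Z$ is a subcomplex, so that $Z$ is a union of $p$-dimensional faces $\Delta$ of $X$, each carrying an integer weight $w(\Delta)$. For each such facet $\Delta$ of $Z$, the tangent space $T_\Z(\Delta)$ contains a distinguished rank-$p$ sublattice spanned by the primitive integer vectors along $\Delta$; an orientation of $\Delta$ together with these vectors $v_1,\dots,v_p$ gives a generator $v_1\wedge\dots\wedge v_p$ of $\Lambda^p T_\Z(\Delta)$. The essential local point is that this element in fact lies in $\F_p(\Delta)$ rather than merely in $\Lambda^p T_\Z(\Delta)$: by Definition \ref{def-fk}, $\F_p(\Delta)$ is generated by wedges of vectors coming from a single adjacent face of the same sedentarity, and the tangent directions of $\Delta$ itself qualify. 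I would set $\beta_\Delta = w(\Delta)\,(v_1\wedge\dots\wedge v_p)\in\F_p(\Delta)$ and form the cellular $p$-chain
$$
[Z] := \sum_{\Delta} \beta_\Delta\,\Delta \in C^{cell}_p(X;\F_p).
$$

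Next I would show $\dd[Z]=0$. The boundary map \eqref{homcelld1}--\eqref{homcelld2} restricts, on each codimension-one face $E\prec\Delta$, to the incidence number times the cosheaf map $\iota:\F_p(\Delta)\to\F_p(E)$ applied to $\beta_\Delta$. There are two cases. For a mobile face $E$ of the same sedentarity, the contributions from the various facets $\Delta_k$ of $Z$ adjacent to $E$ are governed precisely by the weighted balancing condition \eqref{wbal-cond}: after projecting along $E$, each $\iota(\beta_{\Delta_k})$ becomes $w(\Delta_k)$ times the wedge of $E$'s tangent frame with the outward primitive vector $\epsilon_k$, and $\sum_k w(\Delta_k)\epsilon_k=0$ forces the total to vanish. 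For a face $E$ of higher sedentarity, the map $\iota$ is projection along the divisorial directions, and one uses that $Z_\alpha$ is itself regular at infinity so that $Z$ near $\T_I$ is the preimage under $\pi_I$ of its sedentary part; this again matches up the contributions so that they cancel. Hence $[Z]$ is a cellular cycle, defining a class in $H^{cell}_{p,p}(X)\cong H_{p,p}(X)$.

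Finally I would check canonicity: the class is independent of the auxiliary polyhedral structure and of the orientations chosen on facets (the sign from reorienting $\Delta$ is absorbed into the sign of the generator $v_1\wedge\dots\wedge v_p$). Independence of the polyhedral structure follows because any two admissible structures have a common refinement, and subdividing a weighted facet replaces its single term by a sum of terms whose weighted frames telescope to the same cellular cycle up to a boundary; this is the standard argument identifying cellular and singular tropical homology, already available to us via the quasi-isomorphism $C^{cell}_\bullet\hookrightarrow C^{bar}_\bullet$. I expect the main obstacle to be the bookkeeping in the boundary computation at \emph{sedentary} faces: one must confirm that the divisorial-projection maps $\iota$ interact correctly with the balancing data, using the structure of the family poset from Proposition \ref{prop:subface} and Lemma \ref{lem:divisible} to ensure no spurious higher-sedentarity terms survive. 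The mobile-face cancellation is essentially the balancing condition verbatim, so the real care lies in the faces at infinity.
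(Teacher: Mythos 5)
Your local computation is essentially the paper's: the coefficient of a facet is its weight times the integral volume element, reversing the orientation flips both signs so the chain is well defined, and the weighted balancing condition \eqref{wbal-cond} is exactly what makes the boundary contributions along mobile codimension-one faces cancel. The genuine gap is in your very first step: you assume that $X$ carries a polyhedral structure fine enough that $Z$ is a subcomplex, and you then work in the cellular complex $C^{cell}_\bullet(X;\F_p)$. Neither assumption is available in the generality of the statement. A tropical space need not admit any polyhedral structure (the paper stresses this at the start of its subsection on polyhedral structures), and the cellular theory is set up in the paper only for $X$ compact and polyhedral; moreover, even when $X$ is polyhedral, refining its face structure so that a given straight cycle $Z$ becomes a subcomplex, while preserving the required properties of the charts (e.g.\ regularity at infinity), is an additional claim you would have to prove rather than invoke. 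The paper's proof avoids all of this by never subdividing $X$: it chooses a sufficiently fine \emph{topological} triangulation of $Z$ alone, compatible with the charts and the combinatorial strata, and forms the chain $\gamma_Z=\sum_{\sigma\subset Z} w(\sigma)\Vol_\sigma\,\sigma$ in the singular tropical complex $C_p(X;\F_p)$, which is defined for every tropical space; the class is then observed to be independent of the triangulation. Your cancellation argument carries over verbatim to that singular chain, and this is the correct way to repair the proof.

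One smaller point: your closing worry that ``the real care lies in the faces at infinity'' has it backwards. Along a face of $Z$ of higher sedentarity no cancellation between different facets is needed at all: since the local models $Z_\alpha$ are regular at infinity, the volume element of any facet adjacent to such a face is divisible by the corresponding divisorial vectors, and the map $\iota$ of \eqref{cosheafmap} (projection along the divisorial directions) kills such coefficients outright, so each boundary term vanishes individually --- the same mechanism as in Lemma \ref{lem:divisible}. The only place where the balancing hypothesis is genuinely used is at the mobile codimension-one faces, exactly as in your first case.
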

\begin{proof}
We choose a sufficiently fine (topological) triangulation of $Z=\bigcup \sigma$
so that each $p$-simplex $\sigma$ from the triangulation lies in a single chart $U_\alpha$
and in a single combinatorial stratum $\s$ of $X$.
In particular each $\sigma$ carries the weight $w(\sigma)$ induced from $Z$.
An orientation of $\sigma$ defines the canonical volume element $\Vol_\sigma\in\F^X_p(\sigma)$
given by the generator of $\Lambda^p(W^Z_{\Z}(\sigma))\cong\Z$.
Inverting the orientation of $\sigma$ will simultaneously invert the sign of $\Vol_\sigma$.
Thus the product $\Vol_\sigma \sigma$ is a well-defined tropical chain in $C_p (X;\F_p)$.
Then the weighted balancing condition for $Z$ ensures that 
$$\gamma_Z= \sum_{\sigma\subset Z} w(\sigma) \Vol_\sigma \sigma
$$
is a cycle in $C_p (X;\F_p)$. Its class is clearly independent of the triangulation and gives the desired element $[Z]\in H_p(X;\F_p)=H_{p,p}(X)$.
%
%
\end{proof}

\begin{definition}
Elements of $H_{p,p}$ realised by straight tropical cycles
as in Proposition \ref{specialfundclass} are
called straight homology classes (or, in other existing terminology, 
{\em special} or {\em algebraic}).
They form a subgroup
$$H_{p,p}^{straight}(X) \subset H_{p,p}(X).$$
\end{definition}

\begin{example}
Recall that the tropical $N$-dimensional projective space $\tp^N$ may be obtained
by gluing $N+1$ affine charts $\T^N$ with the help of integral affine maps, cf. e.g. \cite{MR}.
A topological subspace $X\subset\tp^N$ is called a {\em projective
tropical space} if the intersection of $X$ with any such chart is a balanced polyhedral complex.
A projective tropical space has a non-trivial straight homology class
$$[H^X_p]\in H_{p,p}^{straight}(X)$$
(called the {\em hyperplane section}) in any dimension $p=0,\dots,n=\dim X$.

To see this we start from the case $X=\tp^N$.
Consider the equations $x_{j}=c_j$, $j=p+1,\dots,n$, $c_j\in\R$,
in a chart $\T^N\subset\tp^{N}$. They define a $p$-dimensional
linear space parallel to a coordinate plane.
We may take for $H_p$ the topological closure of this linear space in $\tp^N$.
Clearly, the homology class $[H_p]$ does not depend on the choice of
the $\T^N$-chart or on permutation of coordinates in this chart. Furthermore,
$H_0$ is a point and thus $[H_0]\neq 0$ in $H_{0,0}(\tp^N)\cong \Z$. Note that this also
implies that $[H_p]\neq 0$ in $H_{p,p}(\tp^N)$ as we may choose the transverse
representatives $H_p$ and $H_{p'}$ so that $H_p\cap H_{p'}=H_{p+p'-N}$,
cf. \cite{Shaw}. It is easy to show that any element of
$H_{*,*}(\tp^N)$ is generated by $[H_p]$, $p=0,\dots,N$.

A similar construction can be made for general projective tropical spaces $X\subset \tp^N$.
We take $H^X_p=H_{N+p-n}\cap X$ where $H_{N+p-n}$ is chosen to be transverse to $X$
with the help of translations in $\R^N$. But in addition to those hyperplane sections
and their powers $H_{*,*}(X)$ may have additional, more interesting, straight classes. 
\end{example}

\subsection{Straight cowaves}
A notion of straight classes exists also for cowaves.
Once again, let $Z\subset X$ be a subspace such that 
each chart $\phi_\alpha$ takes $Z$ to a $q$-dimensional polyhedral complex in $\T^{N_\alpha}$
(which we no longer assume balanced). We refer to such subspace of $X$ as a {\em straight subspace}.

In this subsection we assume that 
$\dim W(x)=m$ for some $m$ almost everywhere on $Z$.
In other words we assume that each open facet of $Z$ sits
in the $m$-skeleton of $X$, but outside of the $(m-1)$-skeleton of $X$.
We call such straight subspaces $Z$ 
{\em purely $m$-skeletal}. E.g. $Z$ is $n$-skeletal if no open facets of $Z$ intersect
$\operatorname{Sk}_{n-1}(X)$.

\begin{definition}
A {\em coweight function} on $Z$ is a function
$$x \mapsto cow(x)\in W^m(x)$$
defined on an open dense set $A\subset Z$. 
Here we assume that $\dim W(x)=m$
whenever $x\in A$, so we have $W^m(x)\approx\Z$.
\end{definition}
This is a dual notion to the weight function. But while the weight function was integer-valued,
here we do not have a canonical isomorphism between $W^m(x)$ and $\Z$, 
it is only canonical up to sign.

Let $x\in A$ be inside of a facet of $Z$ parallel to a $q$-dimensional affine space $L$ (in a chart $\phi_\alpha$).
As in the previous subsection, we may
consider the volume element $\Vol_L\in W_q(x)$ which is well-defined by the integer lattice in $L$
and a choice of orientation of $L$.
Given this choice we have a well-defined map
$$\lambda\mapsto cow(x)(\lambda\wedge\Vol_L),$$
$\lambda\in W_{m-q}(x)$ and thus an element in $W^{m-q}(x)$,
a group that depends only on the open facet of $Z$ containing $x$.
Thus any $q$-simplex $\sigma$ embedded to the same facet and parallel to $L$
defines a canonical chain with coefficients in $W^{m-q}(x)$.

In particular, a triangulation of a coweighted purely $m$-skeletal $q$-dimensional polyhedral pseudocomplex $Y$
gives rise to a cowave chain in $C_q(X;\W^{m-q})$.
Such cowave chains are called {\em straight}.

As in Proposition \ref{specialfundclass} we may associate a singular chain
with the coefficients in $W^{m-q}$ to $Z$ by using a combinatorial stratification of $Z$.
\begin{definition}
A coweighted straight subspace $Z\subset X$ is called {\em cobalanced} if 
the resulting chain is a cycle. We may refine this into a local notion by saying
that $Z$ is cobalanced at $x\in Z$ if $x$ is disjoint from the support
of the boundary of the resulting special cowave cochain.
\end{definition}

Note that once an orientation of $W(x)$ is chosen we may identify
coweight and weight at $x$. 

\begin{proposition}
Suppose that a $q$-dimensional coweighted straight subspace $Z$
is purely $m$-skeletal and that $x\in Z$ belongs to a relative interior of
a $(q-1)$-dimensional face (in a chart) with $\dim W(x)=m$.
Then $Z$ is cobalanced at $x$ if and only if $Z$ is balanced at $x$.
\end{proposition}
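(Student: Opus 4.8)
The plan is to reduce the statement to a local computation in the single chart $\phi_\alpha$ containing $x$, and then to a linear-algebra identity that expresses the cowave boundary at the face through $x$ as the contraction of the balancing sum with a fixed volume form. Fix the chart so that $x$ lies in the relative interior of a $(q-1)$-dimensional face $E$ of the polyhedral complex $Z_\alpha\subset\T^{N_\alpha}$, and let $D_1,\dots,D_l$ be the facets of $Z_\alpha$ adjacent to $E$. Writing $\gamma$ for the cowave chain in $C_q(X;\W^{m-q})$ attached to $Z$, the coefficient of $\partial\gamma$ at $E$ receives contributions only from these facets, so cobalancedness at $x$ is exactly the vanishing of that coefficient. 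Here the hypothesis $\dim W(x)=m$ does the essential work: since $\dim W$ equals $m$ on every $D_k$ (because $Z$ is purely $m$-skeletal) and also at $x$, the face $E$ together with all adjacent facets lies in a single combinatorial stratum of $X$. Consequently the wave tangent space $W:=W(x)$, its lattice, and a chosen orientation are canonically identified across these cells, and every cosheaf map $\hat\pi$ occurring in $\partial\gamma$ at $E$ (cf. \eqref{Warrow}) is the identity. This is the step I expect to require the most care to state cleanly, since it is precisely where the dimension hypothesis is used to kill the otherwise nontrivial stratum maps.

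Next I would fix an orientation of $W\cong\R^m$, which by the remark following the definition of cobalancing turns the coweight into an honest weight: writing the volume generator as $\omega\in\Lambda^mW^*$, the coweight on $D_k$ becomes $cow_k=w_k\,\omega$ with $w_k\in\Z$, and ``$Z$ balanced at $x$'' means $\sum_k w_k\epsilon_k=0$ in $W/L_E$, where $L_E\subset W$ is the $(q-1)$-dimensional span of $E$ and $\epsilon_k$ is the outward primitive vector of the image of $D_k$. By definition the cowave coefficient on $D_k$ is $c_k=w_k\,\iota_{\Vol_{D_k}}\omega\in\Lambda^{m-q}W^*$. Choosing $\Vol_{D_k}=\Vol_E\wedge v_k$, with $v_k$ a lattice vector pointing from $E$ into $D_k$ and projecting to $\epsilon_k$ in $W/L_E$, the interior-product identity $\iota_{\Vol_E\wedge v_k}\omega=\iota_{v_k}\bigl(\iota_{\Vol_E}\omega\bigr)$ gives $c_k=w_k\,\iota_{v_k}\eta$, where $\eta:=\iota_{\Vol_E}\omega$ is a fixed form that kills $L_E$ and hence descends to a top-degree volume form on the $(m-q+1)$-dimensional quotient $W/L_E$. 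In particular $\iota_{v_k}\eta$ depends only on $\epsilon_k$.

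Then I would assemble the boundary. With each $D_k$ oriented as $\Vol_E\wedge v_k$, the incidence number of $E$ in $\partial D_k$ is the same sign $\varepsilon_0$ for every $k$, so that
$$\partial\gamma\big|_E=\varepsilon_0\sum_{k=1}^l c_k=\varepsilon_0\,\iota_{\left(\sum_{k=1}^l w_k\epsilon_k\right)}\eta.$$
Since $\eta$ is a volume form on $W/L_E$, the contraction $\bar v\mapsto\iota_{\bar v}\eta$ on $W/L_E$ is injective; hence the left-hand side vanishes if and only if $\sum_k w_k\epsilon_k=0$. The former says $Z$ is cobalanced at $x$ and the latter that $Z$ is balanced at $x$, yielding the equivalence in both directions simultaneously. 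The remaining obstacle is purely bookkeeping: verifying that the incidence sign is indeed common to all $k$, and checking that lattice-primitivity of $\epsilon_k$ in the quotient lattice $W_\Z/(L_E\cap W_\Z)$ is exactly what the factorization $\Vol_{D_k}=\Vol_E\wedge v_k$ encodes, so that $\iota_{v_k}\eta$ reproduces the outward primitive data of the balancing condition.
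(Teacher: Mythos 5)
Your proof is correct and follows essentially the same route as the paper: the decisive step in both is that "purely $m$-skeletal" together with $\dim W(x)=m$ places $x$ and its whole neighborhood in $Z$ in a single combinatorial stratum of $X$, so that $\W^m|_Z$ is locally trivial near $x$ and one orientation of $W(x)$ converts the coweight into an honest weight for all adjacent facets simultaneously. The paper stops at that point, treating the equivalence of balancing and cobalancing as immediate after this identification, while you additionally spell out the justifying linear algebra — $c_k=w_k\,\iota_{v_k}\eta$ with $\eta=\iota_{\Vol_E}\omega$ descending to a volume form on $W/L_E$, plus injectivity of contraction against a top form — which is a faithful expansion of what the paper leaves implicit.
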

\begin{proof}
Note that $x$ must belong to the same combinatorial stratum of $X$ as its small
open neighbourhood in $Z$, 
since $Z$ is purely $m$-skeletal and $\dim W(x)=m$. Thus $\W^m|_Z$ is locally trivial near $x$
and we may translate coweights into weights simultaneously for the whole neighbourhood
with the help of an arbitrary orientation of $W(x)$. 
\end{proof}

At the same time if $\dim W(x)<m$ then the cobalancing condition is different from
the balancing condition. We believe that study of straight cowaves might be
useful, particularly in the context of mirror symmetry.

\section{The eigenwave}

\subsection{The eigenwave $\phi$}
There is a canonical element $\phi\in H^1(X;\W_1)$ for every compact tropical space $X$.
Unfortunately, it does not have a preferred representative as a singular wave cocycle
in the case when $X$ has points of positive sedentarity.
Rather we shall represent it in the quotient space $C^1(X;\W_1)/B_{div}^1(X;\W_1)$ where the subspace $B_{div}^1(X;\W_1) \subset C^1(X;\W_1)$ will consist of certain coboundaries. Note that this ambiguity
will not cause us any problems with the cap product of $\phi$ and the homology cycles because, as we will see, taking product with elements in $B_{div}^1(X;\W_1)$ annihilates any singular cycle.

Let $C_{div}^0(X;\W_1) \subset C^0(X;\W_1)$ be the subspace of 0-wave cochains whose values on points $x\in X$ are in $W^{div}(x)$.
We let $B_{div}^1(X;\W_1) \subset C^1(X;\W_1)$ consist of the coboundaries of the cochains from $C_{div}^0(X;\W_1)$.  The elements $\gamma \in B_{div}^1(X;\W_1)$ are characterized by the property that on any singular 1-simplex $\tau$ the values $\gamma(\tau)$ belong to the subspace $W^{div}(\tau) \subset W(\tau)$ spanned by the divisorial subspaces at the boundary points of $\tau$.

We are ready to define the eigenwave class $\phi \in C^1(X;\W_1)/B_{div}^1(X;\W_1)$.
Let us first consider the case when all points of $X$ have zero sedentarity, in particular
$B_{div}^1(X;\W_1)=0$. In such case we define the value of $\phi$ on a singular 1-simplex
$\tau:[0,1]\to X$ as $\tau(1)-\tau(0)$.
Recall that our singular chains are assumed to be compatible with the combinatorial
stratification of $X$ so that $\tau((0,1))$ is contained in a single combinatorial stratum
and a single tropical chart.
This means that the difference $\tau(1)-\tau(0)$ can be interpreted as a vector in the
tangent space to this stratum and therefore in $W(\tau)$.

Returning to the general case, if $x\in X$ is of positive sedentarity
we choose a nearby mobile point $y_x$ which maps to $x$ under the projection
along divisorial directions. If $x\in X$ is mobile we set $y_x=x$.
\begin{definition}
The element $\phi \in C^1(X;\W_1)/B_{div}^1(X;\W_1)$ is defined on a 1-simplex $\tau:[0,1]\to X$
as the vector $w_\tau:= y_{\tau(1)}-y_{\tau(0)} \in W(\tau)$.
\end{definition}
Clearly the ambiguity in $w_\tau$ resulting from different choices of $y_x$ is confined
to $B_{div}^1(X;\W_1)$.
The next proposition asserts that $\phi$ defines a class in $ H^1(X;\W_1)$, which we call the {\em eigenwave} of $X$. We denote this class also by $\phi$, this should not cause any confusion.

\begin{proposition}
$\delta \phi =0$. 
\end{proposition}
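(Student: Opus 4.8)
The plan is to verify the cocycle condition $\delta\phi=0$ directly on the level of the defining formula $\phi(\tau)=y_{\tau(1)}-y_{\tau(0)}$, working modulo the subspace $B_{div}^1(X;\W_1)$. Recall that the differential is $\delta\phi(\sigma)=\phi(\partial\sigma)$ pushed forward via the sheaf maps $\hat\pi$ of \eqref{Warrow}. So first I would take a singular $2$-simplex $\sigma:\Delta^2\to X$ compatible with the combinatorial stratification, with vertices $\sigma(0),\sigma(1),\sigma(2)$, and write $\partial\sigma=\sigma_{12}-\sigma_{02}+\sigma_{01}$, where $\sigma_{ij}$ denotes the edge spanned by vertices $i$ and $j$.

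The heart of the computation is telescoping. Evaluating $\phi$ on each of the three boundary edges gives, in the mobile (zero-sedentarity) case, $\phi(\sigma_{12})=\sigma(2)-\sigma(1)$, $\phi(\sigma_{02})=\sigma(2)-\sigma(0)$, and $\phi(\sigma_{01})=\sigma(1)-\sigma(0)$, so that the signed sum collapses to zero as a vector. The subtlety is that these three values a priori live in three different wave tangent spaces $W(\sigma_{12})$, $W(\sigma_{02})$, $W(\sigma_{01})$, and before summing they must all be pushed into the common space $W(\sigma)$ attached to the interior $2$-stratum of $\sigma$ via the maps $\hat\pi$ dual to \eqref{Warrow}. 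Since $\sigma$ is compatible with the stratification, the open face $\sigma((\Delta^2)^\circ)$ sits in a single stratum $\s_\sigma$, each edge sits in a stratum $\s_{ij}\prec\s_\sigma$, and the differentials $\hat\pi:W^1(\s_\sigma)\to W^1(\s_{ij})$ are induced by the inclusions of linear spans, hence act compatibly with the vector-difference formula. Thus after transporting everything into $W(\sigma)$ the three differences still telescope to $0$, giving $\delta\phi(\sigma)=0$ identically on mobile simplices.

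For the general case one must account for positive sedentarity, and this is where $B_{div}^1$ enters and where the main care is needed. The value $\phi(\tau)=y_{\tau(1)}-y_{\tau(0)}$ depends on the choices of nearby mobile points $y_x$, but by the remark following the definition any two choices differ by an element of $B_{div}^1(X;\W_1)$; so I would fix, once and for all, a single assignment $x\mapsto y_x$ and observe that it is shared coherently across all vertices of $\sigma$. Then the same telescoping identity holds verbatim with $\sigma(i)$ replaced by $y_{\sigma(i)}$: the sum $\bigl(y_{\sigma(2)}-y_{\sigma(1)}\bigr)-\bigl(y_{\sigma(2)}-y_{\sigma(0)}\bigr)+\bigl(y_{\sigma(1)}-y_{\sigma(0)}\bigr)=0$ vanishes on the nose in $W(\sigma)$, because the nearby mobile points all lie in a common chart and a common parent stratum of $\sigma$, so the $\hat\pi$ maps are again inclusions. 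Hence $\delta\phi$ vanishes exactly, not merely modulo $B_{div}^1$, once the $y_x$ are fixed, which shows $\delta\phi=0$ in the quotient $C^1/B_{div}^1$ and therefore that $\phi$ defines a genuine class in $H^1(X;\W_1)$.

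The step I expect to be the main obstacle is the bookkeeping in the positive-sedentarity case: one must check that the divisorial ambiguity in each $y_{\sigma(i)}$ propagates consistently so that $\delta\phi$ lands in $B_{div}^1$ independently of the choices, and that the sheaf maps $\hat\pi$ between strata of differing sedentarity really do send the chosen nearby-mobile difference vectors to the corresponding vectors for the boundary edges without introducing spurious divisorial components. I would handle this by invoking the naturality of the maps $\iota,\pi$ with respect to the overlapping differentials stated at the end of Section \ref{sec:stratification}, together with the characterization of $B_{div}^1$ via values lying in $W^{div}(\tau)$; everything else is the routine telescoping already carried out in the mobile case.
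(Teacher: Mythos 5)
Your proof is correct and takes essentially the same route as the paper's own (one-line) argument: evaluate $\phi$ on the three edges, transport the values into the common space $W(\sigma)$, and observe that the differences telescope to zero, with the fixed choices $y_x$ handling positive sedentarity. The only slip is notational: the transport maps here are the covariant maps $\pi: W_1(\s_{ij})\to W_1(\s_\sigma)$ of \eqref{Warrow}, given by inclusions of linear spans, not their duals $\hat\pi$ acting on $W^1$, since $\phi$ is a cochain valued in the sheaf $\W_1$ rather than the cosheaf $\W^1$.
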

\begin{proof}
By definition the value of $\delta\phi$ on a 2-simplex $\sigma$ is the sum of the values of $\phi$ on the three edges $\tau_1,\tau_2,\tau_3$ of $\sigma$. This is clearly zero (perhaps after applying the maps $\pi: W(\tau) \to W(\sigma)$ in case some of the $\tau_i$ land in different strata).
\end{proof}

\subsection{Action of the eigenwave $\phi$ and its powers on tropical homology}
\label{subsection:wave_action}

The $k$-th cup powers of $\phi$ are also (higher degree) wave classes $\phi^k\in H^k(X;\W_k)$.
One can define the value of $\phi^k$ on a $k$-simplex $\sigma$ modulo the ideal in $W_k(\sigma)$ generated by the $W^{div}$ for all vertices in $\sigma$. Namely, for an edge $\tau\prec \sigma$ let $w_\tau \in W(\sigma)$ stand for the vector $y_{\tau(1)}-y_{\tau(0)}\in W(\tau)$ pushed to $W(\sigma)$.
Then
\begin{equation}\label{eq:phi^k}
\phi^k (\sigma)=w_{\sigma_{01}}\wedge\dots \wedge w_{\sigma_{k-1, k}} =: w_\sigma \in W_k (\sigma).
\end{equation}
Taking the cap product with $\phi^k=[\phi^k_{sing}]$ gives us the homomorphism:
\begin{equation}\label{eq:wave_action}
 \phi^k \cap:  H_q(X;\F_p \otimes \R) \to H_{q-k}(X;\F_{p+k} \otimes \R).
\end{equation}

In case $X$ is compact and polyhedral we consider its baricentric subdivision and think of the $H_q(X; \F_p)$ as simplicial or cellular homology groups. The advantage is that we can define the cap product with $\phi^k$ on the cycle level
\begin{equation}\label{eq:wave_action_cycle}
\phi^k: C^{cell}_q(\F_p) \to C^{bar}_{q-k}(\F_{p+k}\otimes \R).
\end{equation}
Below we give two different descriptions of the map \eqref{eq:wave_action_cycle} depending on the choice of vertex ordering. The first result is  a cycle in $C^{bar}_{q-k}(\F_{p+k}\otimes \R)$ while the second one is still in $C^{cell}_{q-k}(\F_{p+k}\otimes \R) \subset C^{bar}_{q-k}(\F_{p+k}\otimes \R)$. 

We recall the notion of the dual cells in the first baricentric subdivision of a polyhedral complex. Let $\Delta\in X$ be a $q$-cell. For any {\em finite} $j$-dimensional face $\Delta' \prec\Delta$ of the sedentarity $s(\Delta')=s(\Delta)$ its {\em dual cell} $\hat\Delta'_\Delta$ in the baricentric subdivision of $\Delta$ is defined as the union of all $(q-j)$-simplices in $bar(\Delta)$ containing the baricenters of $\Delta$ and $\Delta'$. We can think of $\hat \Delta'_\Delta$ as a simplicial $(q-j)$-chain. The orientations of the pair  $\Delta'$ and  $\hat \Delta'_\Delta$ are taken to agree with the original orientation of $\Delta$. 

Let $\gamma= \sum  \beta_\Delta \Delta$ be a cycle in $C^{cell}_q(X;\F_p)$. Then according to Lemma \ref{lem:divisible} the coefficients $\beta_\Delta$ for all $\Delta \subset X$ have to be divisible by the divisorial directions of $\Delta$. In particular, the wedge product of $\beta_\Delta$ with any element in $\wedge^k (W(\Delta)/W^{div}(\Delta))$ gives a well-defined element in $\F_{p+k}(\Delta)\otimes \R$. We can also think of $\gamma=\sum_{\sigma\in bar (\Delta)} \beta_\Delta \sigma$ as an element in $C^{bar}_q(X;\F_p)$. 

{\bf Description 1:} 
We label the vertices of each $q$-simplex $\sigma$ in $bar(\Delta)$ according to the dimension of the largest cells whose baricenters they represent (recall that several faces of $\Delta$ of different sedentarity may have the same baricenter). 
In this case the cycle $\phi^k \cap \gamma \in C^{bar}_{q-k}(X;\F_{p+k})$ is supported on the dual subdivision inside the $q$-skeleton of $X$.

Precisely, for every $k$-face $\Delta'$ of $\Delta$ let $w_{\Delta'}\in W_k(\Delta)$ denote the volume element  associated to $\Delta'$ as in \eqref{eq:phi^k}. 
Clearly, $w_{\Delta'}$ equals the sum of all $w_{\sigma_{0\dots k}}$ (taken with appropriate signs) for the $k$-simplices $\sigma_{0\dots k}$ forming the baricentric triangulation of $\Delta'$.
Then one can easily calculate from the definition of the cap product: 
\begin{equation}\label{eq:wave_action1}
\phi^k \cap (\sum_{\sigma\in bar (\Delta)} \beta_\Delta \sigma) = \sum_{\Delta' \prec\Delta} (w_{\Delta'}\wedge\beta_\Delta) \hat \Delta'_\Delta ,
\end{equation}
where the sum is taken over all $k$-dimensional faces of $\Delta$.
Note that higher sedentary $k$-faces don't appear in the sum because $\beta_\Delta$ vanishes  when pushed to these higher sedentary faces.

\begin{figure}
\centering
\includegraphics[height=28mm]{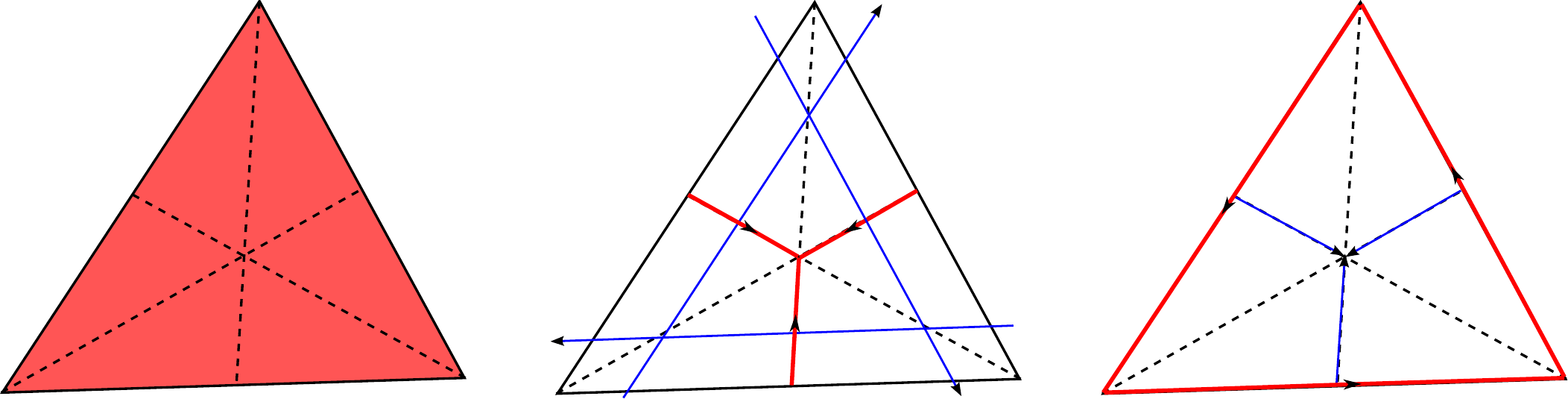}
\caption{The two descriptions of the wave action on a 2-cell $\sigma$. The support of $\phi_{sing} \cap \sigma$ is red and the framing is blue.}\label{fig:wave_action} 
\end{figure}

{\bf Description 2:} 
Here we label the vertices of each $\sigma$ 
in the opposite order to the description 1. That is the baricenters with the smaller numbers correspond to the larger faces. Now
the cycle  $\phi^k \cap \gamma\in C^{bar}_{q-k}(X;\F_{p+k})$ is supported on the $(q-k)$-skeleton of $X$. 

Precisely, for every $(q-k)$-face $\Delta'$ of $\Delta$ let $\hat w_{\Delta'}\in W_k(\Delta)$ denote the polyvector corresponding to the integration along the chain $\hat \Delta'_\Delta$. Note that the faces  $\sigma_{k\dots q}$ lie in the $(q-k)$-faces of $\Delta$. The polyvectors $w_{\sigma_{0\dots k}}$ sum to $\hat w_{\Delta'}$ for those simplices $\sigma\in bar (\Delta)$ whose faces $\sigma_{k\dots q}$ give the same simplex in $bar(\Delta')$. Then again from the definition of the cap product we can write:
\begin{equation}\label{eq:wave_action2}
\phi^k \cap (\sum_{\sigma\in bar(\Delta)} \beta_\Delta \sigma) = \sum_{\Delta' \prec\Delta} (\hat w_{\Delta'}\wedge\beta_\Delta) \Delta',
\end{equation}
where the sum is taken now over all $(q-k)$-dimensional faces of $\Delta$.

It is straight forward to check that in both cases the resulting chain 
$$\phi^k \cap (\sum_\Delta\sum_{\sigma\in bar(\Delta)} \beta_\Delta \sigma)$$ 
is a cycle.

\begin{conjecture}\label{conj:isomorphism}
Let $X$ be a smooth compact tropical variety. Then for $q\ge p$
$$ \phi^{q-p} \cap:  H_q(X;\F_p \otimes \R) \to H_{p}(X;\F_{q} \otimes \R) 
$$
is an isomorphism.
\end{conjecture}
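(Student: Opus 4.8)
This statement is the tropical manifestation of the hard Lefschetz theorem for the monodromy (nilpotent) operator of a degenerating family, so my plan is organized around that dictionary. First I record the structure of the map. Writing $H_{p,q}(X)=H_q(X;\F_p)$, the cap product $\phi\cap$ of \eqref{eq:wave_action} sends $H_{p,q}$ to $H_{p+1,q-1}$; iterating $q-p$ times lands in $H_{q,p}$, so the conjectured map is precisely the reflection of the bigraded space $\bigoplus_{p,q}H_{p,q}(X)\otimes\R$ across the diagonal $p=q$. The cycle-level formulas of Descriptions 1 and 2 (see \eqref{eq:wave_action1} and \eqref{eq:wave_action2}) give a concrete model for this reflection and are the computational tool I would use throughout; in particular they let one verify the statement directly for curves and for $\tp^N$, and reduce it to a statement about the coefficient systems $\F_p$, $\W_k$ on the local matroidal models $\Sigma_M\times\T^s$.

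In the realizable case the plan is to invoke the identification recalled from \cite{IKMZ}: the groups $H_{p,q}(X)$ are the graded pieces of the limit mixed Hodge structure on the cohomology of a nearby fiber $X_t$, and under this identification the operator $\phi\cap$ becomes the logarithm of monodromy $N=\log T$, a morphism of Hodge type $(-1,-1)$. With this translation the conjecture reads that $N^{q-p}\colon \operatorname{Gr}^W H^{p+q}_{\lim}\to \operatorname{Gr}^W H^{p+q}_{\lim}$ is an isomorphism between the graded pieces symmetric about the central weight --- exactly the monodromy--weight (hard Lefschetz) property of a polarizable limit mixed Hodge structure, which is classical. Thus for $X$ approximable by a complex one-parameter family the conjecture follows, and this already fixes the correct normalization: $H_{p,q}$ should be assigned weight $q-p$, with $\phi\cap$ lowering the weight by $2$.

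For a general (non realizable) smooth compact tropical variety no Hodge structure is available and a purely tropical argument is required. The plan is to upgrade the above to an intrinsic $\mathfrak{sl}_2$-action on $\bigoplus_{p,q}H_{p,q}(X)\otimes\R$, with $\phi\cap$ as the lowering operator and $H_{p,q}$ in weight $q-p$: one constructs a raising operator (a dual co-eigenwave, or the adjoint of $\phi\cap$ with respect to a Poincar\'e-type pairing $H_{p,q}\otimes H_{n-p,n-q}\to\R$ coming from a tropical fundamental class), checks the three $\mathfrak{sl}_2$ relations on chains using Lemma \ref{lemma:wedge} together with the cycle description \eqref{eq:wave_action2}, and then reads off the isomorphism from $\mathfrak{sl}_2$-representation theory. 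A necessary preliminary is the symmetry $\dim H_{p,q}=\dim H_{q,p}$ (the tropical analogue of $h^{p,q}=h^{q,p}$), which I would deduce by combining tropical Poincar\'e duality with the pairing above.

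The hard part is not formal: as in ordinary Hodge theory, the existence of the $\mathfrak{sl}_2$-action --- equivalently the injectivity of $\phi^{q-p}$ rather than a mere equality of dimensions --- rests on a positivity input, namely a tropical analogue of the Hodge--Riemann bilinear relations polarizing the $\phi$-primitive subspaces. Establishing these relations for arbitrary smooth compact tropical varieties is the genuine obstacle. I would attempt it by a local-to-global scheme: prove the Hodge--Riemann relations on the local models $\Sigma_M\times\T^s$ using the combinatorial Hodge theory of matroids, and then globalize by a weight (Steenbrink-type) spectral sequence whose first page is assembled from the local data and whose differentials are controlled by $\phi$. The delicate point is that $\phi$ is a genuinely global class recording the gluing and affine structure, so the positivity must be shown to survive the globalization; controlling this interaction is where I expect the main difficulty to lie, and it is the reason the statement is posed here only as a conjecture.
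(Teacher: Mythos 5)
You correctly recognize that this statement is posed as a conjecture, and that what can actually be proved is the realizable case; your dictionary there (eigenwave $=$ logarithm of monodromy, $H_{p,q}$ placed in weight $q-p$, conclusion from the classical monodromy--weight isomorphism) is exactly the paper's. But your realizable-case argument has a genuine gap at its central step: you assert that ``under this identification the operator $\phi\cap$ becomes the logarithm of monodromy $N$'' as though it were part of the result quoted from \cite{IKMZ}. It is not. Theorem \ref{theorem:main} supplies only an abstract isomorphism of groups $\tilde{E}^2_{q,p}\cong H_q(X;\F_p)\otimes\Q$; it says nothing about how the eigenwave acts under this isomorphism, and an abstract isomorphism of $E_2$-terms does not intertwine any operators. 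Establishing that compatibility is precisely the technical content of the paper's Appendix: one must produce a chain-level model on which both actions are simultaneously computable. The paper does this by constructing the \emph{konstruktor}, a subcomplex of $C^{bar}_\bullet(X;\F_\bullet)$ built from propellers via $c\mapsto c[-r]$, proving that its simplicial boundary reproduces the Steenbrink--Illusie differential $d'+d''$ (Proposition \ref{prop:konstruktor}), that its inclusion into $C^{bar}_\bullet(X;\F_p)$ is a quasi-isomorphism, and finally that the explicit cellular formula for the cap product gives $\phi\cap(c[-r])=c[-r-1]$ (Proposition \ref{konstruktor_action}), i.e.\ exactly the Tate-twist map by which $\nu$ acts on $E_1$. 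Only after all of this does the classical monodromy--weight isomorphism on the $E_2$ term finish the argument. Without some substitute for this chain-level comparison, the realizable case does not follow from what you have written.

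Your program for the non-realizable case (an intrinsic $\mathfrak{sl}_2$-action, tropical Hodge--Riemann relations on the local models $\Sigma_M\times\T^s$, globalization by a weight-type spectral sequence) is not in the paper, which offers no proof in that generality either; it is a plausible research outline, but it should be flagged as speculation rather than presented as a component of the proof.
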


We will prove the conjecture in the realizable case in Section \ref{section:konstruktor} though we believe that realizability assumption is not necessary. Certain amount of smoothness, on the other hand, is essential. In the non-smooth case even the ranks of $H_q(X; \F_p)$ and  $H_{p}(X; \F_{q})$ may not agree. A simple example is provided by the nodal genus 2 curve (see Example \ref{eg:nodal}).

The action of the eigenwave $\phi$ is trivial on straight tropical $(p,p)$-classes.
\begin{theorem}
If $\gamma\in H_{p,p}^{straight}(X)$ then $\phi\cap\gamma=0$.
\end{theorem}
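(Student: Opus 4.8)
The plan is to compute $\phi\cap\gamma$ directly on the chain level using the explicit straight representative of $\gamma$ produced in Proposition \ref{specialfundclass}. Write $\gamma=[Z]$ for a straight tropical $p$-cycle $Z\subset X$, and choose a triangulation $Z=\bigcup\sigma$ fine enough that each $p$-simplex $\sigma$ lies in a single chart and a single combinatorial stratum. Then $\gamma$ is represented by the tropical cycle $\gamma_Z=\sum_\sigma w(\sigma)\Vol_\sigma\,\sigma$, where by construction $\Vol_\sigma$ is the generator of $\Lambda^p(W^Z_{\Z}(\sigma))\subset\F_p(\sigma)$. The crucial structural fact I would exploit is that the coefficient of each simplex spans the \emph{top} exterior power of the $p$-dimensional wave tangent space $W^Z(\sigma)$ to $Z$.

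Next I would evaluate the eigenwave on the edges of these simplices. For an edge $\tau=\sigma_{01}$ of a simplex $\sigma\subset Z$, the value $\phi(\tau)=y_{\tau(1)}-y_{\tau(0)}$ depends only on the endpoints, which both lie in a single flat facet of $Z$; hence $\phi(\tau)$ is the displacement along a chord tangent to $Z$ and therefore lies in $W^Z(\sigma)$. By the definition \eqref{eq:cap_product} of the cap product, each term of $\phi\cap\gamma_Z$ contains the wedge $\phi(\sigma_{01})\wedge\Vol_\sigma$. Since both factors lie in $W^Z(\sigma)$ and $\dim W^Z(\sigma)=p$, this wedge lands in $\Lambda^{p+1}(W^Z(\sigma))=0$. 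Thus every term vanishes on the nose, $\phi\cap\gamma_Z=0$ as a chain, and so $\phi\cap\gamma=0$ in homology.

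The one point requiring care — and the main obstacle — is the behavior at sedentary points, where $\phi$ is only defined modulo $B_{div}^1(X;\W_1)$ and is computed through nearby mobile points $y_x$, so that $\phi(\tau)$ is a priori only tangent to $Z$ modulo the divisorial subspace $W^{div}$. To settle this cleanly I would pass to the polyhedral picture and use the cycle-level formula \eqref{eq:wave_action1}: choosing a polyhedral structure on $X$ adapted to $Z$, the class $\gamma$ is a cellular cycle $\sum_\Delta\beta_\Delta\Delta$ with $\beta_\Delta=w(\Delta)\Vol_\Delta\in\Lambda^p(W^Z(\Delta))$, and Lemma \ref{lem:divisible} supplies exactly the divisibility of $\beta_\Delta$ by the divisorial vectors needed for the wedge with elements of $W(\Delta)/W^{div}(\Delta)$ to be well defined. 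In \eqref{eq:wave_action1} each summand carries the factor $w_{\Delta'}\wedge\beta_\Delta$ over the edges $\Delta'\prec\Delta$, and $w_{\Delta'}$ is again tangent to $Z$, hence in $W^Z(\Delta)$; wedging with $\beta_\Delta\in\Lambda^p(W^Z(\Delta))$ therefore vanishes in $\Lambda^{p+1}(W^Z(\Delta))=0$. This simultaneously disposes of the divisorial indeterminacy, consistent with the observation made when $\phi$ was constructed that the $B_{div}^1$-ambiguity annihilates any cycle.
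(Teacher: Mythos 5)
Your proof is correct and is essentially the paper's own argument: the paper's one-line proof says precisely that any vector parallel to a simplex of a straight tropical cycle wedges to zero against the volume element of that simplex, which is exactly your key computation $\phi(\sigma_{01})\wedge\Vol_\sigma\in\Lambda^{p+1}\bigl(W^Z(\sigma)\bigr)=0$. Your extra care at sedentary points (via Lemma \ref{lem:divisible} and divisibility by divisorial vectors) merely fills in a detail that the paper's terse proof leaves implicit.
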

\begin{proof}
Any vector parallel to a simplex $\sigma$ of a special tropical cycle turns to zero after the
wedge product with the volume element of $\sigma$.
\end{proof}


\section{Intermediate Jacobians}

\subsection{Tropical tori}
Let $V$ be a $g$-dimensional real vector space containing two lattices $\Gamma_1,\Gamma_2$ of maximal rank, that is $V \cong \Gamma_{1,2}\otimes \R$. Suppose we are given an isomorphism $Q:\Gamma_1\to \Gamma_2^*$, which is symmetric if thought of as a bilinear form on $V$. 

\begin{definition}
The torus $J=V/\Gamma_1$ is the {\em principally polarized tropical torus} with $Q$ being its polarization. The tropical structure on $J$ is given by the lattice $\Gamma_2$. If, in addition $Q$ is positive definite, we say that $J$ is an {\em abelian variety}.
\end{definition}

\begin{remark}
The map $Q:\Gamma_1\to \Gamma_2^*$ provides an isomorphism of $J=V/\Gamma_1$ with the tropical torus $V^*/\Gamma_2^*$. The tropical structure on the latter is provided by the lattice $\Gamma_1^*$.
\end{remark}

\begin{remark}
The above data $(V, \Gamma_1, \Gamma_2, Q)$ is equivalent to a non-degenerate real-valued quadratic form $Q$ on a free abelian group $\Gamma_1\cong \Z^g$. The other lattice $\Gamma_2 \subset V := \Gamma_1\otimes \R$ is defined as the dual lattice to the image of $\Gamma_1$ under the isomorphism $V\to (V)^*$ given by $Q$.
\end{remark}

Let us take the free abelian group $\Gamma_1=H_q(X; \F_p)\cong\Z^g$ with $p+q=\dim X$, and $p\le q$. We define the tropical intermediate Jacobian as the torus above together with a symmetric  bilinear form $Q$ on $H_q(X; \F_p)$. 

The form $Q$ is a certain intersection product on tropical cycles which we define in two ways. The first definition is manifestly symmetric while the second definition descends to homology. And then we show that the two definitions are equivalent. 

Unfortunately we are not able to show in this paper that the form is non-degenerate, though we believe that in the smooth and compact case this should be true (cf. Conjecture \ref{conj:non_degenerate}).

\subsection{Intersection product} Let $X$ be a compact tropical space of dimension $n$.
For a singular simplex $\sigma$ we denote its relative interior by $int(\sigma)$. We abuse the notation $int(\sigma)$ to denote also its image in $X$. 

\begin{definition}
We say that a tropical chain $\sum\beta_\sigma \sigma\in C_{q}(X; \F_{p})$ is {\em transversal to the combinatorial stratification of $X$} (or, simply, transversal) if for any simplex $\sigma$ and any face $\tau\prec_k\sigma$
we have
\begin{itemize}
\item  $int(\tau)$ meets strata of X only of dimension $(n-k)$ and higher;
\item if $\tau$ lies in a sedentary stratum of $X$ then $\beta_\sigma$ is divisible by all corresponding divisorial directions.
\end{itemize}
\end{definition}

\begin{definition}
We say that two transversal tropical chains $\sum\beta_{\sigma'} \sigma'\in C_{q'}(X; \F_{p'})$ and $\sum\beta_{\sigma''} \sigma'' \in C_{q''}(X; \F_{p''})$ form a {\em transversal pair} if the following holds. For every pair of simplices $\sigma', \sigma''$ from these chains and any choice of their faces $\tau'\prec\sigma', \tau''\prec \sigma''$, if the interiors $int(\tau'), int(\tau'')$ lie in the same stratum $\s$ then $int(\tau'), int(\tau'')$ are transversal in the usual sense as smooth maps to $\s$.
\end{definition}

If a pair of simplices $\sigma', \sigma''$ from the transversal pair have non-empty intersection then all three submanifolds $\sigma' , \sigma'', \sigma'\cap \sigma''$ are supported on the same maximal stratum ${\s_{\sigma'\cap \sigma''}}$ of $X$ (and on no smaller strata). The oriented triple $\sigma' , \sigma'', \sigma'\cap \sigma''$ determines an integral volume element $\Vol_{\s_{\sigma'\cap \sigma''}}$ as well as its dual volume form $\Omega_{\s_{\sigma'\cap \sigma''}}$. By transversality, $\sigma'\cap \sigma''$ has dimension $q'+q''-n$. We can choose a singular chain $\sum \tau$ representing its relative fundamental class agreeing with the orientation of $\sigma'\cap \sigma''$. 

Let $\gamma'=\sum\beta_{\sigma'} \sigma'\in C_{q'}(X; \F_{p'})$ and $\gamma''=\sum\beta_{\sigma''} \sigma'' \in C_{q''}(X; \F_{p''})$ be a transversal pair of tropical chains. We define the following bilinear product with values in the cowave chains:
\begin{equation}\label{eq:dot_product}
\gamma'\cdot\gamma''=\sum_
{\tau\subset\sigma'\cap \sigma''}
 \Omega_{\s_\tau} (\beta_{\sigma'}\wedge \beta_{\sigma''}) \cdot \tau \in C_{q'+q''-n}(X; \W^{n-p'-p''}).
\end{equation}

\begin{remark}
Note that $\gamma'\cdot\gamma''$ has no support on infinite simplices $\tau\subset\sigma'\cap \sigma''$ since the divisorial directions in $W^{div}(\tau)$ divide both $\beta_{\sigma'}$ and $\beta_{\sigma''}$.
\end{remark}

\begin{remark}
If $q'+q'' < n$ or $p'+p''>n$ then $\gamma' \cdot \gamma''=0$ for dimensional reasons. In what follows we will tacitly assume this is not the case.
\end{remark}

From now on we assume that $X$ is a compact smooth tropical space.
Our goal will be to show that in this case the above product descends to homology.


First we show that we can deform all cycles to a transverse position. Since the question is local we can work in a chart $\phi_\alpha: U_\alpha \to Y \subset \T^N$.  The next lemma says that we can move a tropical cycle $\gamma$ off a face $E$ of $Y$, if it intersects it in higher than expected dimension, not changing it outside the open star $\ostar(E)$.

\begin{lemma}\label{transversal-E}
Let $\gamma \in C_q(X,\F_p)$ be a (singular) tropical cycle in a tropical $n$-dimensional manifold $X$ and let $E$ be an $l$-face of $Y$ in a chart $\phi_\alpha: U_\alpha \to Y \subset \T^N$. Then there exists a cycle $\gamma'=\sum \beta_\sigma \sigma\in C_q(X;\F_p)$ homologous
to $\gamma$ and such that for any $(q-k)$-face $\tau$ of a simplex $\sigma$  we have $int(\tau)\cap E=\emptyset$, i.e. $\tau$ is not supported on $E$ whenever $k+l<n$.
In addition, $\gamma'$ satisfies to the following properties:
\begin{itemize}
\item
$\gamma \cap (X\setminus(U_\alpha \cap \ostar(E))) = \gamma'\cap (X\setminus(U_\alpha \cap \ostar(E)))$,
 \item
 the chain $\gamma-\gamma'$ is the boundary of a tropical $(q+1)$-chain supported in $U_\alpha \cap\ostar(E)$,
 \item
 if $E$ has positive sedentarity then any simplex $\sigma$ such that $\sigma \cap E \ne \emptyset$ has its coefficient $\beta_\sigma$ divisible by all divisorial vectors corresponding to $E$.
\end{itemize}
\end{lemma}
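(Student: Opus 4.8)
The statement is a general position result, and the plan is to prove it by a transversality argument for singular tropical cycles carried out locally in the chart, exploiting the local product structure of the open star $\ostar(E)$ together with the flexibility of singular (as opposed to rigid polyhedral) chains.

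First I would fix the local model. Since $X$ is a tropical manifold, in the chart $\phi_\alpha$ a neighbourhood of the relative interior of $E$ is tropically isomorphic to a product $E^\circ\times C_E$, where $C_E$ is the transverse star of $E$ — an $(n-l)$-dimensional Bergman fan, possibly carrying a sedentary factor $\T^s$ — and $E$ itself is the slice $E^\circ\times\{o\}$ over the apex $o$ of $C_E$. Writing $\pi\colon \ostar(E)\to C_E$ for the projection, we have $E=\pi^{-1}(o)$, and the interior of $E$ is the combinatorial stratum $\s_E$ of dimension $l$, that is of codimension $n-l$ in $X$. The conclusion "$int(\tau)\cap E=\emptyset$ whenever $k+l<n$" is precisely the assertion that $\gamma'$ meets $\s_E$ transversally: a transverse $q$-cycle meets a stratum of codimension $n-l$ in dimension $q-(n-l)$, so only faces $\tau$ of dimension $\le q-(n-l)$, i.e. of codimension $k\ge n-l$, can be supported on $E$; equivalently, $k+l<n$ forces $int(\tau)$ off $E$.

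Next I would carry out the perturbation. I would barycentrically subdivide $\gamma$ finitely many times so that every simplex $\sigma$ meeting $\ostar(E)$ is small: contained in $U_\alpha$, with the interior of each of its faces lying in a single combinatorial stratum. I would then build a homotopy $h_t\colon X\to X$ which is the identity outside $U_\alpha\cap\ostar(E)$ and which, inside the star, pushes points transversally off $E$ into the adjacent higher-dimensional strata along a generic direction of the transverse cone $C_E$, with a cut-off making $h_t=\id$ near $\partial\ostar(E)$. Putting $\gamma'=(h_1)_*\gamma$ and letting $H$ be the trace of the homotopy applied to $\gamma$, the chain $H$ is a tropical $(q+1)$-chain supported in $U_\alpha\cap\ostar(E)$; since $\partial\gamma=0$ the prism identity gives $\gamma-\gamma'=\partial H$ (second bullet), while $h_t=\id$ off the star gives agreement outside (first bullet). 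As $\gamma'$ crosses strata I would transport the coefficients $\beta_\sigma$ by the cosheaf maps $\iota$ of \eqref{cosheafmap}, so that $\gamma'$ is again a tropical chain; being homologous to $\gamma$ it is automatically a cycle. For a generic transverse direction $\gamma'$ is transverse to $\s_E$, and the dimension count above yields the main conclusion. For the sedentary case, the map $\iota$ into $\F_p(\s_E)$ is the projection along the divisorial directions of $E$, as in \eqref{eq:strata_maps}; since $\gamma'$ is a cycle, the coefficient of $\partial\gamma'$ on each sedentary boundary face must vanish, which exactly as in Lemma \ref{lem:divisible} forces $\beta_\sigma$ to be divisible by all divisorial vectors of $E$ for every $\sigma$ meeting $E$, and I would arrange the subdivision to preserve this (third bullet).

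The main obstacle I expect is the construction of the homotopy $h_t$ itself. Pushing $\gamma$ off the stratum $\s_E$ demands a motion transverse to $\s_E$, whereas remaining inside the polyhedral complex $Y$ forbids free translation out of the cones of the fan $C_E$. Reconciling these constraints — producing a push that is transverse to $E$, stays within $Y$, is consistent across the shared faces of adjacent simplices, and is compatible with the coefficient transport so that $\gamma'$ is a genuine tropical cycle — is the delicate heart of the argument; it is here that the local Bergman-fan structure of the tropical manifold and the homotopy flexibility of singular chains are essential.
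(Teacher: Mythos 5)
There is a genuine gap, and it sits exactly where you locate it yourself: the construction of the push-off. Everything you actually carry out (the local product model $E^\circ\times C_E$, the prism identity $\gamma-\gamma'=\partial H$, the cut-off, the dimension count, the divisibility argument for the sedentary bullet via Lemma \ref{lem:divisible}) is routine bookkeeping; the entire content of the lemma is the existence of the deformation, which you defer as ``the delicate heart of the argument.'' Worse, the specific mechanism you propose --- a single ambient homotopy $h_t:X\to X$, identity outside the star, pushing points off $E$ ``along a generic direction of the transverse cone,'' followed by transport of coefficients by the maps \eqref{cosheafmap} --- cannot work as stated, for two reasons. First, the cosheaf maps $\iota$ of \eqref{cosheafmap} go from larger strata \emph{to} smaller ones: for $\s_E\prec\s'$ there is a map $\F_p(\s')\to\F_p(\s_E)$ but no natural map $\F_p(\s_E)\to\F_p(\s')$. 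A simplex of $\gamma$ lying in $E$ carries a coefficient in $\F_p(\s_E)$, which is generated by contributions $\iota(v_1)\wedge\dots\wedge v_p$ from \emph{several different} adjacent facets; to move it off $E$ one must in general decompose the coefficient and push the pieces into \emph{different} facets of the fan, in a way that keeps the boundaries cancelling. A single map $h_1$ applied to the support cannot implement such a splitting. Second, a cycle-independent ambient push-off would be insensitive to smoothness, but the paper's remark immediately after the lemma (two $2$-planes in $\R^4$ meeting at a point, with an unframed path through the vertex) shows the lemma is \emph{false} without smoothness: no deformation moves that cycle off the vertex. So any correct proof must use the matroidal structure in an essential, cycle-dependent way, not merely invoke it as making a generic perturbation possible.

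For comparison, the paper's proof is a double induction that uses the Bergman-fan structure concretely: present the matroid $M$ as obtained from a uniform submatroid $M_0$ of the same rank by adjoining elements one at a time, $M_0\subset\dots\subset M_{|M|-r(M)}=M$; the base case $Y_{M_0}$ is a \emph{complete} fan, where $\F_p$ is a constant coefficient system and general position is classical. The inductive step uses the coordinate projection $\tau_j:Y_{M_{j+1}}\to Y_{M_j}$, which is injective off the open star of $e_{\epsilon_{j+1}}$, together with the product structure $\ostar(e_{\epsilon_{j+1}})\cong Y_{H_j}\times\R$, to transport the deformation problem between fans of smaller complexity, with a separate induction on dimension. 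The sedentary case is handled by decomposing $\gamma=\sum_{J\subset I}\gamma_J$ according to divisibility of coefficients by the divisorial polyvectors $V_J$, pushing the pieces with $J\neq I$ off $E$ by vector fields in the free coordinate directions, and treating $\gamma_I$ as a relative cycle for the pair $(\T^I,\partial\T^I)$. None of this structure appears in your proposal, and without it the claim that the required homotopy exists is an assertion of the lemma itself rather than a proof of it.
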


\begin{proof}
First let us consider the case when $E$ is mobile. Working in a chart we can assume $Y$ is the Bergman fan
for some loopless matroid $M$.
Clearly, any matroid $M$ contains a uniform submatroid $M_0\subset M$ of the same rank $r(M)$
(by submatroid we mean a subset with the restriction of the rank function).
Thus, we have a sequence $M_0 \subset \dots \subset M_{|M|-r(M)}=M$ of submatroids of $M$
such that $M_{j+1}$ is obtained from $M_{j}$ by adding one element $\epsilon_{j+1}$. 
We may form a matroid $H_j$ of rank $r(M_j)-1$ by setting a new rank function $r_{H_j}$ on $M_j$,
$r_{H_j}(A)=r_M(A\cup \epsilon_{j+1})-1$ for $A\subset M_j$.

The fan $Y_{M_{j+1}}\subset \R^{|M_j|}$ maps to the fan $Y_{M_j}\subset \R^{|M_j|-1}$
by projection along the coordinate corresponding to the element $\epsilon_{j+1}$. 
If the matroid $H_j$ has loops
this map 
$$\tau_j:Y_{M_{j+1}}\to Y_{M_j}
$$ 
is an isomorphism.
\begin{figure}
\centering
\includegraphics[height=60mm]{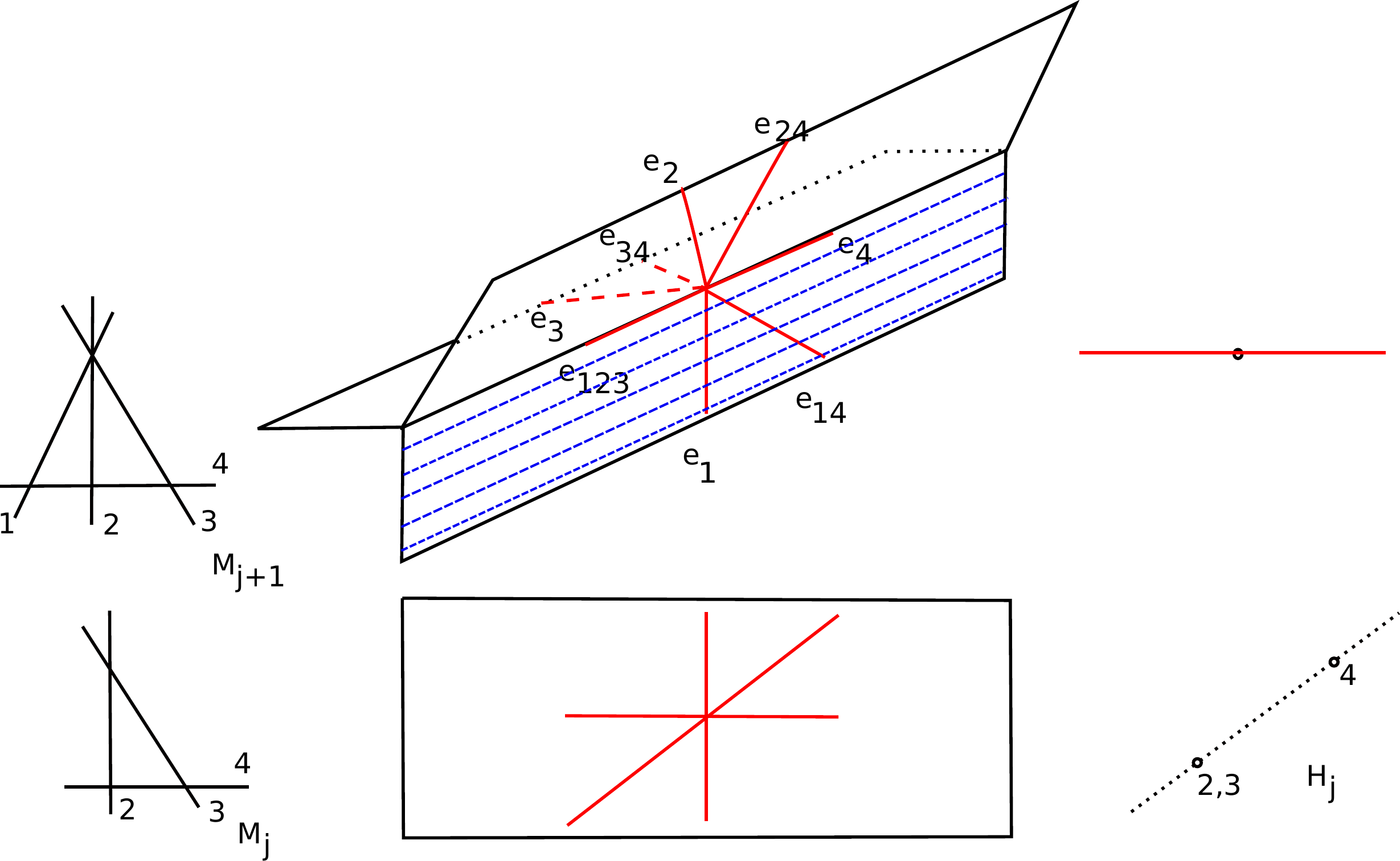}
\caption{The matroids $M_{j+1}, M_j$ and $H_j$ and their corresponding fans. The fan $Y'_{M_{j+1}}$ is the unshaded part of  $Y_{M_{j+1}}$. The shaded part of  $Y_{M_{j+1}}$ is $\ostar(e_{\epsilon_{j+1}})$.}
\label{fig:lemma59} 
\end{figure}
Otherwise note that the Bergman fan $Y_{H_j}$ is a subfan of $Y_{M_j}$. Also we denote by $Y'_{M_{j+1}}$ the subfan of $Y_{M_{j+1}}$ containing only those cones whose corresponding flags do {\em not} have two flats differing just by $\epsilon_{j+1}$, see Fig. \ref{fig:lemma59}. Then $\tau_j: Y'_{M_{j+1}}\to Y_{M_j}$ is a one-to-one map linear on the cones, cf. \cite{Shaw}.
Indeed, $\tau_j$ contracts precisely those cones of $Y_{M_{j+1}}$ which are parallel to
$e_{\epsilon_{j+1}}$. 

$Y_{M_0}$ is a complete fan in $\R^{r(M)-1}$ and the lemma is trivial since the coefficients $\F_p = \Lambda^p \Z^{r(M)-1}$ are constant on all strata and we may deform $\gamma$ into a general position (subdividing simplices in $\gamma$ if needed to keep the chain strata-compatible).
Inductively we suppose that the lemma holds for $Y_{M_j}$ and the matroid $H_j$ is loopless
and then prove that the lemma holds for $Y_{M_{j+1}}$.

We denote by $\ostar(e_{\epsilon_{j+1}})$ the complement of $Y'_{M_{j+1}}$ in $Y_{M_{j+1}}$. It really is the open star of $e_{\epsilon_{j+1}}$ (in the coarsest face structure of $Y_{M_{j+1}}$). Note that $\ostar(e_{\epsilon_{j+1}})\cong Y_{H_j}\times\R$.

If $E\subset\ostar(e_{\epsilon_{j+1}})$ we may use the inductive
assumption for projections to $Y_{H_j}$ (it has smaller dimension) together with a deformation 
along a generic vector field parallel to $e_{\epsilon_{j+1}}$.

If $E\not\subset\ostar(e_{\epsilon_{j+1}})$, that is $E$ is contained in $Y'_{M_{j+1}}$ we have $\dim(\tau_j(E))=\dim(E)=l$. Consider singular $q$-simplices from $\gamma$ with the interiors mapped to $\ostar(E)$ and such that their closures intersect $E$. These simplices form a chain $\gamma_E$ which can be considered as a relative cycle modulo its boundary $\dd \gamma_E$. We have $\dd \gamma_E\cap E=\emptyset$. Furthermore, $\tau_j(\dd \gamma_E)$
is a $(q-1)$-cycle in the $(n-1)$-dimensional tropical manifold $Y_{H_j}$.
By induction on dimension we may assume that $\tau_j(\dd \gamma_E)\cap \ostar(e_{\epsilon_{j+1}})$ can be deformed in $Y_{H_j}$
to a cycle with simplices
without faces of dimension larger than $q-n+l$ whose relative interiors are contained in $E$.
As $\ostar(e_{\epsilon_{j+1}})\cong Y_{H_j}\times\R$ such deformation lifts to $Y_{M_{j+1}}$ and can be extended to 
a deformation of $\gamma$ in $Y_{M_{j+1}}$.  

By induction on $j$ 
there exists a tropical chain $b_j\in C_{q+1}(Y_{M_j};\F_p)$ such that the relative interiors of $k$-faces of 
singular simplices of  $\gamma'_j=\dd B_j - \tau_j (\gamma)$ are disjoint from $E$.
This assumption holds for any face structure on $Y_{M_j}$, in particular for the one compatible with $Y_{H_j}$.
Then the relative interiors of all $q$-dimensional simplices are disjoint from $Y_{H_j}$ and we
can form $\tilde b_j\in C_{q+1}(Y_{M_{j+1}};\F_p)$ and $\tilde \gamma'_j\in C_{q+1}(Y_{M_{j+1}};\F_p)$
by applying $\tau_j^{-1}|_{Y_{M_j}\setminus Y_{H_j}}$ to $b_j$ and $\gamma'_j$.
Note that $\dd\tilde b_j - \gamma - \tilde \gamma'_j$ must have 
the coefficients 
vanishing under $\tau_j$, even though generated from the facets of $Y'_{M_{j+1}}$.
Such coefficients must be supported on $\ostar(e_{\epsilon_{j+1}})$ and thus we may apply
the same reasoning as in the case of $E\subset\ostar(e_{\epsilon_{j+1}})$.

Finally, let us now consider the sedentary case, that is let $E$ be a sedentarity $s$ face of $Y_M\times\T^s$ with $s=|I|>0$. Let $\xi_j$ be the divisorial vectors, and let $V_J:=\wedge_{j\in J} \xi_j$ denote the divisorial $|J|$-polyvector for each $J\subset I$.
We will need to deform $\gamma$ to $\gamma'$ so that no $(q-s)$, or smaller, -dimensional face of a simplex $\sigma$ in $\gamma'$ meets $Y_M\times\{-\infty\}$ (here $\{-\infty\}\in\T^s$ is the point of
sedentarity $s$).
In $Y_M\times\R^I$ the groups $\F_p$ split into the  direct sum $\oplus_{J\subset I} \F_p^J$,  where $\F_p^J$ consists of elements divisible by the polyvector $V_J$, and no larger $V_{J'}$. (The splitting is not canonical, it depends on a chart). Accordingly, we have a decomposition $\gamma=\sum_{J\subset I} \gamma_J$ into cycles.

If $J\ne I$, that is there exists $j\notin J$, we may push $\gamma_J$ from $E$ with the help of a vector field parallel to $x_j$. Note that $\gamma_J$ remains a cycle after such deformation as $\xi_{j}$ is not present in the coefficients of $\gamma_J$. Thus by induction on sedentarity we may assume $J=I$.

The cycle $\gamma_I$ has coefficients in $\F_{p-s}^{Y_M}\otimes V_I$, and hence can be interpreted as
a relative cycle modulo $\dd\T^I=\T^I\setminus\R^I$ with coefficients in $\F_{p-s}^{Y^M}$
(as $V_I$ vanishes on $\dd\T^I$ and constant otherwise) and $(T^I,\dd T_I)$ is homeomorphic
to the pair $\R^{s-1}\times (\R_{\ge 0},\{0\})$ of a half-space and its boundary.
Thus $\gamma_I$ may be deformed to a product (after simplicial subdivision) 
of the relative fundamental cycle in the $s$-dimensional half-space with
some $(q-s)$-dimensional singular cycle. In particular, $E$ will not meet any codimension $<s$ face of a  $q$-simplex in a deformed cycle. 
\end{proof}

\begin{remark}
Let $\Sigma = \bigcup \sigma$ be an integral polyhedral fan (with its cones $\sigma$ oriented). Then using the inclusion homomorphisms \eqref{cosheafmap} we can form the complex $C^{(p)}_k:= \oplus_{\dim \sigma =k} \F_p(\sigma)$. In case $\Sigma$ is a matroidal fan the statement of Lemma \ref{transversal-E} is equivalent to that the complex  $C^{(p)}_\bullet$ has only the highest homology.
\end{remark}

\begin{remark}
When $X$ is not smooth the statement of the Lemma is not true. For example let $X$ be a union of two 2-planes in $\R^4$ intersecting in a point. Consider an unframed path (that is cycle in $C_1(X;\F_0)$ through the vertex which starts in one plane and ends in the other plane. Any deformation of this path will still have to go through the vertex. 
\end{remark}

\begin{corollary}\label{lemma:transversal} 
Let $X$ be a tropical manifold. Then
\begin{enumerate}
\item Every class in $H_q(X; \F_p)$ is represented by a transversal cycle.
\item Every pair of classes in $H_{q'}(X; \F_{p'})$ and $H_{q''}(X; \F_{p''})$ is represented by a transversal pair of cycles.
\item If $\gamma'_1, \gamma'_2$ are two cycles which represent the same class in  $H_{q'}(X; \F_{p'})$ and both form transversal pairs with a cycle $\gamma''\in C_{q''}(X; \F_{p''})$,
then there is $b\in C_{q'+1}(X; \F_{p'})$ which form a transversal pair with $\gamma''$, and such that $\partial b=\gamma'_1-\gamma'_2$.
\end{enumerate}
\end{corollary}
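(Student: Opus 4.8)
The three statements are exactly the consequences of the moving Lemma~\ref{transversal-E} that one expects, and the plan is to deduce each by feeding the lemma into an induction on the dimension of the strata of $X$, supplemented in parts (2) and (3) by the classical Thom transversality theorem applied inside each (smooth) stratum.

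For part (1) I would represent the given class by an arbitrary cycle $\gamma\in C_q(X;\F_p)$ and then apply Lemma~\ref{transversal-E} to every face $E$ occurring in every chart $\phi_\alpha$, processing these finitely many faces in order of increasing dimension. For a face $E$ of dimension $l$ the lemma replaces $\gamma$ by a homologous cycle all of whose $(q-k)$-faces $\tau$ with $k+l<n$ avoid $\operatorname{int}(E)$; this is precisely the first bullet in the definition of a transversal chain, while the third conclusion of the lemma (divisibility of $\beta_\sigma$ by the divisorial vectors of $E$ when $E$ is sedentary) supplies the second bullet. The key point making the induction go through is that each such deformation is confined to $U_\alpha\cap\ostar(E)$ and the new and old cycles agree outside that open star; since $\operatorname{int}(E')$ does not meet $\ostar(E)$ for any $E'$ with $\dim E'<\dim E$, processing faces from low to high dimension only lets a deformation interact with faces of dimension $\ge\dim E$, which are treated afterwards, so a transversality once gained is not spoiled. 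Iterating over all faces of all (finitely many) charts yields a globally transversal representative.

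For part (2) I would first invoke part (1) to replace the two classes by individually transversal cycles $\gamma'$ and $\gamma''$. What remains is mutual transversality: whenever faces $\tau'\prec\sigma'$ and $\tau''\prec\sigma''$ have interiors in one stratum $\s$, their images should be transverse as smooth maps into $\s$. Since each stratum $\s$ is an honest smooth manifold (an open subset of $\R^{\dim\s}$ in a chart) on which $\F_p$ and $\W^k$ are locally constant, this is exactly Thom transversality: a generic arbitrarily small perturbation of $\gamma''$ inside $\s$ makes it transverse to $\gamma'$. I would carry out these perturbations inductively over strata by increasing dimension, each one cut off by a bump function in the interior of its stratum so that the lower strata are left rigid; the $\F$-divisibility conditions persist because a small perturbation inside a stratum does not change the combinatorial type of any simplex. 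For part (3) I would choose any $b_0\in C_{q'+1}(X;\F_{p'})$ with $\partial b_0=\gamma'_1-\gamma'_2$ (which exists since the two cycles are homologous) and then run the same two-step procedure — Lemma~\ref{transversal-E} for excess-dimension general position against the chart faces, then Thom transversality stratum by stratum against $\gamma''$ — but now \emph{relative} to the boundary $\partial b_0$, which already forms a transversal pair with $\gamma''$; all perturbations are supported away from $\partial b_0$, and the resulting chain is the desired $b$.

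The main obstacle, I expect, lies in parts (2) and (3): one must interleave the purely combinatorial moving Lemma~\ref{transversal-E} with the generic smooth perturbations of Thom transversality while simultaneously respecting the stratified structure together with the divisibility constraints on the $\F$-coefficients, and, in (3), keeping the already-transversal boundary $\partial b_0$ fixed. Checking that these two rather different kinds of deformation can be applied in sequence without one undoing the other, and that the relative version genuinely leaves $\partial b_0$ untouched, is the technical heart of the argument.
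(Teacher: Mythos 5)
Your proposal is correct and follows essentially the same route as the paper: part (1) by iterating Lemma~\ref{transversal-E} over faces in order of increasing dimension (using that an open star meets only faces of higher dimension, so earlier gains are not spoiled), part (2) by Sard-type general position within each stratum, extended to the open star and processed in non-decreasing dimension, and part (3) by running the same argument relative to the boundary of a connecting chain. The paper's own proof is exactly this, stated more tersely.
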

\begin{proof}
We may start from any tropical cycle and deform it to a transversal position by applying
Lemma \ref{transversal-E} stratum by stratum starting from $0$-dimensional faces and then higher-dimensional strata.
(Note that in a chart the open star of any face can intersect only faces of higher dimension).

Suppose that we have two transversal cycles. Since any stratum $\s$ is a manifold we can make interiors of faces of the simplices from these cycles transversal in $\s$ by a small deformation
with the help of the usual Sard's theorem. In any chart this deformation extends to a small deformation in $\ostar(\s)$.
Making this procedure stratum by stratum in the order of non-decreasing dimension we make any pair of cycles transversal. A similar argument applies to the relative cycle in the last statement of the corollary.
\end{proof}


If $p'+p''+q'+q''=2n$ we can give a numerical value to the product $\gamma'\cdot\gamma''$ by integrating the $(n-p'-p'')$-form $\Omega_{\s_\tau} (\beta_{\sigma'}\wedge \beta_{\sigma''})$ over the $(q'+q''-n)$-simplex $\tau$. Indeed, since $\beta_{\sigma'}\wedge \beta_{\sigma''}$ is divisible by all divisorial directions corresponding to sedentary faces of $\tau=\sigma'\cap \sigma''$, the integration can be carried over in the quotient space to those (infinite) coordinates, thus giving a finite answer. Thus we define
\begin{equation}\label{eq:pairing}
\int \gamma' \cdot \gamma'':=\sum_
{\tau \subset \sigma'\cap \sigma''}
\int_\tau\Omega_{\s_\tau}(\beta_{\sigma'}\wedge \beta_{\sigma''}) \in \R.
\end{equation}

The most interesting case to us is when $p'+q'=p''+q''=n$. Assuming $q'+q''\ge n$ we can use the eigenwave action on one of the cycles in the pair to make them of complementary dimensions, after which the integration becomes just summing over the intersection points
$$\<\gamma',\gamma''\>:=\int \gamma' \cdot \gamma''= \sum_{x\in |\gamma'|\cap|\gamma''|} \Omega_x (\beta'_x\wedge\beta''_x),
$$
where $\beta'_x, \beta''_x$ are the coefficients at $\sigma',\sigma''$ for their intersection points $x\in\sigma'\cap \sigma''$.

\begin{proposition}\label{prop:wave_commute}
Let $\gamma'=\sum\beta_{\sigma'} \sigma'\in C_{q'}(X; \F_{p'})$ and $\gamma''=\sum\beta_{\sigma''} \sigma'' \in C_{q''}(X; \F_{p''})$ be a transversal pair of tropical cycles with $p'+q'=p''+q''=n$ and $q'+q''\ge n$. Let $k:=q'-p''= q''-p' \ge0$.  Then 
$$ \<\phi^k \cap \gamma', \gamma''\>=\int \gamma'\cdot\gamma''.
$$
\end{proposition}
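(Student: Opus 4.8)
The plan is to prove the identity on the chain level, one transversal pair of simplices at a time, and to recognize the resulting equality as a discrete Fubini (integration-along-the-fibre) formula for the eigenwave. First I would use Corollary \ref{lemma:transversal} to represent the two classes by a transversal pair of cycles, and note that the cycle-level cap product \eqref{eq:cap_product} (realized as in \eqref{eq:wave_action2}) keeps $\phi^k\cap\gamma'$ inside the complex and, after a further small deformation, transversal to $\gamma''$; the ambiguity of $\phi$ modulo $B_{div}^1(X;\W_1)$ affects neither side, since the relevant coefficients are divisible by the divisorial directions. By bilinearity both $\langle\phi^k\cap\gamma',\gamma''\rangle$ and $\int\gamma'\cdot\gamma''$ split as sums over pairs $(\sigma',\sigma'')$ whose intersection $\tau=\sigma'\cap\sigma''$ is a $k$-simplex lying in a single maximal stratum $\s_\tau$. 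Writing $\beta'=\beta_{\sigma'}$, $\beta''=\beta_{\sigma''}$ and using the interior-product adjunction $\int_\tau\Omega_{\s_\tau}(\beta'\wedge\beta'')=\Omega_{\s_\tau}(\beta'\wedge\beta''\wedge\Vol_\tau)$, the contribution of such a pair to the right-hand side is $\Omega_{\s_\tau}(\beta'\wedge\beta''\wedge\Vol_\tau)$, where $\Vol_\tau\in\W_k(\tau)$ is the oriented volume polyvector of $\tau$. The goal is to show that the matching pair contributes the same amount to the left-hand side.

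Second, I would set up a local model in the stratum $\s_\tau\cong\R^n$. Transversality gives a splitting $\R^n=(V'\cap V'')\oplus U'\oplus U''$, where $V',V''$ are the directions of $\sigma',\sigma''$, the common space $V'\cap V''$ (the direction of $\tau$) has dimension $k$, and $\dim U'=q'-k=p''$, $\dim U''=q''-k=p'$. I would triangulate $\sigma'$ compatibly with the product structure $\tau\times(\text{transverse }p''\text{-disk})$ and order the vertices of each small $q'$-simplex so that its front $k$-face $\sigma'_{0\dots k}$ runs in the $V'\cap V''$ direction while its back $p''$-face $\sigma'_{k\dots q'}$ runs transversally in $U'$; this is the ordering underlying the cycle-level Description 2. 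In a mobile stratum $\phi^k(\sigma'_{0\dots k})=w_{\sigma'_{01}}\wedge\dots\wedge w_{\sigma'_{k-1,k}}$ is precisely the oriented volume polyvector of the front face, so $\phi^k\cap\gamma'$ assigns to the $p''$-simplex $\sigma'_{k\dots q'}$ the coefficient $w_{\sigma'_{0\dots k}}\wedge\beta'\in\F_{p'+k}=\F_{q''}$.

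Third comes the core Fubini computation. Because $\sigma''$ contains $\tau$ and is transverse to the $U'$-directions, each back face $\sigma'_{k\dots q'}$ meets $\sigma''$ in exactly one point, and it does so precisely when its $\tau$-coordinate lies in the range covered by $\tau$, so the point count along $\tau$ is governed by the base $k$-simplices of the product triangulation. At such a point $x$ the pairing contributes $\Omega_x\big((w_{\sigma'_{0\dots k}}\wedge\beta')\wedge\beta''\big)$, and the standard prism (shuffle) decomposition shows that the signed front-face polyvectors telescope, $\sum w_{\sigma'_{0\dots k}}=\Vol_\tau$, with the $U'$-components dropping out after wedging with $\beta''$, which already occupies the complementary directions. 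Summing over the triangulation and using multilinearity of $\Omega_{\s_\tau}$ yields $\sum_x\Omega_x(w_{\sigma'_{0\dots k}}\wedge\beta'\wedge\beta'')=\Omega_{\s_\tau}(\Vol_\tau\wedge\beta'\wedge\beta'')$, which equals the right-hand contribution up to the sign from reordering $\Vol_\tau$ past $\beta'\wedge\beta''$, a sign absorbed by the orientation conventions that fix $\Vol_\tau$ and $\Omega_{\s_\tau}$ from the oriented triple $(\sigma',\sigma'',\tau)$.

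Finally, the main obstacle I anticipate is the bookkeeping in the third step: verifying the telescoping identity $\sum w_{\sigma'_{0\dots k}}=\Vol_\tau$ together with the cancellation of the transverse components and the coherent tracking of orientations and signs through the prism decomposition. A secondary technical point is the sedentary case, where $\tau$ may approach faces of positive sedentarity; there one uses that both $\beta'$ and $\beta''$ are divisible by the corresponding divisorial vectors (as in Lemma \ref{lem:divisible} and in the transversality conditions), so the product is supported away from infinite simplices and the integration descends to the finite quotient, exactly as in the remark following \eqref{eq:dot_product}. Once these points are settled, the per-pair equalities sum to the desired identity.
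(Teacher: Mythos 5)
Your proposal is correct in outline and shares the paper's overall strategy --- prove the identity at the chain level, localize at each intersection simplex $\tau=\sigma'\cap\sigma''$ inside its maximal stratum, and identify the cap-product coefficients with the volume polyvector of $\tau$ paired against $\Omega_{\s_\tau}(\beta_{\sigma'}\wedge\beta_{\sigma''})$ --- but it implements this with the opposite combinatorial realization. The paper exploits the freedom in choosing baricenters: it fixes first and second baricentric subdivisions of each $\sigma'$ so that $\sigma'\cap|\gamma''|$ lies on the star skeleton of $\sigma'$, then applies Description 1, i.e.\ \eqref{eq:wave_action1}, so that $\phi^k\cap\gamma'$ is carried on the dual cells $\widehat{(\tau_0\prec\dots\prec\tau_k)}$; exactly the dual cells of the maximal-dimensional flags cross $\gamma''$, each in one point, and the termwise identification with $\int\gamma'\cdot\gamma''$ leans on the fact, already recorded in Section \ref{subsection:wave_action}, that the flag polyvectors of a $k$-face sum to its volume element. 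You instead adapt a product (shuffle) triangulation of $\sigma'$ to the splitting $(V'\cap V'')\oplus U'\oplus U''$ and use the Description 2 ordering, i.e.\ \eqref{eq:wave_action2}, so $\phi^k\cap\gamma'$ sits on back faces transverse to $\tau$; you then need the prism telescoping $\sum w_{\sigma'_{0\dots k}}=\Vol_\tau$, which is precisely the bookkeeping the paper's route avoids by reusing its baricentric machinery. Your version is a viable and somewhat more self-contained ``Fubini'' argument, at the price of re-deriving that combinatorial identity (which you correctly flag as the main outstanding work); both write-ups are equally terse about orientations and the normalization of the volume polyvectors.

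Two corrections of detail. First, do not begin by invoking Corollary \ref{lemma:transversal} to ``re-represent the classes'': the proposition is a statement about the given transversal pair of cycles, and homology invariance of the pairing is only established afterwards (Proposition \ref{prop:product_homology}), so replacing $\gamma'$ or $\gamma''$ by homologous cycles at this stage would be circular; the only genuine choice to be made is the chain-level realization of $\phi^k\cap\gamma'$, which your adapted triangulation supplies. Second, the $U'$-components of the front-face polyvectors are not killed ``by wedging with $\beta''$'' --- the coefficient $\beta_{\sigma''}$ is an arbitrary element of $\F_{p''}$ and need not span the complementary directions. The correct mechanism, visible in the shuffle triangulation, is that a small simplex whose front face is not horizontal has its back face disjoint from $\sigma''$, so such simplices contribute nothing to the pairing; this is the exact analogue of the paper's observation that only the simplices with maximal-dimensional flags enter the intersection.
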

\begin{proof}
First we need a representative of the cycle $\phi^k \cap \gamma'$ such that it still forms a transversal pair with $\gamma''$. We fix first and second baricentric subdivisions of the simplices $\sigma'$ in $\gamma'$. Then by transversality of $\gamma''$ we can assume that the intersection of each $\sigma'$ with $\gamma''$ is supported on the star skeleton of $\sigma'$. That is $\sigma'\cap |\gamma''|$ consists of the $k$-simplices of the first baricentric subdivision of $\sigma'$ spanned by the baricenters of the $q'-k, \dots, q'$-dimensional faces $\tau$ of $\sigma'$. We label the $k$-simplices in the first baricentric subdivision of $\sigma'$ by the flags of its faces $(\tau_0\prec\dots \prec \tau_k)$.

\begin{figure}
\centering
\includegraphics[height=30mm]{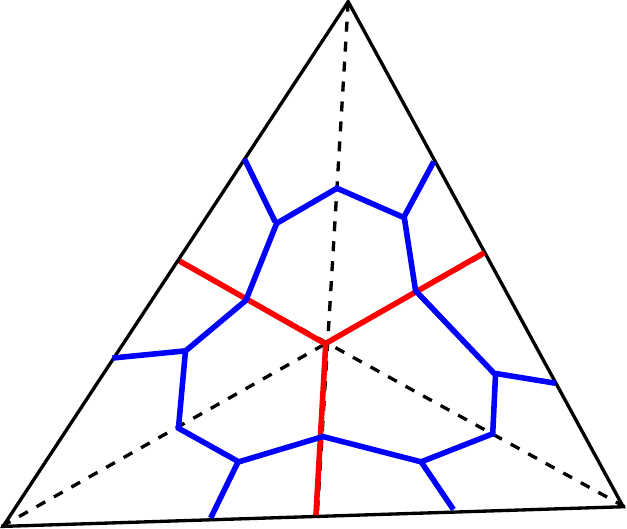}
\caption{Intersection in $\sigma'$: $ |\gamma''|$ (in red), $|\phi^k \cap \gamma'|$ (in blue).}
\label{fig:intersection} 
\end{figure}

Then the result of the wave action \eqref{eq:wave_action1} from Description 1 on $\beta_{\sigma'}\sigma'$ gives the following chain (see Fig. \ref{fig:intersection}) 
$$\sum_{\tau_0\prec\dots \prec \tau_k} (w_{\tau_0\prec\dots \prec \tau_k}\wedge\beta_{\sigma'}) \widehat{(\tau_0\prec\dots \prec \tau_k)},
$$ 
where $w_{\tau_0\prec\dots \prec \tau_k}\in W_k (\Delta_{\sigma'})$ is the polyvector associated to the simplex  $(\tau_0\prec\dots \prec \tau_k)$, and $\widehat{(\tau_0\prec\dots \prec \tau_k)}$ is its star dual in the second baricentric subdivision (cf. definition in Section \ref{subsection:wave_action}).
When intersected with $\gamma''$ only the simplices $(\tau_0\prec\dots \prec \tau_k)$ with maximal dimensional flags enter and we see that the result coincides with the definition of $\int \gamma' \cdot \gamma''$.
\end{proof}

\begin{proposition}\label{prop:product_homology}
Let $X$ be smooth. Then the intersection product $\<\ , \ \>$ on cycles descends to a pairing on homology $H_q(X;\F_p) \otimes H_{p}(X; \F_{q}) \to \R$.
\end{proposition}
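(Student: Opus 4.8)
The plan is to show that the numerical pairing $\<\ ,\ \>$ descends to homology by first producing transversal representatives and then proving that changing a representing cycle within its homology class does not alter the value, via a chain-level Leibniz rule for the intersection product. Throughout, write $\gamma'\in C_q(X;\F_p)$ and $\gamma''\in C_p(X;\F_q)$ with $p+q=n$, so the two factors are in complementary bidegree.

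First I would invoke Corollary \ref{lemma:transversal}(1)--(2): every class in $H_q(X;\F_p)$, and every pair of classes in $H_q(X;\F_p)\otimes H_p(X;\F_q)$, admits transversal (pairs of) representatives, so $\<\ ,\ \>$ is at least defined once representatives are fixed. Because $q+p=n$, for such a transversal pair the product \eqref{eq:dot_product} lands in $C_0(X;\W^{0})=C_0(X;\R)$, and $\<\gamma',\gamma''\>=\int\gamma'\cdot\gamma''$ is nothing but the augmentation (the sum of the real coefficients) of this $0$-chain, as in \eqref{eq:pairing}. It therefore suffices to prove independence of the chosen representatives. Moreover, since $\gamma'\cdot\gamma''=\pm\,\gamma''\cdot\gamma'$ (by the graded commutativity of $\wedge$ together with the reversal of the orientation entering $\Omega$), it is enough to vary the first factor only.

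So suppose $\gamma'_1,\gamma'_2$ represent the same class and both form transversal pairs with $\gamma''$. By Corollary \ref{lemma:transversal}(3) there is $b\in C_{q+1}(X;\F_p)$ with $\partial b=\gamma'_1-\gamma'_2$ forming a transversal pair with $\gamma''$. Since $(q+1)+p-n=1$, the product $b\cdot\gamma''$ is a $1$-chain in $C_1(X;\W^{0})=C_1(X;\R)$. The crux is the chain-level identity
\[
\partial(b\cdot\gamma'')=(\partial b)\cdot\gamma''\pm b\cdot(\partial\gamma'').
\]
Here $\W^0=\R$ is the constant cosheaf, so the cowave differential on $C_\bullet(X;\W^0)$ is the ordinary singular boundary and no cosheaf maps intervene at the coefficient level; the whole content of the identity is the geometry of the transversal intersection. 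Granting it and using that $\gamma''$ is a cycle, one gets $\partial(b\cdot\gamma'')=\pm(\partial b)\cdot\gamma''$, and hence
\[
\<\gamma'_1-\gamma'_2,\gamma''\>=\int(\partial b)\cdot\gamma''=\pm\,\epsilon\bigl(\partial(b\cdot\gamma'')\bigr)=0,
\]
where $\epsilon$ denotes the augmentation and we use that the augmentation of a boundary vanishes (the coefficient of $b\cdot\gamma''$ is locally constant along each $1$-simplex, so the argument is literally the classical one). Combined with the symmetry noted above, this yields the asserted descent.

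The \textbf{main obstacle} is establishing the Leibniz identity at the chain level. Concretely, $b\cdot\gamma''$ is assembled from the singular chains $\sum\tau$ chosen to represent the relative fundamental classes of the transversal intersections of simplices of $b$ with simplices of $\gamma''$; one must verify that the boundaries of these representing chains cancel against one another along the interior faces, so that $\partial(b\cdot\gamma'')$ records only the contributions coming from $\partial b$ and from $\partial\gamma''$, with the signs dictated by the orientations entering $\Omega_{\s_\tau}$ and $\beta_{\sigma'}\wedge\beta_{\sigma''}$. This is the tropical analogue of the classical fact that the intersection of a cycle with a boundary is a boundary, and the transversality built into Corollary \ref{lemma:transversal}(3) is exactly what guarantees that every intersection appearing in $\partial(b\cdot\gamma'')$ is again transversal and of the expected dimension. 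Finally, the Remark following \eqref{eq:dot_product} ensures there is no contribution supported on infinite faces, since the divisorial directions divide $\beta_{\sigma'}\wedge\beta_{\sigma''}$; this is where smoothness of $X$ enters, through Lemma \ref{transversal-E}, keeping all chains transversal and clear of the sedentary locus in the expected codimension.
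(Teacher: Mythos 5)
Your proof has the same overall skeleton as the paper's (transversal representatives via Corollary \ref{lemma:transversal}, then kill $\int(\dd b)\cdot\gamma''$ by exhibiting it as the boundary of the $1$-chain $b\cdot\gamma''$), and your handling of the infinite/sedentary part via divisibility by divisorial vectors is correct. But the step you flag as ``the main obstacle'' --- the chain-level Leibniz identity $\dd(b\cdot\gamma'')=(\dd b)\cdot\gamma''$ --- is not a technical verification that transversality takes care of; it is the entire mathematical content of the proposition, and you do not prove it. The identity holds for classical reasons only on the interiors of the maximal ($n$-dimensional) strata of $X$. The $1$-chain $b\cdot\gamma''$ also has endpoints on codimension-$1$ \emph{mobile} strata: at a point $x$ in the relative interior of a codimension-$1$ face $E$, one interval $\tau_i$ of the support of $b\cdot\gamma''$ arrives from each adjacent facet $D_i$, and $\dd(b\cdot\gamma'')$ picks up the sum $\sum_i \Omega_{D_i}(\beta_i'\wedge\beta_i'')$ there. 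Transversality (Corollary \ref{lemma:transversal}(3)) guarantees these intersections have the expected dimension, but says nothing about whether these coefficient contributions cancel; for a non-smooth $X$ they need not.

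This cancellation is precisely where smoothness enters, and not in the way you assert (``through Lemma \ref{transversal-E}, keeping all chains transversal''). The paper's argument is local algebra at $E$: smoothness means the fan at $E$ modulo the linear span of $E$ is matroidal, so $\sum_i v_i=0$ is the \emph{only} linear relation among the primitive vectors of the facets $D_i$. Combined with the cycle conditions $\sum_i\beta_i''=0$ (since $\gamma''$ is a cycle) and $\sum_i\beta_i'=0$ (since $\dd b$ has no support at $x$), this forces decompositions $\beta_i'=v_i\wedge\bar\alpha'+\alpha_i'$ and $\beta_i''=v_i\wedge\bar\alpha''+\alpha_i''$ in which $\bar\alpha'$, $\bar\alpha''$ are \emph{independent of $i$} and $\sum_i\alpha_i'=\sum_i\alpha_i''=0$; only the cross terms survive in $\beta_i'\wedge\beta_i''$, and then
\begin{equation*}
\sum_i \Omega_{D_i}\bigl[v_i\wedge(\bar\alpha'\wedge\alpha_i''+\alpha_i'\wedge\bar\alpha'')\bigr]
=\Omega_{E}\Bigl(\bar\alpha'\wedge\sum_i\alpha_i''+\sum_i\alpha_i'\wedge\bar\alpha''\Bigr)=0 .
\end{equation*}
Without this uniqueness-of-relation argument (or some substitute for it) your Leibniz identity is unproved, and the proposition does not follow; indeed the statement genuinely fails without smoothness, so no argument that uses smoothness only through transversality can be complete.
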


\begin{proof}
Suppose that we have two homologous cycles $\gamma'_1\in C_q(X;\F_p)$ and $\gamma'_2 \in C_q(X;\F_p)$. Let $b\in C_{q+1}(X;\F_p)$ be the connecting chain, i.e. $\dd b=\gamma'_1-\gamma'_2$. According to Corollary \ref{lemma:transversal} we can assume that each of the three $\gamma_1', \gamma_2', b$ forms a transversal pair with a cycle $\gamma'' \in C_p(X; \F_q)$. 

It is clear that $\dd (b \cdot \gamma'')$ coincides with the $\gamma_1' \cdot \gamma'' - \gamma_2' \cdot \gamma''$ on the interiors of the maximal strata of $X$. Thus it is enough to show that $b \cdot \gamma''$ has no boundary on codimension 1 mobile strata of $X$ (according to Lemma \ref{transversal-E} the intersection has no support on infinite simplices). This is local so we can work in a chart $\phi_\alpha: U_\alpha \to Y \subset \T^N$.

Let $E$ be a codimension 1 face of $Y$ and let  $D_1,\dots,D_k$ be the adjacent facets at $E$. We choose $v_1,\dots,v_k$, the corresponding primitive vectors such that  $\sum_{i=1}^k v_i=0$ (not just modulo the span of $E$). Let $x$ be a point in the relative interior of $E$ where $b$ intersects $\gamma''$, and let $\tau_1, \dots, \tau_k$ be the intervals in the support of $b \cdot \gamma''$ adjacent to $x$. Each $\tau_i$ lies in $D_i$. Let $\beta_i'\in \F_p(D_i)$ and  $\beta_i''\in \F_q(D_i)$ be the coefficients of the simplices of $b$ and of $\gamma''$, respectively, which intersect at the $\tau_i$.

Since $\gamma''$ is a cycle, we have $\sum_i \beta_i''=0$. We can write each 
$$\beta_i''=v_i\wedge \bar \alpha_i'' + \alpha_i'',
$$ 
where $\bar\alpha_i'' \in W_{q-1}(E)$ and $\alpha_i'' \in W_q (E)$. 

Recall that our tropical space $X$ is smooth. In particular, this means that the fan at $E$ modulo linear span of $E$ is matroidal. That is, $\sum_{i=1}^k v_i=0$ is the {\em only} linear relation among the $v_i$'s. This together with $\sum_i \beta_i''=0$  implies that 
$$\sum_{i=1}^k \alpha_i''=0 \quad \text{ and } \quad \bar\alpha_1''=\dots=\bar\alpha_k'' =: \bar\alpha''.
$$

Similarly, $\sum_i \beta_i'=0$ since $\dd b$ cannot have support at $x$. Hence we can write 
$$\beta_i'=v_i\wedge \bar \alpha' + \alpha_i',
$$ 
with $\sum \alpha_i'=0$,  $\alpha_i' \in W_p (E)$ and $\bar\alpha' \in W_{p-1}(E)$. Note that in the product 
\begin{multline*}
\beta_i'\wedge\beta_i''=(v_i\wedge \bar \alpha' + \alpha_i')\wedge (v_i\wedge \bar \alpha'' + \alpha_i'')
=v_i\wedge(\bar\alpha'\wedge \alpha_i'' + \alpha_i'\wedge \bar \alpha'')
\end{multline*}
only the cross terms survive. Now we are ready to evaluate $\dd (b\cdot \gamma'')$ at $x$:
\begin{multline*}
 \sum_i \Omega_{\Delta_i} [ v_i\wedge(\bar\alpha'\wedge \alpha_i'' + \alpha_i'\wedge \bar \alpha'') ] 
= \sum_i \Omega_{\Delta} (\bar\alpha'\wedge \alpha_i'' + \alpha_i'\wedge \bar \alpha'')  \\
=  \Omega_{\Delta} (\bar\alpha'\wedge \sum_i  \alpha_i'' +\sum_i  \alpha_i'\wedge \bar \alpha'')  = 0.
\end{multline*}
\end{proof}

Finally we restrict to the case when both $\gamma', \gamma''$ are cycles in  $C_{q}(X; \F_{p})$ with $p+q=n$. Then $\gamma'\cdot \gamma'' = \gamma'' \cdot \gamma'$. Indeed, assuming the orientation of $\tau$ is chosen, taking the product in the opposite order will result in the change of sign of the volume form $\Omega_{\s_\tau}$ according to the parity of $p$. On the other hand this parity will also affect the coefficients product: $\beta'\wedge\beta''= (-1)^p \beta''\wedge\beta'$, both effects cancel in $\Omega_{\s_\tau}(\beta'\wedge\beta'')$.
This observation combined with Propositions \ref{prop:wave_commute} and \ref{prop:product_homology} lead to the final statement.
 
\begin{theorem}\label{theorem:intersection}
Let $X$ be compact and smooth. The product on cycles \eqref{eq:pairing} descends to a symmetric bilinear form on $H_q(X; \F_p)$ for any $p+q=n$. 
\end{theorem}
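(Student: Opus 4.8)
The plan is to present the claimed form as a composition of operations each of which is already known to descend to homology, and to borrow the sign computation in the paragraph preceding the statement for symmetry. For $\gamma',\gamma''\in C_q(X;\F_p)$ with $p+q=n$ I would first pick transversal representatives, which exist by Corollary \ref{lemma:transversal}, and set $B([\gamma'],[\gamma'']):=\int\gamma'\cdot\gamma''$, the number from \eqref{eq:pairing}. The only case requiring work is $p\le q$, the range relevant to the intermediate Jacobian; when $q<p$ the intersection strata have negative dimension $2q-n<0$, so the product vanishes identically and the assertion is trivial.

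First I would prove independence of the chosen representatives. Put $k:=q-p\ge0$. Proposition \ref{prop:wave_commute} identifies $\int\gamma'\cdot\gamma''$ with the complementary-dimension pairing $\langle\phi^{k}\cap\gamma',\gamma''\rangle$, in which $\phi^{k}\cap\gamma'$ lies in $C_p(X;\F_q)$ and $\gamma''$ in $C_q(X;\F_p)$. Cap product with the fixed class $\phi^{k}$ is a homomorphism $H_q(X;\F_p\otimes\R)\to H_p(X;\F_q\otimes\R)$ by the construction in Section \ref{subsection:wave_action}, and the complementary pairing descends to homology by Proposition \ref{prop:product_homology}; this is the step that uses smoothness, through the fact that $\sum_i v_i=0$ is the only linear relation among the primitive vectors at a codimension-one face. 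Composing the two facts shows that $\langle\phi^{k}\cap\gamma',\gamma''\rangle$, and hence $B$, depends only on the classes $[\gamma']$ and $[\gamma'']$.

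Symmetry and bilinearity then require almost nothing new. Formula \eqref{eq:pairing} is manifestly linear in each of $\beta_{\sigma'}$ and $\beta_{\sigma''}$, so $B$ is bilinear on cycles and therefore on homology. For symmetry I would argue at the chain level exactly as in the paragraph before the statement: interchanging the two factors changes $\Omega_{\s_\tau}$ and the coefficient product $\beta'\wedge\beta''$ by opposite signs governed by the common degree $p$, so the two signs cancel and $\int\gamma'\cdot\gamma''=\int\gamma''\cdot\gamma'$ already as an equality of real numbers attached to the transversal cycles. Combined with the descent just established, this gives $B([\gamma'],[\gamma''])=B([\gamma''],[\gamma'])$; note that, because $\int$ is itself symmetric, symmetry is in fact independent of which factor carries the eigenwave in Proposition \ref{prop:wave_commute}.

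The genuine difficulties have all been discharged in the cited results, so what remains is bookkeeping, and the only point I would watch is consistency of degrees: one must check that the shift induced by $\phi^{k}$ places $\phi^{k}\cap\gamma'$ in precisely the group $C_p(X;\F_q)$ that is paired with $C_q(X;\F_p)$ under $\langle\ ,\ \rangle$, so that Propositions \ref{prop:wave_commute} and \ref{prop:product_homology} can be chained, and that the chain-level symmetry is a true equality of transversal cycles rather than only of classes, so that it survives descent without circularity. I expect the main obstacle to be arranging representatives that simultaneously form a transversal pair and realize the explicit wave-action cycle of Section \ref{subsection:wave_action}; Corollary \ref{lemma:transversal} together with the baricentric description of $\phi^{k}\cap$ furnishes such representatives, so no new transversality estimates are needed.
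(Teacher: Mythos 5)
Your proposal is correct and follows essentially the same route as the paper: the paper's own proof is precisely the combination of the chain-level sign cancellation (symmetry), Proposition \ref{prop:wave_commute} to rewrite $\int\gamma'\cdot\gamma''$ as $\langle\phi^{q-p}\cap\gamma',\gamma''\rangle$, and Proposition \ref{prop:product_homology} together with the homological invariance of the cap product to get descent. Your additional observations (the trivial case $q<p$, and the need for representatives compatible with both transversality and the baricentric wave action) are consistent with the paper's tacit assumptions and with the proof of Proposition \ref{prop:wave_commute}.
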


\begin{conjecture}\label{conj:non_degenerate}
This form is non-degenerate.
\end{conjecture}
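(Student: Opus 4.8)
The plan is to factor the intersection form through the eigenwave action and a Poincaré-duality pairing of complementary degree, thereby reducing the desired non-degeneracy to two structural statements that are cleanly separated. Throughout I adopt the standing convention $p\le q$ of the intermediate Jacobian setup (for $p>q$ one has $2q<n$, so \eqref{eq:pairing} vanishes for dimensional reasons and the form is identically zero), and I set $k:=q-p\ge 0$. Applying Proposition \ref{prop:wave_commute} with $p'=p''=p$ and $q'=q''=q$ rewrites the form as
\[
\langle\gamma',\gamma''\rangle=\langle\,\phi^{k}\cap\gamma',\ \gamma''\,\rangle,
\]
where the right-hand pairing couples $\phi^{k}\cap\gamma'\in H_{p}(X;\F_{q}\otimes\R)$ with $\gamma''\in H_{q}(X;\F_{p}\otimes\R)$ through the complementary intersection product \eqref{eq:pairing}. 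Consequently the adjoint of the form factors as a composite
\[
H_{q}(X;\F_{p}\otimes\R)\xrightarrow{\ \phi^{k}\ }H_{p}(X;\F_{q}\otimes\R)\xrightarrow{\ \mathrm{PD}\ }H_{q}(X;\F_{p}\otimes\R)^{*}\cong H^{q}(X;\F^{p}\otimes\R),
\]
the last identification being universal coefficients over $\R$ together with $\F^{p}=\Hom(\F_{p},\Z)$. It therefore suffices to prove that each of the two arrows is an isomorphism.

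The first arrow is an isomorphism exactly when Conjecture \ref{conj:isomorphism} holds; in the realizable case this is the content of Section \ref{section:konstruktor}, so there the first factor is under control. For the second arrow I would establish tropical Poincaré duality for smooth compact $X$, in the form of perfectness of the complementary pairing $H_{p}(X;\F_{q})\otimes H_{q}(X;\F_{p})\to\R$. The geometric input is the fundamental class $[X]\in H_{n,n}(X)$, arising from $X$ itself as a straight tropical $n$-cycle, together with the wedge pairing of coefficient systems $\F_{p}\otimes\F_{q}\to\F_{n}\cong\Z$ (recall $p+q=n$). The crucial local fact is that in a smooth chart $Y_{\alpha}=\Sigma_{M}\times\T^{s}$ this coefficient pairing is \emph{perfect}: on the matroidal factor this is Poincaré duality for the coefficient sheaves on a Bergman fan, while on the sedentary factor one must match the description of $\F_{p}$ by divisibility with respect to the divisorial vectors (cf. Lemma \ref{lem:divisible}) against that of its dual. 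Granting the local duality, a Mayer--Vietoris / \v{C}ech induction over the covering $\{U_{\alpha}\}$ globalizes it to perfectness of $\mathrm{PD}$, i.e. to the statement that the second arrow above is an isomorphism.

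The main obstacle is that both arrows rest on presently unproven input, and this is precisely why the non-degeneracy is only conjectural. The first arrow is Conjecture \ref{conj:isomorphism} itself, a tropical hard-Lefschetz statement available only in the realizable case; the non-degeneracy we seek is the exact analogue of the classical fact that hard Lefschetz \emph{together with} Poincaré duality forces a polarization to be non-degenerate, so one cannot expect it without some such Lefschetz property. That smoothness is indispensable is already visible in Example \ref{eg:nodal}, where the ranks of $H_{q}(X;\F_{p})$ and $H_{p}(X;\F_{q})$ disagree and no isomorphism $\phi^{k}$ can exist. The second arrow, tropical Poincaré duality, is the more technical half: the sedentary boundary strata obstruct a naive application of Verdier duality, and verifying perfectness of the local coefficient pairing on $\Sigma_{M}\times\T^{s}$ simultaneously in the mobile and the divisorial directions is where the genuine work lies. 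A complete proof would thus follow by combining a proof of Conjecture \ref{conj:isomorphism} in full generality with tropical Poincaré duality for smooth compact tropical manifolds; I expect the former to be the harder of the two.
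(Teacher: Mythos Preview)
The paper does not prove this statement: it is left as an open conjecture, with the authors explicitly writing earlier that ``unfortunately we are not able to show in this paper that the form is non-degenerate, though we believe that in the smooth and compact case this should be true.'' There is thus no proof in the paper to compare your proposal against.

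Your proposal is not a proof either, and you say so yourself: it is a strategy that reduces the conjecture to two other statements, the eigenwave isomorphism (Conjecture~\ref{conj:isomorphism}) and a tropical Poincar\'e duality, neither of which is established in the paper in the required generality. The factorization $\langle\gamma',\gamma''\rangle=\langle\phi^{q-p}\cap\gamma',\gamma''\rangle$ via Proposition~\ref{prop:wave_commute} is correct and is indeed the natural way to link non-degeneracy to Conjecture~\ref{conj:isomorphism}; your observation that Example~\ref{eg:nodal} shows smoothness is essential is apt. But as you acknowledge, the second arrow---perfectness of the complementary pairing $H_p(X;\F_q)\otimes H_q(X;\F_p)\to\R$---is itself a substantial claim (tropical Poincar\'e duality for smooth compact $X$) that you do not prove and that the paper does not supply. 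So the proposal is an honest and reasonable outline of what a proof would require, consistent with the paper's own assessment that the statement remains conjectural, but it is not a proof and should not be presented as one.
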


\section{Appendix: Konstruktor and the eigenwave action in the realizable case}\label{section:konstruktor}

\subsection{Tropical limit and the Steenbrink-Illusie spectral sequence}
Suppose $X$ is the tropical limit of a complex projective one-parameter degeneration $\mathcal X \to \Delta^*$. Then $X$ is naturally polyhedral. We assume also that $X$ is smooth.
In this case the refined stable reduction theorem \cite{Mumf} allows us assume the following (see details in \cite{IKMZ}).
\begin{itemize}
\item $X$ is unimodularly triangulated. This means that the finite cells are unimodular simplices and the infinite cells are products of unimodular simplices and unimodular cones spanned by the divisorial vectors.
\item The finite part of $X$ is identified with the dual Clemens complex of the degeneration with simple normal crossing central fiber $Z=\cup Z_{\alpha}$. This means that the components of $Z$ are labelled by vertices of zero sedentarity  and their intersections $Z_{\alpha_0} \cap\dots \cap Z_{\alpha_k}=:Z_\Delta$ are labelled by (finite) simplices $\Delta=\{\alpha_0\dots\alpha_k\}$ of $X$ of zero sedentarity.
\end{itemize}

\begin{theorem}\label{theorem:isomorphism}
Let $X$ be a realizable smooth projective tropical variety. Then for $q\ge p$
$$ \phi^{q-p}:  H_q(X; \F_p) \otimes\Q \to H_{p}(X; \F_{q})  \otimes\Q
$$
is an isomorphism.
\end{theorem}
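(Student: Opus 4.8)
The plan is to reduce the statement to the Hard Lefschetz property of the monodromy weight filtration of the degeneration $\mathcal X\to\Delta^*$, using the dictionary between tropical homology and the limiting mixed Hodge structure that underlies this whole section.

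First I would upgrade the rank equality $h^{p,q}(X_t)=\rk H_{p,q}(X)$ recorded in the earlier remark to a canonical isomorphism, following \cite{IKMZ}, between $H_{p,q}(X)\otimes\C$ and the corresponding weight-graded Hodge piece of the limiting mixed Hodge structure on $H^{p+q}_{\mathrm{lim}}$. This identification is exactly what the Steenbrink-Illusie spectral sequence of the semistable model provides: since the finite part of $X$ is the dual Clemens complex of the simple normal crossing central fiber $Z=\cup Z_\alpha$ and the triangulation is unimodular, the $E_1$-page of that spectral sequence is combinatorial, expressed through the strata $Z_\Delta$, and the konstruktor is precisely the resulting finite complex computing the groups $H_{p,q}(X)$.

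Second, and this is the heart of the matter, I would show that under this identification the eigenwave cap product $\phi\cap$ corresponds to the logarithm of monodromy $N$; this is the concrete content of the slogan that the eigenwave records the monodromy. Here the task is to match the explicit cellular formulas \eqref{eq:wave_action1} and \eqref{eq:wave_action2} for $\phi^k\cap$, which act by insertion along the divisorial directions followed by a passage to the star-dual cells, with the description of $N^k$ on the Steenbrink double complex, which raises the Clemens-complex degree by the corresponding Gysin and restriction maps. Because both operations have the same combinatorial shape on the konstruktor, they should agree as operators, not merely as maps with the correct source and target.

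Granting Steps one and two, $\phi^{q-p}\colon H_{p,q}(X)\to H_{q,p}(X)$ becomes, after $\otimes\,\C$, the iterate $N^{q-p}$ relating the two weight-graded pieces symmetric about the central weight $p+q$. I would then invoke the monodromy weight filtration theorem (Schmid, Steenbrink, Cattani-Kaplan-Schmid): for the nilpotent $N$ of a limiting mixed Hodge structure on $H^{m}$, the map $N^{j}\colon\operatorname{Gr}^W_{m+j}\to\operatorname{Gr}^W_{m-j}$ is an isomorphism compatible with the Hodge filtration. With $m=p+q$ and $j=q-p$ this is precisely the assertion that $\phi^{q-p}$ is an isomorphism over $\C$; since $\phi^{q-p}$ is defined over $\Q$ and the groups are finite dimensional, the complex isomorphism descends to the desired $\Q$-isomorphism. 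The main obstacle is Step two: realizing the konstruktor as a complex equipped with the operator $\phi$ that is isomorphic, operator and all, to the relevant part of the Steenbrink-Illusie spectral sequence equipped with $N$, including the matching of signs and of the star-dual insertion in \eqref{eq:wave_action1} with the Gysin maps. Once this identification of operators is secured, the remaining steps are formal consequences of classical Hodge theory.
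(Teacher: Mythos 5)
Your proposal follows essentially the same route as the paper's proof: identify the tropical homology groups with the $E_2$ terms of the Steenbrink--Illusie spectral sequence via the konstruktor (the paper's Theorem \ref{theorem:main} and Proposition \ref{prop:konstruktor}), show that the eigenwave cap product acts on konstruktor elements exactly as the monodromy logarithm $\nu$ acts on the $E_1$ terms (the paper's Proposition \ref{konstruktor_action}, which states $\phi\cap(c[-r])=c[-r-1]$, matching the Tate-twist shift), and then invoke the classical fact that $\nu^{q-p}$ induces an isomorphism between the weight-graded pieces symmetric about weight $p+q$. You also correctly single out the operator-matching step as the real technical content, which is precisely what the paper's konstruktor machinery is built to deliver.
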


In this algebraic setting the eigenwave itself is an integral class in $H^1(X;\W_1)$ (recall that $W$ carries a natural lattice). Hence in the statement we can avoid tensoring the tropical homology groups with $\R$. However its proof relies on the isomorphism in Theorem \ref{theorem:main} which we can assert only over $\Q$. Although we believe that the theorem remains true over $\Z$ its proof may be more delicate.

We will prove the theorem by comparing the eigenwave action with the classical monodromy action $T: H_k(X_t, \Q) \to H_k(X_t, \Q)$, where $X_t$ is a general fiber in $\mathcal X$. The idea that the monodromy can be represented by a cap product with certain cohomology class appeared before in the Calabi-Yau case. The second author \cite{Zh00} proved a related conjecture of Gross \cite{Gr98} that for toric hypersurfaces the monodromy can be described as the fiber-wise rotation by a natural section of the SYZ fibration. Later Gross and Siebert (\cite{GS10}, Section 5.1) explored the relation between the monodromy and the cap product in the logarithmic setting.

Notations:
\begin{itemize}
\item $\Delta$ or $\Delta'$ will always denote a finite face of $X$ of sedentarity 0, in particular, a simplex.
\item $ H_{2l}(\Delta)[-r] = H_{2l}(Z_{\Delta},\Q)$, Tate twisted by $[-r,-r]$.
\item $H_{2l}(k)[-r] = \oplus H_{2l}(\Delta)[-r]$, where $\Delta$ runs over all $k$-simplices in $X$ as above.
\end{itemize}

First  we recall the classical spectral sequence which calculates the limiting mixed Hodge structure of the family  $\mathcal X$ (see, e.g. \cite{Steen}, Chapter 11). This spectral sequence (from now on referred to as the Steenbrink-Illusie's, or SI for short) has the first term
$${E}^1_{r, k-r}=\bigoplus_{i\ge \max\{0, r\}} H_{k+r-2i}(2i-r)[r-i],
$$
and it degenerates at $E_2$ abutting to homology of the smooth fiber $X_t$ of $\mathcal Z$ with the monodromy weight filtration. 

Since all strata in $Z$ are blow ups of projective spaces,  the odd rows in Steenbrink-Illusie's $E^1$ vanish. Removing those and making shifts in the even rows we relabel the terms by 
$$\tilde{E}^1_{q,p}:={E}^1_{q-p, 2p}=\bigoplus_{i\ge \max\{0, q-p\}} H_{2q-2i}({2i+p-q})[q-p-i].
$$
The first differential $d=d'+d''$ consists of the map $d'$ induced by strata inclusion and the Gysin map $d''$: 
\begin{equation*}
\begin{split}
& d' :H_{2l}({k})[-r] \to H_{2l}({k-1})[-r] \\
& d'' : H_{2l}({k})[-r] \to H_{2l-2}({k+1})[-r-1]. 
\end{split}
\end{equation*}

For reader's convenience we write the beginning of the $\tilde{E}^1$ term:
$$ \xymatrix{
 H_0(4)[-4] &  & & &
 \\
 H_0(3)[-3] & 
 \txt{$H_0(4) [-3]$ \\ $\oplus H_2(2)[-2]$}  \ar[l]_{d} \ar[ul]_\nu & & &
  \\ 
 H_0(2)[-2] &
 \txt{$H_0(3)[-2] $ \\ $\oplus H_2(1)[-1]$}  \ar[l]_{d} \ar[ul]_\nu  &
 \txt{$H_0(4)[-2] $ \\ $\oplus H_2(2) [-1]$ \\ $\oplus H_4(0) $}  \ar[l]_{d} \ar[ul]_\nu &  & 
\\
H_0(1)[-1] & 
 \txt{$H_0(2) [-1]$ \\ $\oplus H_2(0)$}  \ar[l]_{d} \ar[ul]_\nu &
 \txt{$H_0(3) [-1]$ \\ $\oplus H_2(1)$}  \ar[l]_{d} \ar[ul]_\nu &
  \txt{$H_0(4) [-1]$ \\ $\oplus H_2(2)$}  \ar[l]_{d} \ar[ul]_\nu & 
\\
H_0(0) & H_0(1) \ar[l]_{d} \ar[ul]_\nu  & H_0(2) \ar[l]_{d} \ar[ul]_\nu  & H_0(3)  \ar[l]_{d} \ar[ul]_\nu & H_0(4) \ar[l]_{d} \ar[ul]_\nu 
 }
$$
The monodromy operator $\nu=\frac1{2\pi i} \log T$ acts along the diagonals by the Tate twist isomorphism $H_{2l}(k)[-r] \to H_{2l}(k)[-r-1]$ or by 0 if the corresponding group is missing (cf. \cite{Steen}, Chapter 11).

\subsection{Propellers}
Next we will give a combinatorial description of the SI groups and the differential in terms of  {\em propellers} - the ``local tropical cycles'' in $X$.

Some more notations: 
\begin{itemize}
\item Recall that $\Delta, \Delta', \Delta''$ always denote finite faces of $X$ of sedentarity 0.
\item
We write $\Delta\prec_k\Delta'$ or $\Delta'\succ_k\Delta$, if $\Delta$ is a face of $\Delta'$ of codimension $k$.

\item $\Link_l(\Delta)$ consists of sets $\bar q=\{q_1,\dots,q_l\}$ where each $q_i$ is either a vertex or a divisorial vector, such that the vertices of $\Delta$ together with elements of $\bar q$ span a face (infinite, in case $\bar q$ contains divisorial vectors) adjacent to $\Delta$ of dimension $l$ higher. We denote the corresponding face by $\{\Delta \bar q\}$ and 
often drop the brackets from the notation (e.g., as below) when they become cumbersome.

\item $\Link_l^0(\Delta) \subset \Link_l(\Delta)$ consists of those sets $\bar q=\{q_1,\dots,q_l\}$ where $q_i$ are allowed to be only vertices (not the divisorial vectors). In this case $\{\Delta \bar q\}$ is finite.

\item $\Vol_{\Delta\bar q}$ is the integral volume element in the (oriented) face $\{\Delta \bar q\}$.
\end{itemize}
 
Let $\Delta$ be an oriented finite cell of sedentarity 0. One can naturally identify (see \cite{IKMZ} for details) the homology groups $H_{2l}(\Delta)$ with the space of local tropical relative $l$-cycles around $\Delta$. That is, we consider formal $\Q$-linear combinations 
$$\sum_{\bar q \in \Link_l(\Delta)} \rho_{\bar q} \{\Delta {\bar q}\}
$$
 of (possibly infinite) cells $\{\Delta {\bar q}\}\succ_l\Delta$ which are balanced along $\Delta$. We call these local cycles {\em propellers} and abusing the notation we continue denoting this group by $H_{2l}(\Delta)$ (there is no Tate twist however).

Then one can identify the Gysin map $d'': H_{2l}(\Delta) \to H_{2l-2}(\Delta')$ with the restriction of the propeller to a consistently oriented finite simplex $\Delta'\succ_1 \Delta$. Put together 
\begin{equation}\label{eq:d''}
d'' ( \sum_{\bar q \in \Link_l(\Delta)} \rho_{\bar q} \{\Delta {\bar q}\}  )=  \sum_{q \in \Link_1^0(\Delta)} ( \sum_{\bar r \in \Link_{l-1} (\Delta q)} \rho_{q \bar r} \{\Delta {q \bar r}\} ).
\end{equation}

The inclusion map $d': H_{2l}(\Delta) \to H_{2l}(\Delta')$, where $\Delta'= \Delta \setminus v$ is consistently oriented facet of $\Delta$, is somewhat more tricky. Let 
$c=\sum_{\bar q \in \Link_l(\Delta)} \rho_{\bar q}\{\Delta{\bar q}\}
$ 
be an element in $H_{2l}(\Delta)$.
For any ${\bar q \in \Link_l(\Delta)}$ let $\{\Delta'{\bar q}\}=\{\Delta{\bar q}\setminus v\}$ be the corresponding cell containing $\Delta'$. 
Then the image of $ d' c$ in $H_{2l}(\Delta')$ will be 
\begin{equation}\label{eq:d'}
\sum_{\bar q \in \Link_l(\Delta)} \rho_{\bar q} \{\Delta'{\bar q}\} + \sum_{\bar r \in \Link_{l-1}(\Delta)} \rho_{v \bar r} \{\Delta{\bar r}\},
\end{equation}
where the coefficients $\rho_{v \bar r}\in \Q$ are chosen to make the result balanced along $\Delta'$. There is always a unique such choice (cf. \cite{IKMZ}), namely, the $\rho_{v \bar r}$ can be read off from the balancing condition for $c$ along $\{\Delta{\bar r}\}$:
\begin{equation}\label{eq:descent}
\sum_q \rho_{q \bar r} \overrightarrow{(\Delta' q)} + \rho_{v \bar r} \overrightarrow{(\Delta' v)} =0 \quad \mod \{\Delta' \bar r\},
\end{equation}
where $\overrightarrow{(\Delta' q)}$ means the divisorial vector $q$, or the vector from any vertex of $\Delta'$ to $q$ (well defined mod $\Delta'$) if $q$ is a vertex, and same for  $\overrightarrow{(\Delta' v)}$.

From now on we will not distinguish between the classical geometric Steenbrink-Illusie $E_1$ complex and its interpretation via complex of propellers. One of the main results in \cite{IKMZ} is the following statement.

\begin{theorem}[\cite{IKMZ}]\label{theorem:main}
$\tilde{E}^2_{q,p}\cong H_{q}(X;\F_p) \otimes \Q$.
\end{theorem}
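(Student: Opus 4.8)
The plan is to fix $p$ and read the reindexed Steenbrink--Illusie term $\tilde E^1_{\bullet,p}$ as the total complex of a double complex whose two differentials are the combinatorial inclusion map $d'$ of \eqref{eq:d'} and the Gysin map $d''$ of \eqref{eq:d''}, and then to compute $\tilde E^2_{q,p}$ (the homology of $d=d'+d''$) by running the spectral sequence of this double complex, collapsing the $d''$-direction first. Concretely, I would bigrade the propeller groups $H_{2l}(k)[-r]$ by the propeller degree $l$ and the simplex dimension $k$: then $d'$ fixes $l$ and lowers $k$, while $d''$ lowers $l$ and raises $k$, the total degree is $q=2l+k-p$, and both differentials lower $q$ by one, matching $d_1\colon \tilde E^1_{q,p}\to\tilde E^1_{q-1,p}$. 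So for each fixed $p$ we genuinely have a first-quadrant double complex and the homology of its total complex is exactly $\tilde E^2_{q,p}$.

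The engine of the argument is a local statement, the \emph{tropical Deligne resolution}: in a smooth (hence matroidal) chart, the Gysin complex of propeller groups attached to the faces running through a fixed cell is exact except in a single degree, where it computes precisely the coefficient group $\F_p$ of that cell. This is the one place where smoothness is indispensable, since the propeller groups $H_{2l}(Z_\Delta)$ are the homology of the strata $Z_\Delta$ (blow-ups of projective spaces), and their torus-orbit/propeller dictionary linearizes the Gysin maps into the combinatorial maps \eqref{eq:d''}. Exactness is of the same matroidal nature as the concentration recorded in the remark following Lemma~\ref{transversal-E}, namely that for a matroidal fan the complex $\bigoplus_{\dim\sigma=k}\F_p(\sigma)$ has homology only in top degree; I would adapt that argument to the Gysin/propeller complex.

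Granting the resolution, the $d''$-page of the spectral sequence collapses onto a single complex which I would identify, degree by degree, with the cellular tropical chain complex $C^{cell}_\bullet(X;\F_p)\otimes\Q$, the residual differential induced by $d'$ being matched with the cellular tropical boundary; this matching is bookkeeping once the descent formula \eqref{eq:d'} is compared with the cosheaf maps \eqref{cosheafmap}. One then concludes $\tilde E^2_{q,p}\cong H^{cell}_q(X;\F_p)\otimes\Q\cong H_q(X;\F_p)\otimes\Q$, the rationality being forced by the $\Q$-Hodge structures and Tate twists in the strata homology. The hard part will be the local exactness of the tropical Deligne resolution: one must show the Gysin complex has no homology off the single degree carrying $\F_p$, and that the surviving group is $\F_p$ on the nose rather than merely a group of the same rank. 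I expect to prove this by induction on the corank of the matroid, using the same projection $\tau_j\colon Y_{M_{j+1}}\to Y_{M_j}$ along a divisorial direction that drives the proof of Lemma~\ref{transversal-E}, with the uniform-matroid base case reduced to a direct computation on the standard complete fan.
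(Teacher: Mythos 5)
A preliminary remark: the paper you were given does not prove Theorem \ref{theorem:main} at all; the statement is imported wholesale from \cite{IKMZ} (cited as ``in preparation''), so there is no proof here to compare yours against, and I can only judge your proposal on its own terms. Your formal setup is fine: bigrading the groups $H_{2l}(k)[-r]$ by simplex dimension $k$ and propeller degree $l$ does give $q=2l+k-p$, and since the three components $(d')^2$, $(d'')^2$, $d'd''+d''d'$ of $d^2=0$ land in distinct bidegrees, each vanishes, so you genuinely have a bounded double complex whose total homology is $\tilde E^2_{\bullet,p}$. Moreover the local exactness statement you call the tropical Deligne resolution is indeed the right key lemma, and your deletion--induction plan for it is of the same matroidal nature as Lemma \ref{transversal-E}.

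The genuine gap is in the gluing step, where you conflate two different complexes. The $d''$-complexes of the Steenbrink double complex are \emph{global}: each propeller group $H_{2l}(\Delta')$ occurs exactly once in $H_{2l}(k)=\bigoplus_{\dim\Delta'=k}H_{2l}(\Delta')$. The local resolutions, by contrast, overlap: the propeller group of a face $\Delta'$ enters the resolution of \emph{every} subface $\Delta\prec\Delta'$ of the appropriate codimension, so the direct sum of local resolutions counts each $H_{2l}(\Delta')$ once per subface. Local exactness therefore computes the homology of the latter complex (which by design yields the cellular groups $\bigoplus_\Delta\F_p(\Delta)$), but says nothing directly about the $d''$-homology of the former. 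Concretely, take $X$ the smooth projective genus-$2$ tropical curve with two trivalent vertices joined by three unit edges (the tropical limit of a Mumford degeneration whose central fiber is two $\cp^1$'s meeting at three nodes), and $p=1$. Then $\tilde E^1_{\bullet,1}$ is the single Gysin map $H_2(0)\to H_0(1)$, a rank-one map $\Q^2\to\Q^3$; collapsing $d''$ yields $\Q$ in bidegree $(k,l)=(0,1)$ and $\Q^2$ in bidegree $(1,0)$ --- two different rows, so the page does not collapse onto a single complex, and neither group is a cellular chain group, since $C^{cell}_1(X;\F_1)\otimes\Q\cong\Q^3$ and $C^{cell}_0(X;\F_1)\otimes\Q\cong\Q^4$. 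The two complexes have equal homology ($\Q$ and $\Q^2$) --- that \emph{is} the theorem --- but your mechanism for proving it, a degree-by-degree identification under which the residual $d'$ becomes the cellular boundary, is false; without it your spectral sequence converges only tautologically to $\tilde E^2_{\bullet,p}$ and never makes contact with $H_q(X;\F_p)$. What a correct argument must supply is precisely a bridge across this multiplicity discrepancy: an auxiliary complex in which each cell $\Delta$ carries its own copy of the resolution of $\F_p(\Delta)$, assembled compatibly with both the cellular boundary maps and the maps \eqref{eq:d'}, \eqref{eq:d''}, together with a comparison of that auxiliary complex to the Steenbrink complex on one side and to $C^{cell}_\bullet(X;\F_p)\otimes\Q$ on the other. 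That assembly, not the local lemma alone, is the substantive content that the cited proof has to provide and that your proposal is missing.
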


\subsection{Konstruktor}
Now we provide another realization of the Steenbrink-Illusie's $E_1$ complex in terms of specific tropical simplicial chains. The collection of these chains which we call {\em konstruktor} forms a subcomplex of  $C_\bullet^{bar} (X, \F_\bullet)$, and we can refer to Theorem \ref{theorem:main} to see that the inclusion is a quasi-isomorphism. A wonderful feature of the konstruktor is that the eigenwave acts on its elements precisely as the monodromy operator $\nu$ acts on the terms in the Steenbrink-Illusie's $E_1$.

Let us fix the first baricentric subdivision of $X$. We elaborate a little bit on already used notation of the dual cell.
\begin{itemize}
\item For a pair $\Delta\succ \Delta'$ of finite simplices of sedentarity 0 in $X$, and $\bar q \in \Link_l(\Delta)$
we let $ \hat \Delta'_{\Delta \bar q}$ denote the dual cell to $\Delta'$ in the face $\{\Delta \bar q\}$ of $X$, that is 
the union of all simplices in the baricentric subdivision containing baricenters of both $\Delta'$ and  $\{\Delta{\bar q}\}$.
\item In the summation formulae to follow we assume the terms with $ \hat \Delta'_{\Delta \bar q}$ are not present if $\Delta'$ is not a zero sedentarity finite face of $\{\Delta \bar q\}$.
\end{itemize}

Let $\Delta$ be a finite $k$-simplex of sedentarity 0 in $X$, and $r\le k$ a non-negative integer.
To any propeller, that is a local tropical $l$-cycle 
$$c=\sum_{\bar q \in \Link_l(\Delta)} \rho_{\bar q} \{\Delta{\bar q}\} \in H_{2l}(\Delta)
$$ 
we associate a simplicial chain $c[-r] \in C^{bar}_{k+l-r}(X,\F_{l+r})$ as follows (note that $c[0]$ now has other meaning than just $c$):
$$c[-r] =\sum_{\bar q \in \Link_{l}(\Delta)}
\sum_{
\begin{subarray}{c}
\Delta'\prec \Delta\\
\dim \Delta'= r
\end{subarray} 
} 
(\rho_{\bar q}  \Vol_{\Delta'\bar q}) \hat \Delta'_{\Delta \bar q} .
$$
The orientation of $\hat \Delta'_{\Delta \bar q}$ is consistent with the original orientation of $\Delta$ and the choice of the volume element $\Vol_{\Delta'\bar q}$.  
Clearly for each $r$ between 0 and $k$ the map
$$ (\cdot) [-r]: H_{2l}(k) \to C^{bar}_{k+l-r}(X,\F_{l+r})
$$
is an injective group homomorphism. We denote its image in $C^{bar}_{k+l-r}(X,\F_{l+r})$ by $K_l(k)[-r]$. 

\begin{definition}
The {\em konstruktor} is the subgroup of $C^{bar}_{\bullet}(X,\F_{\bullet})$ generated by the $K_l(k)[-r]$ for all $k$, $l$ and $r$ . Note that $K_l(k)[-r]$ intersect trivially for different triples $k,l,r$.
\end{definition}

Next we want to show that for each $p$ the $\oplus_r K_{p-r}(\bullet -p+2r)[-r]$ is indeed a subcomplex of  $C^{bar}_\bullet(X,\F_p)$ isomorphic to the SI complex $\tilde{E}_1^{\bullet,p}$. This follows at once from comparing the SI differentials $d=d'+d''$ with the simplicial boundary $\dd$.

\begin{proposition}\label{prop:konstruktor}
$ \dd (c[-r]) = (d'c)[-r] + (d''c)[-r-1]$.
\end{proposition}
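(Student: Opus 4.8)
The plan is to compute the simplicial boundary $\dd(c[-r])$ of the explicit chain
$$c[-r] =\sum_{\bar q \in \Link_{l}(\Delta)}\sum_{\substack{\Delta'\prec \Delta\\ \dim \Delta'= r}} (\rho_{\bar q}\,\Vol_{\Delta'\bar q})\, \hat \Delta'_{\Delta \bar q}$$
by decomposing the boundary of each relative dual cell, and then to match the result term by term against the definition of $(\cdot)[-r]$ applied to the explicit formulas \eqref{eq:d'} for $d'c$ and \eqref{eq:d''} for $d''c$. The basic tool is the standard decomposition of the simplicial boundary of a relative dual cell $\hat\Delta'_{\Delta\bar q}$ in the baricentric subdivision: deleting the baricenter $\hat\Delta'$ produces \emph{inner} faces $\hat\Delta''_{\Delta\bar q}$ indexed by $\Delta'\prec_1\Delta''\preceq\{\Delta\bar q\}$, deleting $\widehat{\{\Delta\bar q\}}$ produces \emph{outer} faces $\hat\Delta'_{E'}$ indexed by facets $E'\prec_1\{\Delta\bar q\}$ with $\Delta'\preceq E'$, and all intermediate deletions cancel in consistently oriented pairs. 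The coefficient $\rho_{\bar q}\Vol_{\Delta'\bar q}$ is carried along, pushed by the cosheaf map \eqref{cosheafmap} exactly when an outer face lowers the ambient stratum.

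The first step is to isolate the two clean matches. I classify the boundary faces by the generator added (inner) or removed (outer). The \emph{outer} faces that drop a vertex $v$ of $\Delta$ have ambient cell $\{(\Delta\setminus v)\bar q\}$ and coefficient $\rho_{\bar q}\Vol_{\Delta'\bar q}$; summed over $v$ these reproduce the first, ``naive restriction'' sum $\sum_{\bar q}\rho_{\bar q}\{(\Delta\setminus v)\bar q\}$ in \eqref{eq:d'}, i.e. the leading part of $(d'c)[-r]$. The \emph{inner} faces that enlarge $\Delta'$ by a link-vertex $q\in\bar q$ have base $\{\Delta' q\}$; regrouping $\bar q=q\cup\bar r$ and using $\Vol_{\{\Delta'q\}\bar r}=\Vol_{\Delta'\bar q}$ identifies them with the Gysin restriction \eqref{eq:d''} to the cofacet $\{\Delta q\}$, hence with the ``$\Theta\ni q$'' part of $(d''c)[-(r+1)]$.

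The heart of the argument is the remaining faces. The \emph{inner} faces enlarging $\Delta'$ by a vertex of $\Delta$ (base $\Theta\preceq\Delta$, $\dim\Theta=r+1$) carry $\sum_{\Delta'\prec_1\Theta}\pm\rho_{\bar q}\Vol_{\Delta'\bar q}$; a Leibniz-type expansion of the volume polyvector $\Vol_{\Delta'\bar q}$ along the edges of $\Theta$ rewrites this as $\sum_{g\in\bar q}\pm\rho_{\bar q}\Vol_{\Theta(\bar q\setminus g)}$, whose vertex terms are precisely the ``$\Theta\preceq\Delta$'' part of $(d''c)[-(r+1)]$. The \emph{outer} faces that drop a link-element $q_i$ (ambient $\{\Delta(\bar q\setminus q_i)\}=\{\Delta\bar r\}$) must then supply the balancing-correction term $\sum_{\bar r}\rho_{v\bar r}\{\Delta\bar r\}$ of \eqref{eq:d'}. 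Since the coefficients $\rho_{v\bar r}$ are defined only implicitly, through the descent relation \eqref{eq:descent}, verifying that the leftover outer faces assemble into exactly this correction is the step where the balancing of the propeller $c$ along $\Delta$ is genuinely used; this is the main obstacle I expect.

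Finally I would dispose of the divisorial and sedentary contributions. Any inner or outer face whose base or ambient cell acquires a divisorial direction $\xi$ has coefficient divisible by $\xi$; combined with the convention placing the baricenter of an infinite cell at the baricenter of its most sedentary subface, and with Lemma \ref{lem:divisible}, such a coefficient vanishes when pushed to the relevant sedentary stratum, so the extra (divisorial) terms of the Leibniz expansion and the sedentary inner faces drop out without affecting the identity. The residual work is then sign and orientation bookkeeping across the three families — the leading $d'$ term, the $d'$-correction, and $d''$ — which I expect to be routine once the dual-cell orientation conventions fixed in Section~\ref{subsection:wave_action} are propagated; the genuinely delicate point remains the balancing-driven emergence of the $\rho_{v\bar r}$ coefficients via \eqref{eq:descent}.
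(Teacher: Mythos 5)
Your decomposition of $\dd(c[-r])$ is exactly the one the paper uses (its ``type 1'' faces are your inner faces, its ``type 2'' your outer faces), and your matching of three of the four families is the paper's: outer faces dropping a vertex of $\Delta$ give the naive restriction part of $(d'c)[-r]$; inner faces adjoining a link element give the corresponding part of $(d''c)[-r-1]$; and your ``Leibniz-type expansion'' handling inner faces adjoining a vertex of $\Delta$ is precisely the paper's simplex identity \eqref{eq:simplex}, $\sum_{\Delta'\prec_1\Delta}\Vol_{\Delta'\bar q}=\sum_{q\in\Link_1^0(\Delta)}\Vol_{\Delta\bar q\setminus q}$. Your treatment of the divisorial directions (divisibility of the coefficients kills the sedentary contributions) also agrees with the paper's.

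The genuine gap is the step you yourself flag and leave open: showing that the outer faces dropping a link element assemble into the correction term $\sum_{\bar r}\rho_{v\bar r}\{\Delta\bar r\}$ of \eqref{eq:d'}. Two points here. First, the relation \eqref{eq:descent} you invoke is only the codimension-one statement (it determines $\rho_{v\bar r}$ when $\Delta'=\Delta\setminus v$), whereas the dual cells $\hat\Delta'_{\Delta\bar r}$ appearing in $c[-r]$ have $\Delta'\prec\Delta$ of codimension $k-r$, which is arbitrary; what is actually needed is the refined descent relation, valid modulo $\{\Delta'\bar r\}$ for faces $\Delta'$ of any codimension, which the paper quotes from \cite{IKMZ}. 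Second, once one has that refined relation the step closes in one line: wedging it with $\Vol_{\Delta'\bar r}$ yields the paper's identity \eqref{eq:vol_balance},
$$\sum_{q\in\Link_1(\Delta)}\rho_{q\bar r}\Vol_{\Delta' q\bar r}=-\sum_{v\in\Delta\setminus\Delta'}\rho_{v\bar r}\Vol_{\Delta' v\bar r},$$
whose left-hand side is the total coefficient on $\hat\Delta'_{\Delta\bar r}$ contributed by your leftover outer faces, and whose right-hand side (the sign being absorbed into the orientation conventions) is exactly the coefficient this cell must carry in $(d'c)[-r]$, obtained by summing \eqref{eq:d'} over the vertices $v\in\Delta$. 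So your plan is the paper's proof in outline, but without this identity --- and without the higher-codimension form of the descent relation on which it rests --- the ``main obstacle'' you name remains unresolved, and it is precisely the content of the paper's argument.
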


\begin{proof}
For the proof we need two linear algebra identities. Let $\sigma',\sigma''$ be two opposite faces in a unimodular simplex $\sigma=\{\sigma' \sigma''\}$. Then one has
$$\sum_{\tau''\prec_1\sigma''} \Vol_{\sigma'\tau''}=\Vol_{\sigma'} \wedge \Vol_{\sigma''}=\sum_{\tau'\prec_1\sigma'} \Vol_{\tau'\sigma''},
$$ 
where, say, the left equality easily follows from the case when $\sigma'$ is a vertex. Here all $\tau'$ are oriented consistently with $\sigma'$, and all $\tau''$ with $\sigma''$. We will need this identity in the form
\begin{equation}\label{eq:simplex}
\sum_{\Delta'\prec_1\Delta} \Vol_{\Delta'\bar q}= \sum_{q\in \Link_1^0(\Delta)} \Vol_{\Delta \bar q\setminus q},
\end{equation}
where $\Delta$ is a finite simplex and $\bar q\in \Link_l(\Delta)$. Note that  the divisorial vectors (if any) in $\bar q$ just multiply both sides of the identity for finite simplices.

The second identity involves a relation among the balancing coefficients $\rho_{v \bar r}$ from  \eqref{eq:d'} for $c=\sum \rho_{\bar q} \{\Delta{\bar q}\}$. One can show (cf. \cite{IKMZ}) that they satisfy a refined version of \eqref{eq:descent}. Namely, for $\Delta'\prec \Delta \prec \{\Delta \bar q\}$ we have
$$
\sum_q \rho_{q \bar r} \overrightarrow{(\Delta' q)} +\sum_{v\in \Delta\setminus \Delta'} \rho_{v \bar r} \overrightarrow{(\Delta' v)} = 0 \quad  \mod \{\Delta' \bar r\}
$$
for faces $\Delta'\prec\Delta$ of codimension possibly higher than 1.
Multiplying the above by $\Vol_{\Delta'\bar r}$ we arrive at 
\begin{equation}\label{eq:vol_balance}
\sum_{q\in \Link_1(\Delta)} \rho_{q \bar r} \Vol_{\Delta' q \bar r} = - \sum_{v \in \Delta\setminus \Delta'} \rho_{v \bar r} \Vol_{\Delta' v \bar r}.
\end{equation}

Now we are ready to proof the proposition. Let $c=\sum \rho_{\bar q} \{\Delta{\bar q}\}$, then we can write
$$c[-r] =\sum_{
\begin{subarray}{c}
\Delta'\prec_{k-r} \Delta\\
\bar q \in \Link_{l}(\Delta)
\end{subarray} 
} 
(\rho_{\bar q} \Vol_{\Delta'\bar q}) \hat \Delta'_{\Delta{\bar q}}  .
$$
The topological boundary of each cell $\Delta'_{\Delta{\bar q}}$ consists of two types:
\begin{itemize}
\item 
Type 1: cells in the form $\Delta''_{\Delta{\bar q}}$ for faces $\Delta''\succ_1\Delta'$ of  $\{\Delta \bar q\}$. If the cell $\Delta'_{\Delta{\bar q}}$ includes divisorial directions then its coefficient $\Vol_{\Delta'\bar q}$ in $c[-r]$ is divisible by all divisorial vectors. Hence the type 1 part of the boundary $\dd (c[-r])$ is, in fact, supported on the faces $\Delta''_{\Delta{\bar q}}$ for finite $\Delta''$. Thus $\Delta''_{\Delta{\bar q}}$ in the formulae below make sense.
\item 
Type 2: cells in the form $\Delta'_{\Delta{\bar q}\setminus v}$ where $v$ is a vertex or a divisorial vector in $\{\Delta \bar q\}$ which is not in $\Delta'$.
\end{itemize}
Next we show that these two boundary types endowed with the framing correspond to the $d''$ and $d'$ differentials in the SI complex, respectively, see Figure \ref{fig:h_2(1)}.

 \begin{figure}
    \centering
    \includegraphics[width=4.5in]{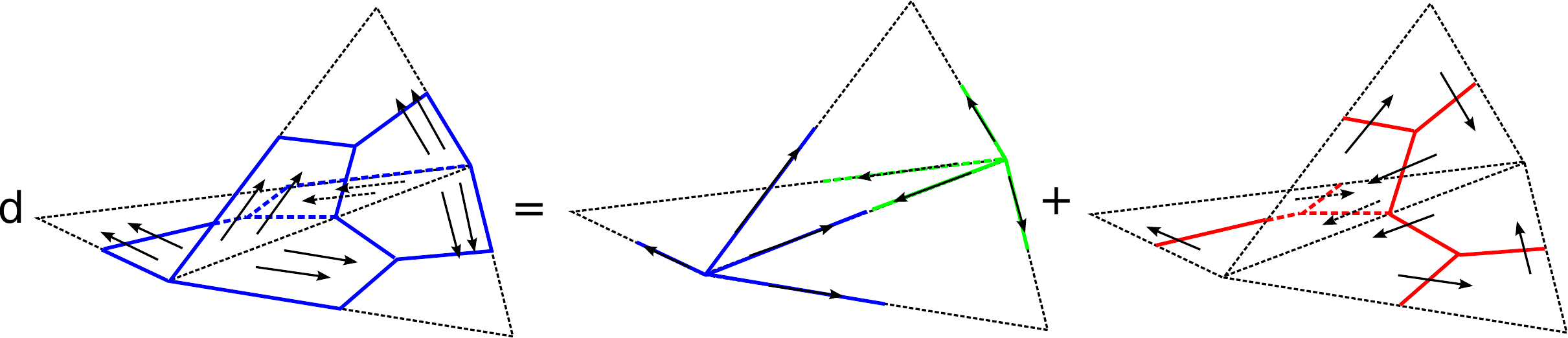}
    \caption{$d=d'+d'': H_2(1)\to H_2(0)\oplus H_0(2)[-1]$. (Framing coefficient vectors are not to scale).}
    \label{fig:h_2(1)}
  \end{figure}

Boundary of type 1: 
\begin{multline*}
\sum_{\Delta', \bar q} \ \sum_{q\in \bar q} (\rho_{\bar q} \Vol_{\Delta' \bar q})  \hat{\{\Delta' q\}}_{\Delta{\bar q}} 
+ \sum_{\bar q}\sum_{\Delta'\prec_1 \Delta''\prec \Delta}  (\rho_{\bar q} \Vol_{\Delta'\bar q}) \hat\Delta''_{\Delta{\bar q}}\\
= \sum_{q\in \Link_1^0(\Delta)}
\left( \sum_
{
\begin{subarray}{c}
\Delta''\prec \Delta q,\  \Delta''\not\prec\Delta\\
\bar r \in \Link_{l-1}(\Delta q)
\end{subarray}
} 
(\rho_{q \bar r} \Vol_{\Delta''\bar r}) \hat\Delta''_{\Delta{q \bar r}} 
+ \sum_
{
\begin{subarray}{c}
\Delta''\prec \Delta\\
\bar r \in \Link_{l-1}(\Delta q)
\end{subarray}
} 
(\rho_{q \bar r}  \Vol_{\Delta''\bar r})\right) 
\hat\Delta''_{\Delta{q \bar r}}\\
=\sum_{q \in \Link_1^0(\Delta)}  \sum_
{
\begin{subarray}{c}
\Delta''\prec \Delta q\\
\bar r \in \Link_{l-1}(\Delta q)
\end{subarray}
} 
(\rho_{q \bar r}  \Vol_{\Delta''\bar r} ) \hat\Delta''_{\Delta{q \bar r}} .
\end{multline*}
Here in the second summand we used the identity \eqref{eq:simplex} for the pair $\Delta' \prec \Delta''\bar q$. From \eqref{eq:d''} one can easily see that this coincides with $(d''c)[-r-1]$. 

Boundary of type 2:
\begin{multline*}
 \sum_{q\in \Link_1(\Delta)} \sum_{
 \begin{subarray}{c}
 \Delta'\prec \Delta\\
  \bar r \in \Link_{l-1}(\Delta)
 \end{subarray}
 }
(  \rho_{q \bar r} \Vol_{\tau q \bar r}) \hat \Delta'_{\Delta{\bar r}} 
+ \sum_{v\in\Delta }  \sum_
{
 \begin{subarray}{c}
 \Delta'\prec \Delta\setminus v\\
  \bar q \in \Link_{l}(\Delta)
 \end{subarray}
} 
(\rho_{\bar q} \Vol_{\Delta' \bar q}) \hat\Delta' _{\Delta{\bar q}\setminus v} \\
= \sum_
{
 \begin{subarray}{c}
 v\in\Delta\\
 \Delta' \prec \Delta\setminus v \\
 \end{subarray}
} 
\left( \sum_{\bar r \in \Link_{l-1}(\Delta)}
(\rho_{v \bar r} \Vol_{\tau v \bar r}) \hat \Delta'_{\Delta{\bar r}} 
+ \sum_{\bar q \in \Link_{l}(\Delta)} 
(\rho_{\bar q}  \Vol_{\tau\bar q}) \hat \Delta'_{\Delta{\bar q}\setminus v} \right).
\end{multline*}
 Here in the first summand we used the identity \eqref{eq:vol_balance} for each $\Delta', \bar r$ with the sign compensated by the orientation of $\hat\Delta'_{\Delta{\bar r}}$ and the choice of $ \Vol_{\tau v \bar r}$. Taking the sum of \eqref{eq:d'} over all vertices $v\in \Delta$ we easily identify the last expression with $(d'c)[-r]$.
\end{proof}

Combining the above proposition with Theorem \ref{theorem:main} we can conclude that the konstruktor complex can be used to calculate the tropical homology groups $H_q(X; \F_p)$:

\begin{corollary}
The inclusion of the konstruktor $\oplus_r K_{p-r}(\bullet -p+2r)[-r]$  into the complex  $C^{bar}_\bullet(X;\F_p)$ is a quasi-isomorphism for each $p$.
\end{corollary}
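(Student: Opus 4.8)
The plan is to reduce the statement to Proposition~\ref{prop:konstruktor} and Theorem~\ref{theorem:main}, treating the konstruktor first as an abstract complex and only afterwards worrying about the inclusion. Fixing $p$, I would record that the maps $c\mapsto c[-r]$ are injective group homomorphisms and that, by the index bookkeeping preceding the definition of the konstruktor, the summand $K_{p-r}(q-p+2r)[-r]$ sitting in homological degree $q$ is the image of $H_{2(p-r)}(q-p+2r)[-r]$; under the substitution $i=q-p+r$ these are exactly the summands of the relabelled Steenbrink--Illusie group $\tilde{E}^1_{q,p}$. Proposition~\ref{prop:konstruktor} states $\dd(c[-r])=(d'c)[-r]+(d''c)[-r-1]$, i.e.\ the barycentric boundary $\dd$ is transported by the maps $(\cdot)[-r]$ to the SI differential $d=d'+d''$. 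Hence the konstruktor subcomplex $\bigoplus_r K_{p-r}(\bullet-p+2r)[-r]$ is isomorphic, as a complex of $\Q$-vector spaces, to the row $\tilde{E}_1^{\bullet,p}$, and its own homology is $\tilde{E}^2_{\bullet,p}$.

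By Theorem~\ref{theorem:main} one has $\tilde{E}^2_{q,p}\cong H_q(X;\F_p)\otimes\Q$, while the homology of the ambient complex $C^{bar}_\bullet(X;\F_p)\otimes\Q$ is $H_q(X;\F_p)\otimes\Q$ by the earlier identification of barycentric simplicial tropical homology with tropical homology. So the two complexes have abstractly isomorphic homology in every degree. This, however, is \emph{not} yet the corollary: an injective chain map between complexes with the same homology ranks need not be a quasi-isomorphism, so the genuine content is that the inclusion itself induces the isomorphism. This is the main obstacle.

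To clear it I would realise both complexes inside a single spectral sequence. Equip $C^{bar}_\bullet(X;\F_p)\otimes\Q$ with the decreasing filtration by combinatorial depth (the codimension of the smallest stratum of $X$ met by a simplex, together with sedentarity) that underlies the proof of Theorem~\ref{theorem:main} in \cite{IKMZ}; its associated spectral sequence is the Steenbrink--Illusie sequence, with $E_1$-page the propeller complex. The konstruktor is then a filtered subcomplex, and on each associated graded piece the chains $c[-r]$ are precisely a set of representatives mapping isomorphically onto the corresponding $E_1$-term, so the inclusion induces an isomorphism of $E_1$-pages. Because $X$ is compact the filtration is finite and both spectral sequences converge; the comparison theorem for convergent filtered complexes then promotes the $E_1$-isomorphism to a quasi-isomorphism of total complexes, which is the claim. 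Equivalently, and more in the spirit of the paper, one can note that the isomorphism of Theorem~\ref{theorem:main} is constructed in \cite{IKMZ} on the level of classes by sending a propeller $c$ to the class $[c[-r]]$; since the konstruktor inclusion sends $c[-r]$ to itself, the map it induces on homology is that very isomorphism, and compatibility is automatic. I expect the delicate point to be verifying cleanly that the konstruktor meets the filtration in exactly the $E_1$-representatives, so that no lower-filtration correction terms are required; this is where Proposition~\ref{prop:konstruktor} together with the balancing identity \eqref{eq:vol_balance} does the real work.
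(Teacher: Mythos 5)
Your proposal is correct and, in its first paragraph, is precisely the paper's own proof: the paper disposes of this corollary in a single sentence, by combining Proposition \ref{prop:konstruktor} (which identifies the konstruktor in coefficient degree $p$, with your index bookkeeping, as a subcomplex of $C^{bar}_\bullet(X;\F_p)$ isomorphic to the row $\tilde{E}_1^{\bullet,p}$ of the Steenbrink--Illusie complex) with Theorem \ref{theorem:main} (which identifies $\tilde{E}^2_{q,p}$ with $H_q(X;\F_p)\otimes\Q$). The subtlety you then flag is genuine, and the paper does gloss over it: abstractly isomorphic homology does not make an injective chain map a quasi-isomorphism, so one must know that the map induced by the inclusion is the isomorphism of Theorem \ref{theorem:main}. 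Of your two remedies, the second --- that the isomorphism of Theorem \ref{theorem:main} is realized in \cite{IKMZ} on the level of classes by $c\mapsto [c[-r]]$, so that compatibility with the inclusion is automatic --- is what the paper implicitly relies on; its phrase ``we can refer to Theorem \ref{theorem:main} to see that the inclusion is a quasi-isomorphism'' only makes sense under this reading. Your first remedy (a filtration by combinatorial depth whose spectral sequence has the propeller complex as $E_1$-page, plus the comparison theorem) is a plausible reconstruction of how that compatibility would be established, but it rests on structural claims about \cite{IKMZ} --- cited as ``in preparation'' --- that cannot be checked against anything in this paper, so it should be regarded as extra scaffolding on the same route rather than a different argument. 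In short: same approach as the paper, executed with more care than the paper itself takes.
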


Finally, since all infinite cells in the konstruktor chains have coefficients divisible by the divisorial directions we can use the explicit description \eqref{eq:wave_action} of the eigenwave action on it. Then unveiling the konstruktor definition we arrive at the following.
\begin{proposition}\label{konstruktor_action}
For any $c\in H_{2l}(\Delta)$ one has $\phi \cap (c[-r])=c[-r-1]$.
\end{proposition}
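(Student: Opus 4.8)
The plan is to compute $\phi\cap(c[-r])$ directly from the singular cap-product formula \eqref{eq:cap_product}, applied to each dual cell occurring in $c[-r]$, and then to match the outcome term by term with $c[-r-1]$. The starting observation is that every summand $\rho_{\bar q}\Vol_{\Delta'\bar q}\,\hat\Delta'_{\Delta\bar q}$ of $c[-r]$ whose cell $\{\Delta\bar q\}$ is infinite has coefficient divisible by all divisorial vectors in $\bar q$, since these vectors already occur in the volume element $\Vol_{\Delta'\bar q}$. As noted in the definition of the eigenwave, this divisibility is exactly what makes the action of $\phi$ (a priori defined only modulo $B_{div}^1(X;\W_1)$) well defined on the konstruktor, because the divisorial part of any wave vector wedges to zero against such a coefficient.

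First I would fix the Description~1 vertex ordering on each top simplex of $\hat\Delta'_{\Delta\bar q}$: such a simplex is a flag $\Delta'=F_0\prec F_1\prec\dots\prec F_{k+l-r}=\{\Delta\bar q\}$ with $\dim F_{i+1}=\dim F_i+1$, labelled so that $F_i$ carries index $i$; hence the first edge $\sigma_{01}$ runs from the baricenter of $\Delta'$ to that of $F_1$. The cap product then reads $\phi\cap(\rho_{\bar q}\Vol_{\Delta'\bar q}\,\sigma)=(w_{\sigma_{01}}\wedge\rho_{\bar q}\Vol_{\Delta'\bar q})\,\sigma_{1\dots k+l-r}$, where $w_{\sigma_{01}}=bar(F_1)-bar(\Delta')\in W(\sigma)$ as in \eqref{eq:phi^k} (both baricenters are mobile, so no divisorial correction is needed). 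Writing $F_1=\{\Delta' w\}$ I would split into cases by the nature of the new element $w$: if $w$ is a vertex or a divisorial vector of $\bar q$, then $w_{\sigma_{01}}$ lies in the linear span of $\{\Delta'\bar q\}$ and the wedge with $\Vol_{\Delta'\bar q}$ vanishes. Consequently only the terms with $w\in\Delta\setminus\Delta'$, i.e. $F_1=\Delta''$ an $(r+1)$-face of $\Delta$, survive.

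For a surviving term I would use the elementary baricenter identity $bar(\Delta'')-bar(\Delta')=\tfrac{1}{r+2}\,(w-bar(\Delta'))$, so that modulo the span of $\{\Delta'\bar q\}$ one gets $w_{\sigma_{01}}\wedge\Vol_{\Delta'\bar q}=\tfrac{1}{r+2}\,\overrightarrow{(\Delta' w)}\wedge\Vol_{\Delta'\bar q}=\pm\tfrac{1}{r+2}\,\Vol_{\Delta''\bar q}$. Fixing $\Delta''$ and summing the faces $\sigma_{1\dots k+l-r}$ over all flags running through it reassembles the dual cell $\hat\Delta''_{\Delta\bar q}$, consistently with \eqref{eq:wave_action1}. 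The numerical heart is then that an $(r+1)$-simplex $\Delta''$ has exactly $r+2$ facets $\Delta'\prec_1\Delta''$, each an $r$-face of $\Delta$ and each contributing $\pm\tfrac{1}{r+2}\rho_{\bar q}\Vol_{\Delta''\bar q}$ to $\hat\Delta''_{\Delta\bar q}$; summing over these facets the factors $\tfrac1{r+2}$ combine with the count $r+2$ to produce exactly $\rho_{\bar q}\Vol_{\Delta''\bar q}$, which is the $\Delta''$-term of $c[-r-1]$. Summing over $\bar q$ and over the $(r+1)$-faces $\Delta''$ of $\Delta$ yields $\phi\cap(c[-r])=c[-r-1]$.

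The hard part will be the sign bookkeeping: one must verify that all $r+2$ facet contributions to a given $\hat\Delta''_{\Delta\bar q}$ carry the \emph{same} sign, so that they add rather than cancel. This means tracking three sources of signs — the Koszul sign in $\overrightarrow{(\Delta' w)}\wedge\Vol_{\Delta'\bar q}=\pm\Vol_{\Delta''\bar q}$, the sign from splitting $\sigma=\sigma_{01}\cup\sigma_{1\dots k+l-r}$ in the cap product, and the orientation convention fixing $\hat\Delta'_{\Delta\bar q}$ and $\hat\Delta''_{\Delta\bar q}$ (consistency with the orientation of $\Delta$ and with the chosen volume elements) — and checking that their product is independent of which facet $\Delta'$ of $\Delta''$ is removed. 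With the orientation conventions built into the definition of $c[-r]$ these products are all $+$, so that $\sum_{\Delta'\prec_1\Delta''}\tfrac1{r+2}=1$ cleanly; this is a routine but delicate verification entirely analogous to the volume identity \eqref{eq:simplex} already exploited in the proof of Proposition \ref{prop:konstruktor}.
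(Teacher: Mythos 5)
Your proposal is correct and takes essentially the same approach as the paper: the paper's proof of this proposition is the single remark that, since konstruktor coefficients are divisible by the divisorial vectors, one may apply the chain-level Description 1 formula \eqref{eq:wave_action1} for the eigenwave action and then ``unveil'' the definition of $c[-r]$, which is precisely the computation you carry out in detail (the vanishing of the terms with $F_1=\{\Delta' w\}$ for $w\in\bar q$, the baricenter identity giving the factor $\tfrac{1}{r+2}$, and the count of the $r+2$ facets $\Delta'\prec_1\Delta''$ whose contributions add by the orientation conventions built into $\hat\Delta'_{\Delta\bar q}$). Your argument in fact supplies more detail than the paper does; the only slight imprecision is the parenthetical claim that both baricenters are always mobile (the baricenter of an infinite $F_1$ is sedentary), but this is harmless since those terms are exactly the ones killed by the wedge with $\Vol_{\Delta'\bar q}$.
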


Now we can combine all above observations to prove the claimed isomorphism 
$$\phi^{q-p}: H_q(X; \F_p) \to H_{p}(X; \F_{q}).$$

\begin{proof}[Proof of Theorem \ref{theorem:isomorphism}]
The cap product action of the eigenwave $\phi^{q-p}$ on the homology $H_q(X, \F_p)$ can be induced from its action on the konstruktor, which is a simplicial chain subcomplex. But it agrees there with the classical action of the monodromy $\nu^{q-p}$ on the $E_1$ term of the SI spectral sequence. On the other hand it is well known that the $\nu^{q-p}$ induces an isomorphism on the associated graded pieces with respect to the monodromy weight filtration on $H_{p+q}(X_t)$, which are calculated on the $E_2$ term of the SI spectral sequence. 
\end{proof}

\begin{acknowledgement}
%
We are grateful to Ilia Itenberg for numerous useful discussions. We also wish to thank the referee for pointing out several mistakes and suggesting many exposition improvements. Finally we would like to thank the Max-Planck-Institut-f\"ur-Mathematik for its hospitality
during the special program ``Tropical Geometry and Topology".
\end{acknowledgement}

\end{document}